\documentclass{article}

\usepackage{microtype}
\usepackage{graphicx}
\usepackage{subcaption}
\usepackage{hyperref}
\usepackage[accepted]{icml2026}

\usepackage{amsmath}
\usepackage{amssymb}
\usepackage{mathtools}
\usepackage{amsthm}
\usepackage{csquotes}
\usepackage{etoolbox}

\usepackage[nameinlink,capitalize,noabbrev]{cleveref}

\usepackage{aliascnt}

\theoremstyle{plain}
\newtheorem{theorem}{Theorem}[section]
\crefname{theorem}{Theorem}{Theorems}
\Crefname{theorem}{Theorem}{Theorems}

\newaliascnt{proposition}{theorem}
\newtheorem{proposition}[proposition]{Proposition}
\aliascntresetthe{proposition}
\crefname{proposition}{Proposition}{Propositions}
\Crefname{proposition}{Proposition}{Propositions}

\newaliascnt{lemma}{theorem}
\newtheorem{lemma}[lemma]{Lemma}
\aliascntresetthe{lemma}
\crefname{lemma}{Lemma}{Lemmas}
\Crefname{lemma}{Lemma}{Lemmas}

\newaliascnt{corollary}{theorem}

\aliascntresetthe{corollary}
\crefname{corollary}{Corollary}{Corollaries}
\Crefname{corollary}{Corollary}{Corollaries}

\theoremstyle{definition}
\newaliascnt{definition}{theorem}
\newtheorem{definition}[definition]{Definition}
\aliascntresetthe{definition}
\crefname{definition}{Definition}{Definitions}
\Crefname{definition}{Definition}{Definitions}

\newaliascnt{assumption}{theorem}
\newtheorem{assumption}[assumption]{Assumption}
\aliascntresetthe{assumption}
\crefname{assumption}{Assumption}{Assumptions}
\Crefname{assumption}{Assumption}{Assumptions}

\theoremstyle{remark}
\newaliascnt{remark}{theorem}
\newtheorem{remark}[remark]{Remark}
\aliascntresetthe{remark}
\crefname{remark}{Remark}{Remarks}
\Crefname{remark}{Remark}{Remarks}
\crefname{equation}{}{}
\crefname{appendix}{Appendix}{Appendices}
\Crefname{appendix}{Appendix}{Appendices}

\pretocmd{\appendix}{%
  \crefalias{section}{appendix}%
  \crefalias{subsection}{appendix}%
  \crefalias{subsubsection}{appendix}%
}{}{}

\usepackage{lipsum}
\usepackage{graphicx}
\graphicspath{ {images_paper} }
\usepackage{epstopdf}
\usepackage{algorithmic}
\usepackage{mathtools}
\usepackage{bbm}
\usepackage{booktabs}
\usepackage{multirow}
\usepackage{makecell}
\usepackage{setspace}
\usepackage[export]{adjustbox}
\usepackage{cases}
\usepackage[createShortEnv]{proof-at-the-end}
\usepackage[shortlabels]{enumitem}
\usepackage{siunitx}
\usepackage{glossaries-extra}
\ifpdf
  \DeclareGraphicsExtensions{.eps,.pdf,.png,.jpg}
\else
  \DeclareGraphicsExtensions{.eps}
\fi

\usepackage{subcaption}
\usepackage{tikz}
\usetikzlibrary{intersections}
\usetikzlibrary{angles, quotes}
\usepackage{pgfplots}
\newcommand{\R}{\mathbb{R}}
\newcommand{\1}{\mathbbm{1}}
\newcommand{\N}{\mathbb{N}}

\newcommand{\Oc}{\mathcal{O}}

\renewcommand{\d}{\mathrm{d}}
\newcommand{\Lc}{\mathcal{L}}
\newcommand{\Nc}{\mathcal{N}}

\newcommand{\Pc}{\mathcal{P}}

\newcommand{\ito}{It\^{o}}

\renewcommand{\div}{\mathrm{div}}

\newcommand{\expo}[1]{\exp\left(#1\right)}

\newcommand{\dd}{\mathrm{d}}

\newcommand{\Clsi}{C_\mathrm{LSI}}

\DeclareMathOperator*{\supp}{supp}

\newcommand{\id}{I_d}

\DeclareMathOperator*{\KL}{KL}

\DeclareMathOperator{\E}{\mathbb{E}}

\DeclareMathOperator{\law}{law}

\newcommand{\ie}{\textit{i.e.}}
\newcommand{\cf}{\textit{cf.}}
\newcommand{\eg}{\textit{e.g.}}

\renewcommand{\phi}{\varphi}
\renewcommand{\epsilon}{\varepsilon}

\newcommand{\lip}{L}
\newcommand{\dissi}{a}
\newcommand{\dissii}{b}
\newcommand{\dissiii}{R}

\newabbreviation{lsi}{LSI}{log-Sobolev inequality}
\newabbreviation{sde}{SDE}{stochastic differential equation}
\newabbreviation{pde}{PDE}{partial differential equation}
\newabbreviation{fpe}{FPE}{Fokker--Planck equation}
\newabbreviation{mc}{MC}{Markov chain}
\newabbreviation{mcmc}{MCMC}{Markov chain Monte Carlo}
\newabbreviation{ula}{ULA}{unadjusted Langevin algorithm}
\newabbreviation{daz}{DAZ}{diffusion at absolute zero}
\newabbreviation{smc}{SMC}{sequential Monte-Carlo}
\newabbreviation{gmm}{GMM}{Gaussian mixture model}

\usepackage{enumitem}
\setlist[enumerate]{leftmargin=.5in}
\setlist[itemize]{leftmargin=.5in}

\newcommand{\step}{h}

\definecolor{red}{RGB}{228, 26, 28}
\definecolor{blue}{RGB}{55, 126, 184}
\definecolor{green}{RGB}{77, 175, 74}
\definecolor{orange}{RGB}{255, 127, 0}
\definecolor{purple}{RGB}{152, 78, 163}
\definecolor{brown}{RGB}{166, 86, 40}
\definecolor{pink}{RGB}{247, 129, 191}
\pgfplotscreateplotcyclelist{plotcycle}{
    {red},
    {blue},
    {green},
    {orange},
    {purple},
    {brown},
    {pink}
}
\usetikzlibrary{external}
\tikzset{external/force remake=false}
\AddToHook{env/algorithmic/begin}{\tikzexternaldisable}
\usetikzlibrary{spy}
\icmltitlerunning{Forward-KL Convergence of Time-Inhomogeneous Langevin Diffusions}
\glsdisablehyper

\usepackage{pgfplots}
\pgfplotsset{compat=1.18}
\usepgfplotslibrary{groupplots} 
\tikzset{font=\small}
\begin{document}

\twocolumn[
  \icmltitle{Forward-KL Convergence of Time-Inhomogeneous Langevin Diffusions}

  \icmlsetsymbol{equal}{*}

  \begin{icmlauthorlist}
    \icmlauthor{Andreas Habring}{Aut}
    \icmlauthor{Martin Zach}{Che}
  \end{icmlauthorlist}

  \icmlcorrespondingauthor{Andreas Habring}{andreas.habring@tugraz.at}

  \icmlaffiliation{Aut}{Institute of Visual Computing, Graz University of Technology, Graz, Austria}
  \icmlaffiliation{Che}{Biomedical Imaging Group, École Polytechnique Fédérale de Lausanne, Lausanne, Switzerland and CIBM Center for Biomedical Imaging, École Polytechnique Fédérale de Lausanne, Lausanne, Switzerland}

  \icmlkeywords{Machine Learning, ICML}

  \vskip 0.3in
]



\printAffiliationsAndNotice{}  

\begin{abstract}
    Many practical samplers rely on time-dependent drifts---often induced by annealing or tempering schedules---to improve exploration and stability.
    This motivates a unified non-asymptotic analysis of the corresponding Langevin diffusions and their discretizations.
    We provide a convergence analysis that includes non-asymptotic bounds for the continuous-time diffusion and its Euler--Maruyama discretization in the forward-Kullback--Leibler divergence under a single set of abstract conditions on the time-dependent drift.
    The results apply to many practically-relevant annealing schemes, including geometric tempering and annealed Langevin sampling.
    In addition, we provide numerical experiments comparing the annealing schemes covered by our theory in low- and high-dimensional settings.
\end{abstract}

\section{Introduction}
We consider the problem of sampling from a Gibbs probability distribution $\pi$ on $\R^d$ which admits a density (with respect to the Lebesgue measure \( \lambda \)) of the form 
\begin{equation}\label{eq: Gibbs distribution}
    p(x)\coloneqq \frac{\dd \pi}{\dd \lambda}(x) = \frac{1}{Z}\exp(-U(x))
\end{equation}
for some \emph{potential} $U:\R^d\rightarrow \R$, where $Z = \int \exp(-U(x))\dd x$ is the \emph{partition function}.
Sampling from such distributions is an omnipresent task within modern machine learning~\cite{LeCun2007,welling2011sgld}, Bayesian inverse problems~\cite{Stuart_2010,Dashti2017}, generative modeling~\cite{song2019generative,nijkamp}, and related fields.

In most relevant settings in high dimensions, it is typically infeasible to generate independent exact samples from $\pi$ (\eg, via inversion or rejection sampling), and instead methods based on \glspl{mc} have become popular~\citep{durmus2017nonasymptotic,falk2025inertial,zach2022stable,chehab2025provable,habring2025energy,narnhofer2022posterior,vempala2019rapid}.
Such approaches rely on an ergodic \gls{mc} $(X_k)_k$ which admits $\pi$ (or, sometimes, an approximation thereof) as its stationary distribution so that sampling may be performed by simulating the chain for a sufficiently large number of iterations.
A subclass of those \gls{mc}-based methods is derived from discretizations of \glspl{sde}.
Among these \glspl{sde}, the \emph{Langevin diffusion}
\begin{equation}
    \label{eq:langevin diffusion}
    \dd X_t = -\nabla U(X_t)\dd t + \sqrt{2} \dd W_t
\end{equation}
for \( t > 0 \), where $(W_t)_t$ denotes Brownian motion, is particularly popular.
Indeed, under appropriate assumptions on $U$ (\eg, strong convexity and Lipschitz-continuity of the gradient), \cref{eq:langevin diffusion} admits a unique strong solution for all time whose distribution converges to $\pi$ as $t\rightarrow\infty$~\citep{dalalyan2017theoretical,roberts1996exponential,durmus2019high}.
While more elaborate methods exist \citep{pereyra2016proximal,pereyra2020accelerating}, \cref{eq:langevin diffusion} is typically discretized with an Euler--Maruyama scheme, which results in the popular \emph{\gls{ula}}
\begin{equation}
    X_{k+1} = X_k - h \nabla U(X_k) + \sqrt{2h} Z_k
    \label{eq:ula}
\end{equation}
for \( k = 0, 1, \dotsc, \) where \( Z_k\sim\Nc(0,\id) \).
Though the \gls{mc} $(X_k)_k$ is ergodic, its stationary distribution $\pi_h$ depends on the stepsize \( h > 0 \), is in general not identical to $\pi$, and only converges to $\pi$ as $h \rightarrow 0$~\citep{dalalyan2017theoretical,durmus2017nonasymptotic}. 

In practice, \gls{ula} suffers from several drawbacks.
In particular, distributions of interest in the context of modern machine-learning applications are typically high-dimensional, multimodal, and nonsmooth.
In such cases, \gls{ula} exhibits poor convergence; samples frequently get trapped in a single mode and excessively many iterations are necessary to cover multiple modes of $\pi$.
Approaches that build on the \emph{underdamped} Langevin diffusion~\citep{falk2025inertial,cheng2018underdamped} aim to accelerate the diffusion process and its discretization.
In practice, however, methods based on \emph{annealing} or \emph{tempering}, such as annealed Langevin sampling~\citep{song2019generative} or geometric tempering~\citep{chehab2025provable}, often perform better.
Such approaches make use of a \emph{path} of distributions $(\pi_{\tau})_{\tau\geq 0}$ that is deliberately constructed such that \( \pi_0 \) coincides with \( \pi \), the map $\tau \mapsto\pi_{\tau}$ is continuous (in some appropriate sense), and, ideally, it is easier to sample $\pi_{\tau}$ the larger $\tau$ is.
We discuss the most popular ways to construct $(\pi_{\tau})_{\tau\geq 0}$ and how they map into our framework in~\cref{ssec:paths}.
Sampling using $(\pi_\tau)_\tau$ is performed by successively guiding the samples along the path $\pi_{\tau}$ as $\tau\rightarrow 0$.
In order to do so, we consider the \emph{time-inhomogeneous} Langevin diffusion
\begin{equation}\label{eq:time-inhomogeneous Langevin diffusion}
    \dd X_t = -\nabla U_{\tau(t)}(X_t)\dd t + \sqrt{2} \dd W_t,
\end{equation}
where \( U_{\tau(t)} = -\log p_{\tau(t)} + c \) and $p_{\tau(t)} = \frac{\dd \pi_{\tau(t)}}{\dd \lambda}$.\footnote{The potential $U_{\tau(t)}$ needs to be specified only up to an additive constant as it only enters via its gradient.}
In other words, we consider the Langevin diffusion for the \emph{moving target $\pi_{\tau(t)}$}.
The function $\tau:\R_{\geq0}\to\R_{\geq0}$ satisfies $\tau(t)\rightarrow 0$ as $t\rightarrow\infty$ and is an \emph{annealing schedule} which allows us to control how quickly the target changes and, in particular, how quickly it moves from an easy-to-sample distribution to the actual target $\pi$.
We denote the law of \( X_t \) that satisfies~\cref{eq:time-inhomogeneous Langevin diffusion} as \( \mu_t \).

Despite their empirical success, the theoretical analysis of the time-inhomogeneous Langevin diffusion remains fragmented.
Existing guarantees typically specialize to a particular path (\eg, geometric tempering~\citep{guo2025provable,chehab2025provable} or convolutional paths~\citep{cordero2025non}) and require smoothness assumptions that implicitly assume strong solutions of the Fokker--Planck equation.
In addition, contrary to the time-homogeneous Langevin dynamics, most existing guarantees in the time-inhomogeneous setting control the backward-Kullback--Leibler divergence $\KL(\pi\,|\,\mu_t)$ instead of the forward-Kullback--Leibler divergence $\KL(\mu_t\,|\,\pi)$~\cite{guo2025provable,cordero2025non,cattiaux2025diffusion,young2026diffusion}.
However, the latter is the natural notion of convergence for the Langevin diffusion since it is the Lyapunov functional of the Fokker--Planck equation and directly controls entropy dissipation.
As a result, practitioners lack a single theorem that (a) certifies forward-KL convergence for broad families of annealing paths and (b) exposes how schedule choice trades off mixing and tracking error.
We provide such a theorem for~\cref{eq:time-inhomogeneous Langevin diffusion} and its Euler--Maruyama discretization in this article.

\paragraph{Contributions}
We provide the following contributions:
\begin{enumerate}[nosep,leftmargin=0.7cm]
    \item We show that the time-inhomogeneous Langevin diffusion~\cref{eq:time-inhomogeneous Langevin diffusion} and its Euler--Maruyama discretization converge to the target $\pi$ in the forward-Kullback--Leibler divergence under abstract assumptions on the path $(\pi_\tau)_\tau$ (\cref{cor:final convergence continuous,corr discrete convergence}).
    Contrary to existing analyses, we carefully address potential nondifferentiabilities (due to only weak solutions of the Fokker--Planck equation in general) and provide results about bias-free convergence of the schemes for vanishing step sizes (square-summable but not summable).
    \item We show under which assumptions popular paths are covered by our analysis.
        In particular, we consider geometric tempering, dilation, paths based on the Moreau envelope of the potential, and convolutional paths used in diffusion models (\cref{prop:geom_tempering_ass,prop:dilation_ass,prop:daz_ass,prop:convolution_ass}).
        We also show that paths of the posteriors of Bayesian Gaussian-linear inverse problems inherit the guarantees from the paths of the prior (\cref{prop:posterior_ass}).
    \item We derive practical guidelines for designing rapidly-converging paths $(p_\tau)_\tau$ from the convergence results (\cref{rmk:interpretation of results}).
    \item We compare annealing schemes numerically on multimodal densities.
        The results confirm our theoretical analysis and provide insights into use cases and benefits of the considered time-inhomogeneous sampling schemes.
\end{enumerate}

\paragraph{Organization of the Article}
The remainder of the article is organized as follows.
In \cref{sec:related}, we discuss related work.
In \cref{sec:preliminaries}, we introduce some notation and relevant preliminary results and present the necessary assumptions on the path $(\pi_\tau)_\tau$.
\Cref{sec:analysis} is devoted to our main contributions:
We prove convergence of the continuous-time diffusion~\cref{eq:time-inhomogeneous Langevin diffusion} to the target $\pi$ in~\cref{ssec:continuous analysis}, prove convergence of the corresponding Euler--Maruyama discretization in~\cref{sec:discrete analysis}, and cast various popular paths into our framework in~\cref{ssec:paths}.
Finally, in \cref{sec:numerical}, we provide numerical results that compare these different annealing families empirically and conclude the article with future research directions in~\cref{sec:conclusion}.

\section{Related Work}\label{sec:related}
\subsection{Time-Inhomogeneous Langevin Diffusion}
The convergence of the time-homogeneous Langevin diffusion~\cref{eq:langevin diffusion}, its Euler--Maruyama discretization~\cref{eq:ula}, and particular extensions have been studied extensively in recent years and convergence results under various assumptions on the potential can be found in~\citep{roberts1996exponential,durmus2017nonasymptotic,durmus2019analysis,habring2023subgradient,dalalyan2017theoretical}.
The theoretical analysis of the time-inhomogeneous Langevin diffusion~\cref{eq:time-inhomogeneous Langevin diffusion} is more recent and, so far, all works are specialized for particular paths.
\citet{cordero2025non} analyzed \emph{convolutional paths} (defined in~\cref{sssec:convolutional path}), which are closely related to diffusion models that were introduced by~\citet{song2021scorebased}.
They proved convergence of the corresponding sampling scheme by bounding the backward-Kullback--Leibler divergence between target and sample distribution using the Wasserstein metric derivative along the path of distribution.
\citet{guo2025provable} and \citet{chehab2025provable} considered the \emph{geometric-tempering path} (defined in~\cref{sssec:geometric tempering}).
Like the analysis by~\citet{cordero2025non}, the analysis by \citet{guo2025provable} is with respect to the backward-Kullback--Leibler divergence and builds on bounding the Wasserstein metric derivative.
The analysis by~\citet{chehab2025provable} is based on the descent of the forward-Kullback--Leibler divergence along the Fokker--Planck equation and is closer to the one in this manuscript. However, the present manuscript provides several additional contributions as mentioned above, most notably the consideration of potential non-smoothness arising from the weak Fokker--Planck equation and the bias-free convergence for vanishing step sizes. Moreover, all of the works mentioned above focus on a specific annealing scheme, whereas we provide a unifying analysis for annealing schemes based on abstract assumptions on the path $(\pi_\tau)_\tau$.

\subsection{Related Schemes}
A (discrete-time) sequence $(\pi_k)_{k=0}^K$ of distributions with $\pi_0=\pi$ is also the basis of \gls{smc} sampling~\citep{dai2022invitation,lee2024convergence,mathews2024finite}.
The iteration rule in \gls{smc} alternates between proposal steps of drawing from a transition kernel with importance sampling and re-sampling steps.
\Gls{smc} places no restrictions on the sequence $(\pi_k)_{k=0}^K$ and the geometric path, which is also frequently used in time-inhomogeneous Langevin sampling, is a popular choice~\citep{dai2022invitation}.
Convergence results under the assumption of a uniform bound on the ratios $\pi_k/\pi_{k-1}$ can be found in~\citep{lee2024convergence,mathews2024finite}.

The time-inhomogeneous Langevin diffusion is also closely related to diffusion models~\citep{song2021scorebased}.
Consider the \gls{sde}
\begin{equation}
    \dd X_{t} = f(X_t,t) \dd t + g(X_t,t)\dd W_t,
    \label{eq:diffusion sde}
\end{equation}
where \( X_0 \sim \pi \).
Under suitable choices for \( f \) and \( g \), the forward process admits a limiting marginal \( X_{\infty} \) as \( t \to \infty \), and one could sample from \( \pi_0 \) by simulating the \gls{sde} \cref{eq:diffusion sde} in reverse with initial condition \( X_\infty \):
By Anderson's theorem~\citep{anderson1982reverse}, the reverse \gls{sde} that reproduces the forward marginals satisfies
\begin{equation}
    \dd X_t = \bigl(f(X_t,t) - g^2(t)\nabla\log p_{t}(X_t)\bigr)\dd t + g(t)\dd W_t,
	\label{eq:BW_diff}
\end{equation}
where $p_{t}$ denotes the density of $X_t$ defined by the forward process, and $\dd t$ is negative.
Alternatively, one could sample from \( \pi_0 \) through the time-inhomogeneous Langevin diffusion~\cref{eq:time-inhomogeneous Langevin diffusion}, where the path traces out the density of $X_t$ defined by the forward process in reverse, and it is a natural question whether one should be preferred in certain situations.
Indeed, both variants have been considered:
Methods that rely on discretizations of~\cref{eq:BW_diff} are typically called \enquote{diffusion models}, whereas the time-inhomogeneous Langevin diffusion on the forward-process densities is called \enquote{annealed Langevin sampling}~\cite{song2019generative}.
Hybrid variants have also been considered:
The predictor--corrector algorithm due to~\citet[Algorithm 2]{song2021scorebased} effectively alternates between reverse-SDE steps and Langevin sampling of a fixed density.
In this context, our results are relevant because they endow annealed Langevin sampling and related schemes with convergence results.

\section{Notation, Preliminaries, and Assumptions}\label{sec:preliminaries}
For $x,y\in\R^d$, we denote the Euclidean scalar product as $\langle x,y\rangle$ and the Euclidean norm as $|x|$.
We equip $\R^d$ with the standard Borel $\sigma$-algebra.
We denote the Lebesgue measure on \( \R^d \) as \( \lambda \).
Probability measures on $\R^d$ are denoted using greek letters, mainly $\mu,\nu,$ and $\pi$, and probability density functions (with respect to the Lebesgue measure) using roman letters, mainly $p$ and $q$.
The set of all probability measures on $\R^d$ is denoted as $\Pc(\R^d)$.
For $\mu,\nu$ such that $\mu$ is absolutely continuous with respect to $\nu$ (denoted $\mu\ll\nu$), we denote the Radon--Nikod\'ym derivative of $\mu$ with respect to $\nu$ as $\frac{\dd \mu}{\dd \nu}$.
We define the Kullback--Leibler divergence as $\KL:\Pc(\R^d)\times\Pc(\R^d)\rightarrow [0,\infty]$,
\begin{equation}
    \KL(\mu|\nu) = 
    \begin{cases}
        \int \log \left(\frac{\dd \mu}{\dd \nu}\right) \dd \mu,&\text{if }\mu\ll\nu,\\
        \infty,& \text{otherwise.}
    \end{cases}
\end{equation}
We denote with \( C^\infty_c(\Omega) \) the set of (infinitely) smooth functions with compact support in \( \Omega \), with \( B_R(c) = \{ x \in \R^d \mid |x - c| < R \} \) the open ball with radius \( R > 0 \) centered at \( c \in \R^d \), and with \( \1_A \) the characteristic function of the set \( A \).
We say that a function \( f : \R^d \to \R^n \) is \emph{Lipschitz-continuous} if there exists some $\lip > 0$ such that 
\begin{equation}
    \left| f(x) - f(y)\right|\leq \lip |x-y|    
\end{equation}
holds for all \( x, y \in \R^d \).
We say that a vector field \( v : \R^d \to \R^d \) is \emph{dissipative} if there exist \( \dissi, \dissii, \dissiii > 0 \) such that 
\begin{equation}
    \bigl\langle x, v(x) \bigr\rangle \geq \dissi |x|^2 - \dissii \1_{B_{\dissiii}(0)}(x)
\end{equation}
holds for any \( x \in \R^d \).
We call \( \dissi, \dissii, \dissiii > 0 \) (in that order) the \emph{dissipativity constants} of \( v \).
We say that a function \( f : \R^d \to \R \) is \emph{convex} if \( f(\alpha x+(1-\alpha)y) \leq \alpha f(x) + (1-\alpha)f(y) \) for all \( x, y \in \R^d \) and \( \alpha \in [0,1] \).
A function \( f \) is \emph{\( \alpha\)-weakly convex} if $f + \frac{\alpha}{2}|\,\cdot \,|^2$ is convex.
A function \( f \) is \emph{convex outside a ball} if there exists a compact set $K$ such that
\begin{equation}
    f(\lambda x+(1-\lambda)y)\leq \lambda f(x) + (1-\lambda)f(y)
\end{equation}
for all \( \lambda\in (0,1) \) and $x,y\in\R^d\setminus K$.
For some \( f : \R^d \to \R \), we define the \emph{Moreau envelope with Moreau parameter \( m \)}, denoted $M_f^m$, as 
\begin{equation}
    M_{f}^m(x) = \inf_{y\in\R^d} \bigl( f(y) + \tfrac{1}{2m}|x-y|^2 \bigr).
    \label{eq:moreau}
\end{equation}
Our theoretical analysis relies on the \gls{lsi}, which we now define.
\begin{definition}[Log-Sobolev inequality]
    We say that a probability measure $\nu$ satisfies the \emph{log-Sobolev inequality} if there exists some $C>0$ such that for every $f\in C^\infty_c(\R^d)$ it holds that
    \begin{equation}\label{eq:LSI}
        \int f^2\log\biggl(\frac{f^2}{\overline{f^2}}\biggr)\d \nu \leq C \int |\nabla f|^2 \d \nu,\tag{LSI}
    \end{equation}
    where $\overline{f^2} = \int f^2 \d \nu$.
    We call the smallest \( C \) such that \eqref{eq:LSI} holds the \emph{log-Sobolev constant} of \( \nu \), denoted \( \Clsi \).
\end{definition}

We denote the path of \emph{potentials} corresponding to the path of densities $(p_\tau)_\tau$ as $U_\tau \coloneqq -\log p_\tau$.

Our theoretical analysis of~\cref{eq:time-inhomogeneous Langevin diffusion} and its Euler--Maruyama discretization heavily relies on the following assumptions on $(p_\tau)_\tau$.
\begin{assumption}%
    \label{Ass_drift}
    The path $p_\tau$ satisfies the following:
    \begin{itemize}[nosep,leftmargin=0.7cm]
        \item \emph{At most quadratic growth of the time derivative.} There exists a constant $c>0$ such that $\left|\partial_\tau U_\tau(x)\right|\leq c(|x|^2+1)$ for any $x\in\R^d$ and $\tau \geq 0$.
        \item \emph{Smoothness.} 
        The potential \( U_\tau \) is differentiable and its gradient $\nabla U_\tau$ is $\lip_\tau$-Lipschitz-continuous.
        \item \emph{Dissipativity.} For any $\tau>0$, \( \nabla U_\tau\) is dissipative with dissipativity constants $\dissi_\tau,\dissii_\tau, \dissiii_\tau>0$.    
        \item Smoothness and dissipativity hold uniformly in $\tau$, that is,
        \begin{equation}
            \begin{aligned}
                &\lip\coloneqq\sup_{\tau>0}\lip_\tau<\infty,\quad
                \dissi\coloneqq\inf_{\tau>0}\dissi_\tau>0,\\
                &\dissii\coloneqq\sup_{\tau>0}\dissii_\tau<\infty,\quad
                \dissiii\coloneqq\sup_{\tau>0} \dissiii_\tau<\infty.
            \end{aligned}
        \end{equation}
        \item \emph{Log-Sobolev inequality.} $\pi_{\tau}$ admits an \gls{lsi} with constant $\Clsi(\tau)$ and $\sup_{\tau\geq 0} \Clsi(\tau)\leq \overline{\Clsi}<\infty.$
    \end{itemize}
\end{assumption}
The \gls{lsi} assumption is redundant if \( U_\tau \) is \emph{twice} continuously differentiable and satisfies the other assumptions, as we show in~\cref{lemma:dissip_implies_LSI}.
The addition of it does, however, allow us to extend our results to potentials that are only \emph{once} continuously differentiable.
In a slight abuse of notation we denote $\Clsi(t) = \Clsi(\tau(t))$.
\begin{remark}[Nonsmooth potentials]
    Our results only apply to smooth potentials.
    One may, however, use our results to obtain convergence to appropriate smooth surrogates of any nonsmooth target potentials.
\end{remark}

\section{Theoretical Analysis}%
\label{sec:analysis}
A prerequisite for the convergence is the existence of a unique (strong) solution of the Langevin diffusion \cref{eq:time-inhomogeneous Langevin diffusion}, which we now formally guarantee.
This result is standard, but we include it here to emphasize that the associated densities only satisfy the associated Fokker--Planck equation in the weak sense in general.
\begin{theorem}[Existence of strong solutions of the time-inhomogeneous Langevin diffusion]%
    \label{thm:fokker planck}
    The Langevin diffusion~\cref{eq:time-inhomogeneous Langevin diffusion} admits a unique strong solution $(X_t)_t$ for all time. Moreover, the solution admits a density $(q_t)_t$ with respect to the Lebesgue measure which satisfies the Fokker--Planck equation
    \begin{equation}\label{eq:FP}
    \partial_t q_t = \div(q_t\nabla U_{\tau(t)}) + \Delta q_t
\end{equation}
    in the weak sense, that is, for all $\phi\in C^\infty_c(\R^d\times (0,\infty))$ it holds
    \begin{equation}
        \begin{aligned}
            0 = \int\limits_{(0,\infty)} \int\limits_{\R^d} \bigl(\partial_t\phi(x,t) - \bigl\langle \nabla \phi(x,t), \nabla U_{\tau(t)}(x)\bigr\rangle \\
            + \Delta \phi(x,t) \bigr) q_t(x)\d x\d t
        \end{aligned}
    \end{equation}
\end{theorem}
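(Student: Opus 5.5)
The plan has three parts: existence and uniqueness from the classical Lipschitz theory of SDEs, existence of a density from the Girsanov theorem, and the weak Fokker--Planck equation from It\^o's formula applied to test functions. Throughout, we need the map $(x,t)\mapsto \nabla U_{\tau(t)}(x)$ to be jointly measurable and locally bounded in $t$. We obtain this by assuming $\tau$ measurable, and combining the uniform Lipschitz bound $\lip$ with the quadratic-growth bound on $\partial_\tau U_\tau$. The latter controls $\nabla U_\tau(0)$ along the path: integrating in $\tau$ gives continuity of $\tau\mapsto U_\tau$ locally uniformly, and Lipschitz gradients with uniform constant then make $\tau\mapsto\nabla U_\tau(0)$ locally bounded. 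Alternatively, dissipativity together with Lipschitz continuity gives $|\nabla U_\tau(0)|$ bounded uniformly.

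\emph{Existence and uniqueness.} The drift $b(x,t)=-\nabla U_{\tau(t)}(x)$ is globally Lipschitz in $x$ with constant $\lip$, uniformly in $t$. It also has linear growth, $|b(x,t)|\le \lip|x|+\sup_\tau|\nabla U_\tau(0)|$, where the supremum is finite by the previous paragraph. The standard It\^o existence and uniqueness theorem for SDEs with time-dependent, globally Lipschitz, linearly growing coefficients then gives a unique strong solution on every $[0,T]$, hence for all time. The same argument with Gronwall yields $\E\sup_{t\le T}|X_t|^2<\infty$; dissipativity improves this to a bound uniform in $t$, but that is not needed here.

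\emph{Existence of a density.} For $t>0$ I would use Girsanov. On $[0,t]$, the law of $(X_s)_{s\le t}$ is equivalent to the law of $X_0+\sqrt2 W$. The Novikov condition may fail globally because the drift has linear growth, so I would use the standard localization: Beneš' condition (linear growth suffices for the exponential to be a true martingale), or a stopping-time argument. Since $X_0+\sqrt2 W_t$ has a Lebesgue density for $t>0$, so does $X_t$. Equivalence of path laws gives $\mu_t\ll\lambda$, and we set $q_t=\dd\mu_t/\dd\lambda$. I expect this step to be the main technical obstacle, precisely because of the martingale property of the Girsanov density under unbounded drift. My first attempt would be Beneš' criterion for linear-growth drifts. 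As a fallback, I would truncate the drift on balls, apply Novikov to each truncation, and pass to the limit using the moment bounds.

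\emph{Weak Fokker--Planck equation.} For $\phi\in C^\infty_c(\R^d\times(0,\infty))$, apply It\^o's formula to $\phi(X_t,t)$:
\[
\dd\phi(X_t,t)=\bigl(\partial_t\phi-\langle\nabla\phi,\nabla U_{\tau(t)}\rangle+\Delta\phi\bigr)(X_t,t)\,\dd t+\sqrt2\langle\nabla\phi(X_t,t),\dd W_t\rangle .
\]
Integrate from $0$ to $T$, with $T$ beyond the time support of $\phi$, so that both boundary terms vanish. The stochastic integral has bounded integrand and therefore zero expectation. Taking expectations and applying Fubini, which is justified because the integrand is bounded by $C(1+|X_t|)$ on a compact time interval and we have the moment bound, gives $0=\int_0^\infty \E[\cdots]\,\dd t$. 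Rewriting each expectation as an integral against $q_t(x)\,\dd x$ gives exactly the stated identity. We do not claim any further regularity of $q_t$: with $\nabla U_\tau$ only Lipschitz, the equation holds only in this distributional sense.
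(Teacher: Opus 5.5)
Your argument is correct. The existence/uniqueness step and the It\^o step match the paper's, but you obtain the density by a genuinely different route.

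The paper first derives the weak equation for the measures $\mu_t$. It then invokes the analytic regularity theory for Fokker--Planck--Kolmogorov equations with locally bounded drift (Bogachev, Krylov, R\"ockner, Shaposhnikov). That theory gives a density $q\in L^1_{\mathrm{loc}}(\R^d\times(0,\infty))$ which is also continuous, strictly positive, and in $W^{1,p}_{\mathrm{loc}}(\R^d)$ for a.e.\ $t$.

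You instead get $\mu_t\ll\lambda$ probabilistically, from Girsanov plus Bene\v{s}' criterion. To close this step you need uniqueness in law, which follows from pathwise uniqueness via Yamada--Watanabe. It identifies the tilted law of $X_0+\sqrt2 W$ with $\law(X)$. Since the Girsanov density is strictly positive, you even get $\mu_t\sim\lambda$. Your route is elementary and self-contained.

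You are also more careful than the paper about the hypotheses of the Lipschitz existence theorem: joint measurability, and linear growth. Your claim that $|\nabla U_\tau(0)|$ is uniformly bounded is right. Testing dissipativity at $x=-r\nabla U_\tau(0)/|\nabla U_\tau(0)|$ and optimizing over $r$ gives $|\nabla U_\tau(0)|\le 2\sqrt{\dissii\lip}$.

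What your route does not provide is the regularity the paper's citation delivers. That regularity is used essentially later, in the mollification proof of \cref{lemma_time_deriv_KL}: continuity and positivity of $q$, two-sided Gaussian bounds, and $\nabla q\in L^2$.

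Two small points. First, you obtain $q_t$ separately for each $t$, so you should note that a jointly measurable version exists. For instance, take the Radon--Nikod\'ym derivative of $\mu_t(\dd x)\,\dd t$ with respect to $\lambda\otimes\dd t$; this makes the double integral in the weak formulation meaningful. Second, the Fubini step needs no moment bound. Since $\phi$ has compact support in $(x,t)$ and $\nabla U_{\tau(t)}$ is bounded on that support, the integrand is bounded.
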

\begin{proof}
    See \cref{proof:thm fokker planck}.
\end{proof}
\subsection{Continuous-Time Diffusion}
\label{ssec:continuous analysis}
We will now state our results regarding the continuous-time Langevin diffusion.
Our main result, the convergence in forward-Kullback--Leibler divergence in~\cref{cor:final convergence continuous}, will be based on a particular expression of the time-derivative of the forward-Kullback--Leibler divergence that we give in the following lemma.
\begin{lemma}%
\label{lemma_time_deriv_KL}
    Let $\mu_t$ be the distribution of $X_t$ satisfying \eqref{eq:time-inhomogeneous Langevin diffusion} and $q_t$ its Lebesgue density. Then,
    \begin{equation}
        \begin{aligned}
            \frac{\d}{\d t}&\KL(\mu_t|\pi_{\tau(t)}) = \\
            &-\int_{\R^d} q_t\biggl|\nabla \log\frac{q_t}{p_{\tau(t)}}\biggr|^2 
            - q_t\partial_t\log p_{\tau(t)}\,\dd x.
        \end{aligned}
        \label{eq:time derviative KL}
    \end{equation}
\end{lemma}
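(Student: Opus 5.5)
The plan is to prove \cref{eq:time derviative KL} first in integrated form and then recover the derivative. Write
\[
\KL(\mu_t|\pi_{\tau(t)}) = \int q_t\log q_t\,\dd x + \int q_t U_{\tau(t)}\,\dd x
\]
(up to the normalization, which we fold into $U_\tau$ so that $p_\tau=e^{-U_\tau}$ integrates to one). The formal computation is then short. The time derivative of the entropy part is $\int \partial_t q_t(\log q_t+1)$. The potential part contributes $\int \partial_t q_t\, U_{\tau(t)} + \int q_t\,\partial_t U_{\tau(t)}$. Inserting the Fokker--Planck equation \cref{eq:FP} and integrating by parts, the $\partial_t q_t$ terms combine to
\[
-\int \Bigl\langle \nabla q_t + q_t\nabla U_{\tau(t)},\, \nabla\log q_t + \nabla U_{\tau(t)}\Bigr\rangle = -\int q_t\Bigl|\nabla\log\tfrac{q_t}{p_{\tau(t)}}\Bigr|^2 .
\]
The remaining term is $\int q_t\,\partial_t U_{\tau(t)} = -\int q_t\,\partial_t\log p_{\tau(t)}$, which gives the claimed identity. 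The real work is justifying this, since by \cref{thm:fokker planck} we only have a weak solution. The derivative should therefore be understood in the a.e./distributional sense, via the statement that $t\mapsto \KL(\mu_t|\pi_{\tau(t)})$ is absolutely continuous with derivative given by the right-hand side.

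To make it rigorous, I would mollify. Fix a time interval $[s,t]\subset(0,\infty)$ and a spatial mollifier $\rho_\epsilon$, and set $q^\epsilon_r = q_r*\rho_\epsilon$. The weak formulation, tested with $\rho_\epsilon(x-\cdot)$ times a time cutoff, shows that $q^\epsilon$ solves
\[
\partial_t q^\epsilon = \Delta q^\epsilon + \div\bigl((q\nabla U_{\tau})*\rho_\epsilon\bigr)
\]
classically in $x$ and absolutely continuously in $t$. Since $q^\epsilon$ is smooth, I would then compute $\frac{\dd}{\dd r}\int q^\epsilon_r\log(q^\epsilon_r/p_{\tau(r)})$, truncating the logarithm by $\log(\cdot\vee\delta)$ and cutting off spatially with $\chi_R(x)=\chi(x/R)$, and integrate from $s$ to $t$. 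The error terms from the cutoffs must vanish as $R\to\infty$. This uses moment bounds: dissipativity together with $\lip<\infty$ gives $\sup_{r\le T}\E|X_r|^2<\infty$ by It\^o's formula, and more generally bounded moments of every order. The quadratic growth of $\partial_\tau U_\tau$ and the linear growth of $\nabla U_\tau$ then make all terms integrable. Finally, I would let $\epsilon\to0$. The linear terms converge by the moment bounds. For the Fisher-information term I would use lower semicontinuity (convexity of $(a,v)\mapsto |v|^2/a$) to obtain at least the inequality ``$\le$''.

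The main obstacle is exactly this step: establishing that $\int_s^t\!\int q_r|\nabla\log(q_r/p_{\tau(r)})|^2<\infty$ and that the identity holds with equality, rather than just as an entropy inequality. For the equality, I would follow the standard renormalization argument (as in Ambrosio--Gigli--Savaré for Fokker--Planck equations with Lipschitz drift). The approach is to show a priori, from the inequality version, that $\nabla\sqrt{q}\in L^2_{t,x}$. With this, together with the Lipschitz drift, the commutator $(q\nabla U)*\rho_\epsilon - q^\epsilon\nabla U$ converges to $0$ in the weighted $L^2(1/q^\epsilon)$ sense needed for the cross term, by a DiPerna--Lions type commutator estimate. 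That upgrades the inequality to an equality. Differentiating the integrated identity in $t$ (Lebesgue differentiation) then yields \cref{eq:time derviative KL} for a.e.\ $t$, which is all that is needed for the subsequent Grönwall argument with the LSI.
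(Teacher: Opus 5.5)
Your route differs from the paper's and is viable in principle. The paper mollifies in space and time and relies on imported regularity from Bogachev et al.: continuity and strict positivity of $q_t$, the two-sided bounds $\exp(-c_1(|x|^2+1))\le q_t\le \exp(-c_2|x|^2)$ on $[t_1,t_2]$, $\nabla q\in L^2$, and finite Fisher information. It first sends $\epsilon\to0$ at a \emph{fixed} cutoff radius $R$, where the logarithm is Lipschitz and $\partial_{x_i}q^\epsilon/\sqrt{q^\epsilon}\to\partial_{x_i}q/\sqrt{q}$ in $L^2(B_{R+1})$. Only then does it send $R\to\infty$ by dominated convergence. You reverse the order: you remove the cutoff at fixed $\epsilon$ using only moments, with a truncated logarithm replacing positivity, and then pass $\epsilon\to0$ globally by convexity. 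This needs much less imported regularity and yields finiteness of $\iint q|\nabla\log(q/p_{\tau})|^2$ as an output instead of a citation. As written, however, it has two gaps.

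First, the step you credit with upgrading ``$\le$'' to equality is misidentified, and your ordering is circular. For $\rho_\epsilon\ge0$ supported in $B_\epsilon$, the commutator $r^\epsilon=(q\nabla U_\tau)*\rho_\epsilon-q^\epsilon\nabla U_\tau$ satisfies the elementary pointwise bound $|r^\epsilon|\le\lip\,\epsilon\,q^\epsilon$. Hence $\iint|r^\epsilon|^2/q^\epsilon\le\lip^2\epsilon^2(t-s)$, with no a priori information on $\nabla\sqrt q$ and no DiPerna--Lions argument. You need this bound \emph{before} lower semicontinuity can give anything. With $K_\epsilon(r)=\int q^\epsilon_r\log(q^\epsilon_r/p_{\tau(r)})\,\dd x$ and $I_\epsilon=\iint q^\epsilon|\nabla\log(q^\epsilon/p_\tau)|^2$, the mollified identity reads
$K_\epsilon(t)-K_\epsilon(s)=-I_\epsilon-\iint\langle r^\epsilon,\nabla\log(q^\epsilon/p_\tau)\rangle-\iint q^\epsilon\partial_t\log p_\tau$.
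The cross term is at most $\lip\epsilon\sqrt{t-s}\sqrt{I_\epsilon}$. It must be absorbed to get $\sup_\epsilon I_\epsilon<\infty$, and then lower semicontinuity gives $I:=\iint q|\nabla\log(q/p_\tau)|^2\le\liminf I_\epsilon$, i.e.\ only the inequality. Making the cross term vanish cannot produce the reverse inequality. The missing ingredient is the Jensen bound from joint convexity of $(a,v)\mapsto|v|^2/a$. Write $\nabla q^\epsilon+q^\epsilon\nabla U_\tau=j*\rho_\epsilon-r^\epsilon$ with $j=\nabla q+q\nabla U_\tau$. Once $I<\infty$ is known, $\iint|j*\rho_\epsilon|^2/q^\epsilon\le\iint|j|^2/q=I$, so $\sqrt{I_\epsilon}\le\sqrt I+\lip\epsilon\sqrt{t-s}$ and hence $I_\epsilon\to I$. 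This is the global counterpart of the paper's local strong convergence of $\partial_{x_i}q^\epsilon/\sqrt{q^\epsilon}$.

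Second, you never secure $\KL(\mu_s|\pi_{\tau(s)})<\infty$ at the left endpoint. It is needed both for $\sup_\epsilon I_\epsilon<\infty$ and for $\int q^\epsilon_s\log q^\epsilon_s\to\int q_s\log q_s$, which follows from Jensen above and lower semicontinuity below. Moment bounds do not give this when $\mu_0$ has infinite entropy; the paper obtains it from the Gaussian two-sided bounds. In your framework you need either a smoothing estimate or a standing assumption of finite initial KL. For the smoothing estimate, Girsanov against $X_0+\sqrt2W_s$ gives $\KL(\mu_s|\mu_0*\Nc(0,2s\,\id))<\infty$, and that reference law has density bounded by $(4\pi s)^{-d/2}$, so $\int q_s\log q_s<\infty$.
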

\begin{proof}
    See~\cref{proof:lemma_time_deriv_KL}.
\end{proof}
The expression of the time-derivative of the forward-Kullback--Leibler divergence in~\cref{eq:time derviative KL} can also be found in~\cite{chehab2025provable,vempala2019rapid}.
To the best of our knowledge, however, the cited works implicitly assume sufficient regularity of the involved functions which are only known to be weak solutions of the Fokker--Planck equation in general.
In contrast, our proof is based on careful mollification techniques and holds also if the density $q_t$ is only weakly differentiable and satisfies the Fokker--Planck equation in the weak sense.

\begin{lemma}%
    \label{thm:continuous diffusion final convergence}
    There exists some $c>0$ such that\footnote{For simplicity we use the single symbol $c$ for all appearing constants.}
    \begin{equation}\label{eq:cont error term}
        \begin{aligned}
            \KL(\mu_t|\pi) &\leq c \tau(t) \\
            &+ \exp\biggl(-\int_0^t\frac{4}{\Clsi(s)}\dd s\biggr)\KL(\mu_0|\pi_{\tau(0)})  \\
            &+ c \int_{0}^t \exp\biggl(-\int_s^t\frac{4}{\Clsi(\sigma)}\dd\sigma\biggr)|\dot{\tau}(s)|\d s.
        \end{aligned}
    \end{equation}
\end{lemma}
\begin{proof}
    See~\cref{proof:thm:continuous diffusion final convergence}.
\end{proof}
Though it is possible to derive the appearing constants explicitly, we leave this for future work and only provide qualitative convergence results within this article.

We are now in a position to state our main result about the convergence of the time-inhomogeneous Langevin diffusion in forward-Kullback--Leibler divergence.
\begin{theorem}[Forward-KL convergence of the time-inhomogeneous Langevin diffusion]%
    \label{cor:final convergence continuous}
    Let $\tau:[0,\infty)\rightarrow [0,\infty)$ be such that $\tau(t)\rightarrow 0$ as $t\rightarrow\infty$ and $\dot{\tau}\leq 0$. 
    Then,
    \begin{equation}
        \lim_{t\rightarrow\infty}\KL(\mu_t|\pi)=0.
    \end{equation}
    Moreover, as a worst case complexity bound in order to reach accuracy $\KL(\mu_t|\pi)\leq \epsilon$, ignoring constants, we require that $t = \Oc(t_1+\log(\epsilon^{-1}))$ with $t_1>0$ such that $\tau(t_1)=\Oc(\epsilon)$.
\end{theorem}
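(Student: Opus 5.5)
The plan is to start from \cref{thm:continuous diffusion final convergence} and show that each of its three terms on the right-hand side vanishes as $t\to\infty$. By \cref{Ass_drift} we have $\Clsi(s)\leq\overline{\Clsi}$, so $4/\Clsi(s)\geq \kappa\coloneqq 4/\overline{\Clsi}>0$. Hence $\exp(-\int_s^t 4/\Clsi(\sigma)\dd\sigma)\leq e^{-\kappa(t-s)}$. The first term $c\tau(t)$ tends to zero by assumption. The second term is bounded by $e^{-\kappa t}\KL(\mu_0|\pi_{\tau(0)})$, and it tends to zero provided the initial divergence is finite, which is implicitly assumed for the bound to be informative.

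The main step is the third term. Since $\dot\tau\leq0$, we have $|\dot\tau|=-\dot\tau$, and because $\tau\geq0$,
\[
\int_0^\infty|\dot\tau(s)|\dd s=\tau(0)-\lim_{t\to\infty}\tau(t)=\tau(0)<\infty .
\]
So $|\dot\tau|\in L^1(0,\infty)$, and the third term is bounded by the convolution $\int_0^t e^{-\kappa(t-s)}|\dot\tau(s)|\dd s$. I split this integral at $t/2$ (or at a fixed $t_1$):
\[
\int_0^{t_1}e^{-\kappa(t-s)}|\dot\tau(s)|\dd s+\int_{t_1}^t e^{-\kappa(t-s)}|\dot\tau(s)|\dd s\leq e^{-\kappa(t-t_1)}\tau(0)+\bigl(\tau(t_1)-\tau(t)\bigr).
\]
For the second piece, the exponential factor is bounded by one and monotonicity gives $\int_{t_1}^t|\dot\tau|=\tau(t_1)-\tau(t)\leq\tau(t_1)$. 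Choosing $t_1=t/2$ makes both pieces vanish, which proves $\lim_{t\to\infty}\KL(\mu_t|\pi)=0$. Dominated convergence would also work here. Absolute continuity of $\tau$ is needed so that $\int|\dot\tau|$ equals the total decrease, and I will assume the regularity of $\tau$ implicit in \cref{thm:continuous diffusion final convergence}.

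For the complexity statement, I reuse the same split with $t_1$ chosen so that $\tau(t_1)=\Oc(\epsilon)$. For $t\geq t_1$, the bound becomes
\[
\KL(\mu_t|\pi)\leq c\,\tau(t)+e^{-\kappa t}\KL(\mu_0|\pi_{\tau(0)})+c\,e^{-\kappa(t-t_1)}\tau(0)+c\,\tau(t_1).
\]
Here $\tau(t)\leq\tau(t_1)=\Oc(\epsilon)$ by monotonicity. The two exponential terms fall below $\epsilon$ once $t-t_1\gtrsim\kappa^{-1}\log(\epsilon^{-1})$, with constants depending on $\tau(0)$, $\KL(\mu_0|\pi_{\tau(0)})$ and $\overline{\Clsi}$. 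This gives $t=\Oc(t_1+\log(\epsilon^{-1}))$.

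I expect the only delicate point to be the tracking term: recognizing that monotonicity turns $\int|\dot\tau|$ into a telescoping difference. This is exactly what lets the schedule's speed appear only through $t_1$. Everything else is the uniform LSI bound.
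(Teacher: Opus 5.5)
Your proposal is correct and follows essentially the paper's argument: starting from \cref{thm:continuous diffusion final convergence}, you use the uniform bound $\Clsi\leq\overline{\Clsi}$ for exponential decay and split the tracking integral at $t_1$. The early piece is then bounded by the decaying exponential times $\tau(0)$, and the late piece by the telescoped decrease $\tau(t_1)-\tau(t)$ using $\dot\tau\leq 0$. The only minor difference is in the complexity part. There the paper bounds the early piece via $\|\dot\tau\|_\infty$, whereas your bound $\int_0^{t_1}|\dot\tau|\,\dd s\leq\tau(0)$ yields the same $\Oc(t_1+\log(\epsilon^{-1}))$ estimate without requiring $\dot\tau$ to be bounded.
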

\begin{proof}
    See~\cref{proof:cor:final convergence continuous}.
\end{proof}
\subsection{Discrete-Time Diffusion}%
\label{sec:discrete analysis}
We consider the Euler--Maruyama discretization of the time-inhomogeneous Langevin diffusion \cref{eq:time-inhomogeneous Langevin diffusion}, which results in the update rule
\begin{equation}\label{eq:discretization}
    X_{k+1} = X_k - \step_k\nabla U_{\tau(t_{k})}(X_k) + \sqrt{2\step_k} Z_k
\end{equation}
for \( k = 0, 1, \dotsc \), where $(Z_k)_k$ is a sequence of i.i.d. vectors $Z_k\sim\Nc(0,\id)$, $(h_k)_k$ are the step sizes, and $t_k = \sum_{i=0}^{k-1}h_i$.
We denote the law of \( X_k \) that tracks~\cref{eq:discretization} as \( \hat{\mu}_k \).
The structure of our results is analogous to the continuous case described in~\cref{ssec:continuous analysis}:
We first relate the Kullback--Leibler divergences between the sample distributions of two consecutive iterates and the corresponding targets to each other (\cref{lemma:discretization descent}), then relate the Kullback--Leibler divergences between the sample distribution of an arbitrary iterate to the target (\cref{thm discrete final bound}), and then show that this expression can be made arbitrarily small (\cref{corr discrete convergence}). The following result is mostly taken from Lemma 3 in \cite{vempala2019rapid}. We provide a proof in the appendix nonetheless for the sake of completeness and since the exposition in \cite{vempala2019rapid} omits some details about exchanging integrals and derivatives.
\begin{lemma}%
    \label{lemma:discretization descent}
    Assume that the initial distribution admits a finite third moment, and let $\step_k<\frac{\dissi_{\tau(t_k)}}{\lip^2_{\tau(t_k)}}$.
    Then there exists some $c>0$ such that for any $k\in\N$
    \begin{equation}
        \begin{aligned}
            &\KL(\hat \mu_{k+1}|\pi_{\tau(t_k)}) \\
            &\quad\leq \KL(\hat \mu_k|\pi_{\tau(t_k)})\exp\biggl( -\frac{2\step_k}{\Clsi(t_k)} \biggr) + c \step_k^2.
        \end{aligned}
    \end{equation}
\end{lemma}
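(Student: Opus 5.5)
Within the step the target is frozen: write $\tau_k=\tau(t_k)$, $U=U_{\tau_k}$, $\pi_k=\pi_{\tau_k}$. We then follow the interpolation argument of Lemma 3 in \cite{vempala2019rapid}. On $s\in[0,\step_k]$ define
\[
x_s = X_k - s\nabla U(X_k) + \sqrt{2}\,(W_{t_k+s}-W_{t_k}),
\]
so that $x_{\step_k}\overset{d}{=}X_{k+1}$. Let $\rho_s$ be the law of $x_s$. Conditioning on $X_k=x_0$, the conditional law of $x_s$ is a Gaussian with a smooth, positive density. Hence $\rho_s$ is a smooth positive density for every $s>0$. It satisfies the weak Fokker--Planck equation
\[
\partial_s\rho_s=\div\bigl(\rho_s\,\E[\nabla U(x_0)\mid x_s=\cdot]\bigr)+\Delta\rho_s .
\]
Differentiating $\KL(\rho_s|\pi_k)$ in $s$ and adding and subtracting $\nabla U(x_s)$ should give
\[
\frac{\d}{\d s}\KL(\rho_s|\pi_k)\le -\tfrac34 I(\rho_s|\pi_k)+\E|\nabla U(x_s)-\nabla U(x_0)|^2 ,
\]
where $I(\rho|\pi_k)=\int\rho\,|\nabla\log(\rho/p_{\tau_k})|^2$ is the relative Fisher information. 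The cross term is handled with Young's inequality together with Jensen's inequality for the conditional expectation.

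Since the step sizes vary, we need a bound on the error term $\E|\nabla U(x_s)-\nabla U(x_0)|^2$ that does not use $\pi_k$. By the $\lip$-Lipschitz property,
\[
\E|\nabla U(x_s)-\nabla U(x_0)|^2\le \lip^2\,\E|x_s-x_0|^2\le 2\lip^2 s^2\,\E|\nabla U(X_k)|^2+4\lip^2 d s .
\]
Vempala--Wibisono bound $\E|\nabla U(X_k)|^2$ through the Fisher information, which makes the constants depend on the target. Here we instead use uniform moment bounds. Linear growth gives $|\nabla U(x)|\le \lip|x|+|\nabla U(0)|$, and $|\nabla U_\tau(0)|$ is uniformly bounded, since dissipativity and the uniform Lipschitz constant confine the minimizers of $U_\tau$ to a fixed ball. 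So it suffices to show that $\sup_k\E|X_k|^2<\infty$. We prove this recursively from \cref{eq:discretization}. Expanding $|X_{k+1}|^2$, dissipativity gives the term $-2\step_k\langle X_k,\nabla U(X_k)\rangle\le -2\step_k\dissi|X_k|^2+2\step_k\dissii$. The step-size condition $\step_k<\dissi/\lip^2$ lets this term dominate $\step_k^2|\nabla U(X_k)|^2$. This yields $\E|X_{k+1}|^2\le(1-c\step_k)\E|X_k|^2+c\step_k$, so the second moments are uniformly bounded. The third-moment assumption on $\hat\mu_0$ enters only to justify the differentiations and integrations by parts: it gives the integrability needed to exchange derivatives and integrals and to discard boundary terms, using the Gaussian smoothing of $\rho_s$.

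We now close the estimate. There is a mismatch: the dissipation term carries the factor $\tfrac34$, while the claim asks for the rate $2/\Clsi$ with $I$ entering at full strength. This is the main obstacle. The remedy is to use a sharper Young split, $-(1-\eta)I+\eta^{-1}\cdot\text{error}$, or to absorb the discrepancy into the constant $c$. With the paper's normalization of \eqref{eq:LSI}, this gives $\KL(\rho|\pi_k)\le \tfrac{\Clsi}{4}I(\rho|\pi_k)$, hence
\[
\frac{\d}{\d s}\KL(\rho_s|\pi_k)\le -\frac{2}{\Clsi(t_k)}\KL(\rho_s|\pi_k)+c\,s ,
\]
which has ample room to reach the stated rate. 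Grönwall's inequality on $[0,\step_k]$ then gives
\[
\KL(\hat\mu_{k+1}|\pi_k)\le e^{-2\step_k/\Clsi(t_k)}\KL(\hat\mu_k|\pi_k)+c\,\step_k^2 .
\]
Here $c$ depends only on $\lip$, $\dissi$, $\dissii$, $\dissiii$, $d$ and the initial moments, which makes it uniform in $k$.

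The delicate points are the rigorous justification of the $s$-derivative and the uniform moment bound that frees the error term from any dependence on $\pi_k$. I would handle the derivative by first restricting to $s\ge\delta>0$, where $\rho_s$ is smooth, and then letting $\delta\to0$ using continuity of $\KL$ along the Gaussian smoothing.
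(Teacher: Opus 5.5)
Your proposal is correct and is essentially the paper's proof. Both use the same interpolation with the conditional-expectation drift $b(x)=\E[\nabla U(X_0)\mid X_t=x]$, Young's inequality on the cross term, a uniform second-moment bound to control $\E|X_t-X_0|^2=t^2\E|\nabla U(X_0)|^2+2td$, the LSI in the form $\KL\le\frac{\Clsi}{4}I$, and Gr\"onwall over one step. The only difference is the Young split: the paper uses $-\frac12 I+\frac{\lip^2}{2}\E|X_t-X_0|^2$ where you use the Vempala--Wibisono $-\frac34 I$. There is in fact no ``mismatch'' to remedy, since $-\frac34 I\le-\frac{3}{\Clsi}\KL\le-\frac{2}{\Clsi}\KL$ directly. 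Absorbing a rate deficit into the additive constant $c$ would not have been a valid fix anyway.
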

\begin{proof}
    See~\cref{proof:lemma discrete one step}
\end{proof}
\begin{lemma}%
    \label{thm discrete final bound}
    Under the assumptions of~\cref{lemma:discretization descent} the discrete scheme~\cref{eq:discretization} satisfies for some $c>0$
    \begin{equation}\label{eq:discrete error term}
        \begin{aligned}
            &\KL(\hat{\mu}_{k}|\pi) 
            \leq \KL(\hat \mu_{0}|\pi_{\tau(t_0)})\exp\biggl(-\sum\limits_{i=1}^k\frac{2\step_i}{\Clsi(t_i)}\biggr)\\
            &+c\sum_{i=0}^{k-1}\exp\biggl(-\sum\limits_{j=0}^i\frac{2\step_{k-j}}{\Clsi(t_{k-j})}\biggr)|\tau(t_{k-i})-\tau(t_{k-1-i})|\\
            &+ c \sum_{i=0}^{k-1}\step_{k-i}^2\exp\biggl(-\sum\limits_{j=0}^{i-1}\frac{2\step_{k-j}}{\Clsi(t_{k-j})}\biggr) + c\tau(t_k)
        \end{aligned}
    \end{equation}
\end{lemma}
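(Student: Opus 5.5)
The plan is to unroll the one-step contraction of \cref{lemma:discretization descent} after inserting a change-of-target step between consecutive iterations, and to add a final comparison between $\pi_{\tau(t_k)}$ and $\pi$.

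\textbf{Step 1: change of target.} We need a bound of the form
\begin{equation*}
    \KL(\hat\mu_{k}|\pi_{\tau(t_{k})}) \leq \KL(\hat\mu_{k}|\pi_{\tau(t_{k-1})}) + c\,|\tau(t_k)-\tau(t_{k-1})|.
\end{equation*}
Writing $\KL(\mu|\pi_\tau) = \int \log q\,\dd\mu + \int U_\tau\,\dd\mu + \log Z_\tau$, the difference of the two divergences equals
\begin{equation*}
    \int (U_{\tau'}-U_{\tau})\,\dd\hat\mu_k + \log Z_{\tau'} - \log Z_\tau .
\end{equation*}
Both parts are controlled by the fundamental theorem of calculus in $\tau$ together with the quadratic-growth assumption $|\partial_\tau U_\tau(x)|\le c(|x|^2+1)$. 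The first part is then at most $c|\tau'-\tau|\,(\E_{\hat\mu_k}|X|^2+1)$. For the second, $\partial_\tau \log Z_\tau = -\E_{\pi_\tau}[\partial_\tau U_\tau]$, so it is at most $c|\tau'-\tau|\sup_\tau(\E_{\pi_\tau}|X|^2+1)$.

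\textbf{Step 2: uniform second moments.} Step 1 needs $\E_{\pi_\tau}|X|^2$ and $\E_{\hat\mu_k}|X|^2$ bounded uniformly. For $\pi_\tau$ this follows from the uniform dissipativity constants (alternatively, from the uniform LSI via Herbst). For the iterates, expand
\begin{equation*}
    \E|X_{k+1}|^2 = \E|X_k|^2 - 2h_k\E\langle X_k,\nabla U\rangle + h_k^2\E|\nabla U|^2 + 2dh_k .
\end{equation*}
Dissipativity and the Lipschitz bound $|\nabla U(x)|\le L|x|+C$ then give
\begin{equation*}
    \E|X_{k+1}|^2\le (1-2ah_k+2L^2h_k^2)\E|X_k|^2 + Ch_k .
\end{equation*}
Under the stepsize restriction $h_k<a/L^2$ the coefficient is at most $1-ah_k$, so the second moments stay bounded by $\max(\E|X_0|^2, C/a)$. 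This step is the main technical obstacle, since every constant $c$ must be uniform in $k$. It is also presumably where finite moments of the initial law enter.

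\textbf{Step 3: recursion and unrolling.} Combine Step 1 with \cref{lemma:discretization descent}, writing $\rho_k=\exp(-2h_k/\Clsi(t_k))$:
\begin{equation*}
    \KL(\hat\mu_{k+1}|\pi_{\tau(t_{k+1})})\le \rho_k\KL(\hat\mu_k|\pi_{\tau(t_k)}) + c h_k^2 + c|\tau(t_{k+1})-\tau(t_k)|.
\end{equation*}
Iterating this recursion yields a geometric product acting on the initial divergence, a discounted sum of the $|\Delta\tau|$ increments, and a discounted sum of the $h^2$ terms. This matches \cref{eq:discrete error term} after reindexing $i\mapsto k-i$; I would not fuss over the exact index offsets, since with bounded $h$ and $\Clsi$ they only change constants.

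\textbf{Step 4: passing to $\pi$.} Finally, $\KL(\hat\mu_k|\pi)\le \KL(\hat\mu_k|\pi_{\tau(t_k)})+c\tau(t_k)$. This is Step 1 applied between $\tau(t_k)$ and $\tau=0$, and it gives the last term $c\tau(t_k)$.
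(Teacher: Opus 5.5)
Your plan follows the paper's proof. The ingredients are the same: \cref{lemma:discretization descent}, a change-of-target bound from the quadratic growth of $\partial_\tau U_\tau$ plus uniform second moments, unrolling, and a final comparison with $\pi=\pi_0$ that gives $c\tau(t_k)$. Your treatment of $\log Z_\tau$ via $\partial_\tau\log Z_\tau=-\E_{\pi_\tau}[\partial_\tau U_\tau]$ is a clean substitute for \cref{lem:potential and partition are Lipschitz in time}.

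The one real flaw is in Step 2, the step you flag yourself. The inequality $1-2\dissi\step_k+2\lip^2\step_k^2\le 1-\dissi\step_k$ holds only for $\step_k\le\dissi/(2\lip^2)$. As $\step_k\uparrow\dissi/\lip^2$ the contraction margin $2\step_k(\dissi-\lip^2\step_k)$ vanishes, so under the actual hypothesis $\step_k<\dissi_{\tau(t_k)}/\lip^2_{\tau(t_k)}$ your recursion gives no $k$-uniform bound. The fix is to avoid the factor $2$. Use $|\nabla U_\tau(x)|\le\lip_\tau(|x|+|x^*_\tau|)$, $|x^*_\tau|^2\le\dissii/\dissi$ and Young's inequality to get $|\nabla U_\tau(x)|^2\le\tfrac32\lip_\tau^2|x|^2+3\lip_\tau^2|x^*_\tau|^2$. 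With the step-$k$ constants the coefficient becomes $1-2\dissi_{\tau(t_k)}\step_k+\tfrac32\lip^2_{\tau(t_k)}\step_k^2<1-\tfrac12\dissi\step_k$, and the additive term stays $\Oc(\step_k)$ uniformly in $k$. The paper instead invokes \cref{lem_second moments of discretization remain bounded}, whose recursion has the same factor $2$ and so needs an extra summability condition on the steps. Your direct argument, once corrected, needs only the hypothesis of \cref{lemma:discretization descent}.

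A smaller point concerns Step 3. You change the target after the contraction, so each $|\tau(t_i)-\tau(t_{i-1})|$ carries one factor $e^{-2\step_k/\Clsi(t_k)}$ fewer than in \cref{eq:discrete error term}. Absorbing this into $c$ requires that factor to be bounded \emph{below}, i.e.\ a lower bound on $\Clsi$, not the upper bound you cite. Such a bound does hold: testing the Poincar\'e inequality implied by the LSI on linear functions and using $\E_{\pi_\tau}|X-\E_{\pi_\tau}X|^2\ge d/\lip_\tau$ gives $\Clsi(\tau)\ge 2/\lip_\tau$, hence $2\step_k/\Clsi(t_k)<\dissi_{\tau(t_k)}/\lip_{\tau(t_k)}\le 1$. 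It is simpler, though, to do what the paper does: recurse on $\KL(\hat\mu_{k+1}|\pi_{\tau(t_k)})$ with the change of target inside the contraction,
\[
\KL(\hat\mu_{k+1}|\pi_{\tau(t_k)})\le e^{-2\step_k/\Clsi(t_k)}\bigl[\KL(\hat\mu_k|\pi_{\tau(t_{k-1})})+c|\tau(t_k)-\tau(t_{k-1})|\bigr]+c\step_k^2,
\]
which produces the stated discounting directly. The remaining one-step index shifts are indeed harmless. The paper's own proof also matches the statement only up to such a shift: it ends with $\KL(\hat\mu_{k+1}|\pi)$ and initial term $\KL(\hat\mu_1|\pi_{\tau(t_0)})$.
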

\begin{proof}
    See~\cref{proof:thm discrete final bound}.
\end{proof}
\begin{theorem}[Forward-KL convergence of the Euler--Maruyama discretization of the time-inhomogeneous Langevin diffusion]%
    \label{corr discrete convergence}
    Let $\tau:[0,\infty)\rightarrow [0,\infty)$ be such that $\tau(t)\rightarrow 0$ as $t\rightarrow\infty$ and $\dot{\tau}\leq 0$.
    Let the assumptions of~\cref{lemma:discretization descent} be satisfied and assume that $\sum_k h_k=\infty$ and $\sum_k h_k^2<\infty$.
    Then
    \begin{equation}
        \lim_{k\rightarrow\infty}\KL(\hat{\mu}_{k}|\pi) = 0.
    \end{equation}
    Moreover, choosing the step size constant, \ie, $\step_k = \step$ for all $k$ as a worst case complexity in order to obtain $\KL(\hat{\mu}_{k+1}|\pi)$ we require $h = \Oc(\epsilon)$ and $k = \Oc(T_1\epsilon^{-1} + \log(\epsilon^{-1})h^{-1})$ with $T_1$ such that $\tau(T_1)=\Oc(\epsilon)$.
\end{theorem}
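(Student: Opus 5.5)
The proof starts from the bound of \cref{thm discrete final bound} and shows that each of its four terms vanishes as $k\to\infty$. Throughout we use $\Clsi(t)\le\overline{\Clsi}$ from \cref{Ass_drift}. Setting $\kappa\coloneqq 2/\overline{\Clsi}$, every exponential factor is therefore bounded by $\exp(-\kappa\sum_j h_j)$ over the corresponding index range.

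\textbf{The four terms.}
\begin{itemize}
\item \emph{Last term.} The term $c\tau(t_k)$ tends to zero because $t_k=\sum_{i<k}h_i\to\infty$ and $\tau(t)\to0$.
\item \emph{First term.} It is bounded by $\KL(\hat\mu_0|\pi_{\tau(0)})\exp(-\kappa(t_{k+1}-h_0))\to0$, since $\sum_k h_k=\infty$.
\item \emph{Second term (tracking error).} Reindex with $m=k-i$. Because $\tau$ is nonincreasing, $|\tau(t_m)-\tau(t_{m-1})|=\tau(t_{m-1})-\tau(t_m)$. The term is then bounded by
\[
c\sum_{m=1}^{k} e^{-\kappa (t_{k+1}-t_m)}\bigl(\tau(t_{m-1})-\tau(t_m)\bigr).
\]
This is a discrete convolution of a summable nonnegative sequence, with total sum $\le \tau(0)$ (telescoping), against weights that are at most $1$ and tend to $0$ for each fixed $m$ as $k\to\infty$. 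Split at an index $M$. For $m\le M$, the weights tend to zero uniformly in $m\le M$, so the head vanishes. For $m>M$, the tail is at most $\tau(t_M)$, which is small for large $M$. Dominated convergence, or this explicit $\epsilon/2$ argument, gives the limit zero.
\item \emph{Third term (discretization error).} The same argument applies with the summable sequence $h_m^2$ in place of the increments of $\tau$. Splitting at $M$ gives a head that vanishes as $k\to\infty$ and a tail bounded by $\sum_{m>M}h_m^2$, which is small.
\end{itemize}

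\textbf{Complexity with constant step size.} With $h_k=h$ the exponentials become $e^{-\kappa h j}$.
\begin{itemize}
\item The third term is bounded by a geometric series, $c h^2/(1-e^{-\kappa h})=\Oc(h)$. Hence $h=\Oc(\epsilon)$ is required.
\item The first term is $\le e^{-\kappa hk}\KL(\hat\mu_0|\pi_{\tau(0)})$, which needs $k\gtrsim h^{-1}\log(\epsilon^{-1})$.
\item The last term needs $\tau(t_k)=\Oc(\epsilon)$, i.e.\ $kh\ge T_1$, so $k\ge T_1/h=\Oc(T_1\epsilon^{-1})$.
\item For the tracking term, split at $m_1$ with $t_{m_1}\approx T_1$. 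The part with $m>m_1$ is at most $\tau(T_1)=\Oc(\epsilon)$. The part with $m\le m_1$ is at most $\tau(0)e^{-\kappa(t_{k+1}-T_1)}$, which is $\le\epsilon$ once $kh\ge T_1+\kappa^{-1}\log(\epsilon^{-1})$.
\end{itemize}
Collecting these requirements gives $k=\Oc(T_1\epsilon^{-1}+\log(\epsilon^{-1})h^{-1})$.

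\textbf{Main obstacle.} The delicate step is the index bookkeeping in \cref{thm discrete final bound}: the sums run over $k-i$ with exponentials involving $h_{k-j}$ and $t_{k-j}$, and these must be matched carefully to $t_{k+1}-t_m$. Once the convolution structure is exposed, the vanishing of the tracking and discretization terms is a standard Toeplitz-type lemma.
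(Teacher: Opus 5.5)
Your proposal is correct and follows the paper's proof essentially step for step. The paper likewise splits the tracking and discretization sums at an intermediate index $k'$ (your $M$). It bounds the head by the telescoped total ($\tau(0)$, resp.\ $\sum_i h_i^2$) times a vanishing exponential, bounds the tail by $\tau(t_{k'})$, resp.\ $\sum_{i>k'}h_i^2$, and handles the constant-step complexity with the same geometric sums. One minor point: the paper's proof starts the recursion at $\KL(\hat\mu_1|\pi_{\tau(t_0)})$, which is finite even for a point-mass initialization, whereas your first-term bound tacitly needs $\KL(\hat\mu_0|\pi_{\tau(0)})<\infty$; shifting your argument by one step removes this.
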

\begin{proof}
    See~\cref{proof:corr discrete convergence}.
\end{proof}
\begin{remark}[Feasibility of step sizes]
    The step-size conditions can be satisfied:
    Indeed, since $\dissi_\tau$ and $\lip_\tau$ are uniformly bounded, the quantity $\dissi_\tau/\lip^2_\tau$ is uniformly bounded away from zero.
    Therefore, it is possible to choose step sizes \( (h_k)_k \) such that $\sum_k h_k=\infty$.
    The square-summability-condition is satisfied if the step sizes decay sufficiently fast.
\end{remark}
\begin{remark}[Constant step sizes]
    For the constant-step-size setting in \cref{corr discrete convergence}, we obtain that $\lim_{h\rightarrow 0}\limsup_{k\rightarrow\infty}\KL(\hat{\mu}_{k}|\pi) = 0$.
    However, as expected, in general a nonzero bias remains for any fixed $\step > 0$.
\end{remark}
\begin{remark}[Path-design guidelines]\label{rmk:interpretation of results}
    Our theoretical results imply the following practical consequences:
    \begin{itemize}[nosep,leftmargin=0.7cm]
        \item \Cref{thm discrete final bound} provides design guidelines for the path $(p_\tau)_\tau$ and the annealing schedule $\tau$:
        The two main tools that lead to a small error term are small log-Sobolev constants and a fast-decreasing $\tau$.
        Of course, for $\tau=0$, the log-Sobolev constant is pre-determined by the target $\pi$.
        Therefore, we find two contradicting incentives which need to be balanced.
        On the one hand, $\tau$ should remain \emph{large} as long as possible to benefit from smaller log-Sobolev constants.\footnote{Consequently, annealing is useless whenever the family $p_\tau$ admits worse log-Sobolev constants for $\tau>0$ than for $\tau=0$, \cf~\cite{chehab2025provable}.}
        On the other hand, $\tau$ should converge to zero quickly to reach our actual target density.
        We will show in the numerical experiments that in well-designed annealing schemes (\textit{e.g.}, convolutional paths and paths based on the Moreau envelope of the potential), the family $p_\tau$ is so well-conditioned initially that annealing leads to significant benefits even when $\tau$ decreases quickly.
        \item For the design of the path $(p_\tau)_\tau$, \cref{lemma:dissip_implies_LSI} implies that one should aim for paths that result in small values of $\dissii_\tau,\dissiii_\tau,$ and $\lip_\tau$, and large values of $\dissi_\tau$. In addition to improved log-Sobolev constants and, thus, also fast convergence in the continuous-time setting, based on~\cref{lemma:discretization descent} this also allows for large step sizes. The numerical experiments confirm that this is indeed a significant advantage of certain paths.
    \end{itemize}
\end{remark}

\subsection{Paths Covered by our Framework}%
\label{ssec:paths}
We now show how several popular annealing schemes are covered by the presented analysis.
\subsubsection{Geometric-Tempering Path}%
\label{sssec:geometric tempering}
The geometric-tempering path is defined for all \( \tau \in [0, 1] \) as
\begin{equation}
    p_\tau \propto p_1^{\tau} p_0^{1-\tau},
\end{equation}
where $p_0$ is the target density and $p_1$ any (simple) initial density.
It has the convenient property that the potential of \( p_\tau \) is a linear combination of the potentials of \( p_1 \) and \( p_0 \), which makes it easy to implement.
The convergence of annealing based on this path is covered in \cite{chehab2025provable} under the assumption that \( U_0 \) and \( U_1 \) are differentiable, and that \( \nabla U_0\) and \( \nabla U_1 \) are Lipschitz-continuous and dissipative.
\begin{proposition}%
    \label{prop:geom_tempering_ass}
    If $U_0$ and $U_1$ have Lipschitz-continuous and dissipative gradients, then the geometric path $(p_\tau)_{\tau\in[0,1]}$ satisfies \Cref{Ass_drift}.
\end{proposition}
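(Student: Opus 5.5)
We verify each item of \cref{Ass_drift} directly from the representation
\begin{equation*}
    U_\tau = \tau U_1 + (1-\tau) U_0 + \log Z_\tau, \qquad Z_\tau = \int \exp\bigl(-\tau U_1 - (1-\tau)U_0\bigr)\dd x .
\end{equation*}
The gradient $\nabla U_\tau = \tau\nabla U_1 + (1-\tau)\nabla U_0$ is a convex combination, which handles smoothness and dissipativity. With $\lip_0,\lip_1$ the Lipschitz constants, $\nabla U_\tau$ is $\max(\lip_0,\lip_1)$-Lipschitz. For dissipativity with constants $(\dissi_i,\dissii_i,\dissiii_i)$ we get
\begin{equation*}
    \langle x,\nabla U_\tau(x)\rangle \geq \min(\dissi_0,\dissi_1)|x|^2 - \max(\dissii_0,\dissii_1)\1_{B_{\max(\dissiii_0,\dissiii_1)}(0)}(x),
\end{equation*}
using $\1_{B_{\dissiii_i}}\leq \1_{B_{\max\dissiii}}$. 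All constants are therefore uniform in $\tau\in[0,1]$.

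\textbf{Quadratic growth of the time derivative.} We have $\partial_\tau U_\tau(x) = U_1(x)-U_0(x) + \partial_\tau \log Z_\tau$. Lipschitz gradients give $|U_i(x)|\leq c(|x|^2+1)$. For the normalizing term,
\begin{equation*}
    \partial_\tau\log Z_\tau = -\int (U_1-U_0)\,\dd\pi_\tau .
\end{equation*}
Differentiation under the integral is justified by dominated convergence. Dissipativity of $\nabla U_i$ gives a lower bound $U_i(x)\geq \tfrac{\dissi_i}{2}|x|^2 - c$, obtained by integrating $\langle x,\nabla U_i\rangle$ along rays. Hence $\exp(-\tau U_1-(1-\tau)U_0)\leq C e^{-\dissi|x|^2/2}$ uniformly in $\tau$, which dominates $|U_1-U_0|e^{-\ldots}$. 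The same Gaussian domination, together with $Z_\tau$ being bounded below uniformly (by continuity in $\tau$ on the compact set $[0,1]$, or by the upper bound $U_\tau\leq c(|x|^2+1)$), shows that the second moments of $\pi_\tau$ are uniformly bounded. So $|\partial_\tau\log Z_\tau|\leq c$, and the quadratic growth bound follows.

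\textbf{Log-Sobolev inequality.} This is the main obstacle, since we only have $C^1$ potentials with Lipschitz gradients, whereas \cref{lemma:dissip_implies_LSI} is stated for twice continuously differentiable potentials. My first attempt is to exploit that $\nabla U_\tau$ is Lipschitz, so $U_\tau\in W^{2,\infty}_{\mathrm{loc}}$ with $\nabla^2 U_\tau\succeq -\lip I$ almost everywhere. Either the proof of \cref{lemma:dissip_implies_LSI} goes through verbatim in this setting, or we mollify: $U_\tau * \rho_\epsilon$ is smooth and its gradient has the same Lipschitz constant. Its dissipativity constants change by at most $O(\epsilon)$, using $|\nabla U_\tau*\rho_\epsilon - \nabla U_\tau|\leq \lip\epsilon$. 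The density ratio $e^{-(U_\tau*\rho_\epsilon - U_\tau)}$ is bounded by $e^{\lip\epsilon^2}$-type factors up to normalization. Holley--Stroock perturbation then transfers the uniform LSI constant from \cref{lemma:dissip_implies_LSI} back to $\pi_\tau$.

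The key point is that the constant delivered by \cref{lemma:dissip_implies_LSI} depends only on $(\lip,\dissi,\dissii,\dissiii)$ and $d$, all of which were shown above to be uniform in $\tau$. This gives $\sup_\tau \Clsi(\tau)\leq\overline{\Clsi}<\infty$.
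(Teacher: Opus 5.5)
Your proof is correct, and it is more complete than the paper's. The paper's proof has two lines. Writing $U_\tau=(1-\tau)U_0+\tau U_1+c$, it notes that uniform Lipschitzness and dissipativity of $\nabla U_\tau$ are inherited, and that $\partial_\tau U_\tau=U_1-U_0$ grows at most quadratically by Lipschitzness.

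The paper's argument has two omissions:
\begin{itemize}
\item It treats the additive constant as $\tau$-independent. This matches its convention, used in the proof of \cref{lem:KL between dynamic and moving target}, of regarding $U_\tau$ as the unnormalized potential and handling $\partial_\tau Z_\tau/Z_\tau$ separately.
\item It does not verify the uniform LSI item at all. Yet \cref{lemma:dissip_implies_LSI} covers only $C^2$ potentials, while the proposition assumes merely Lipschitz gradients.
\end{itemize}

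You match the paper on smoothness and dissipativity, with explicit $\tau$-uniform constants, and your route adds two things:
\begin{itemize}
\item You bound $\partial_\tau\log Z_\tau=-\int(U_1-U_0)\,\dd\pi_\tau$ via Gaussian domination and uniform second moments. The growth bound then holds for $-\log p_\tau$ exactly as defined in \cref{Ass_drift}.
\item Your mollification-plus-Holley--Stroock step genuinely closes the LSI gap. The gradient $\nabla(U_\tau*\rho_\epsilon)$ stays $\lip$-Lipschitz, and the smoothed potentials are dissipative with $\tau$-uniform constants. So \cref{lemma:dissip_implies_LSI} applies to them, and the bounded perturbation costs only a factor $e^{\lip\epsilon^2}$. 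You may simply fix $\epsilon=1$.
\end{itemize}

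Two details should be stated explicitly:
\begin{itemize}
\item The uniform bound $\sup_x|U_\tau*\rho_\epsilon(x)-U_\tau(x)|\le\lip\epsilon^2/2$ requires an even mollifier. Otherwise the first-order term $\langle\nabla U_\tau(x),\int y\,\rho_\epsilon(y)\,\dd y\rangle$ grows linearly in $|x|$, and Holley--Stroock no longer applies.
\item The ray integration giving $U_i(x)\ge\frac{\dissi_i}{2}|x|^2-c$ needs care inside $B_{\dissiii_i}(0)$. Dividing the dissipativity inequality by $s$ leaves a non-integrable $\dissii_i/s$ term near $s=0$. Bound that part of the segment with the Lipschitz gradient instead.
\end{itemize}
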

\begin{proof}
    Since $U_\tau = (1-\tau) U_0 + \tau U_1 + c$, $U_\tau$ inherits Lipschitzness and dissipativity of its gradient uniformly in $\tau$. 
    In turn, Lipschitzness implies that $\partial_\tau U_\tau = U_1 - U_0$ satisfies the quadratic growth bound.
\end{proof}

\subsubsection{Dilation}%
\label{ssec:dilation}
\citet{chehab2024practical} introduced the \emph{dilation} path, which is defined for \( \tau \in [0, 1) \) as
\begin{equation}
    p_\tau =\frac{1}{(1-\tau)^{d/2}}p\left(\frac{\cdot}{\sqrt{1 - \tau}}\right).
    \label{eq:dilation path}
\end{equation}
Their work shows empirically that this path may be preferred in certain configurations, but they do not verify convergence of the scheme.
In the following proposition, we verify that the dilation path conforms to our assumptions and, in turn, leads to a convergent scheme under the assumption that  $U$ is differentiable, has a Lipschitz-continuous and dissipative gradient.
\begin{proposition}%
    \label{prop:dilation_ass}
    If $U$ has a Lipschitz-continuous and dissipative gradient, then the dilation path \( (p_\tau)_{\tau \in [0,\overline{\tau}]} \) \cref{eq:dilation path} satisfies~\cref{Ass_drift} for any \(\overline{\tau} \in (0, 1) \).
\end{proposition}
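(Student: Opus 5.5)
We verify each item of \cref{Ass_drift} by an explicit computation of the dilated potential. Write $s_\tau \coloneqq (1-\tau)^{-1/2}$, which for $\tau\in[0,\overline{\tau}]$ lies in $[1, (1-\overline{\tau})^{-1/2}]$. Up to an additive constant,
\begin{equation}
    U_\tau(x) = U(s_\tau x) + \tfrac{d}{2}\log(1-\tau), \qquad \nabla U_\tau(x) = s_\tau \nabla U(s_\tau x).
\end{equation}
The plan is to read off smoothness and dissipativity from this formula, then treat the $\tau$-derivative, and finally obtain the log-Sobolev inequality. For $\tau>\overline{\tau}$ we take the path to be constant, $p_\tau = p_{\overline{\tau}}$, so all suprema range over the compact interval $[0,\overline{\tau}]$.

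\emph{Smoothness and dissipativity.} If $\nabla U$ is $\lip$-Lipschitz, then $|\nabla U_\tau(x)-\nabla U_\tau(y)|\le s_\tau^2\lip|x-y|$. Hence $\lip_\tau\le \lip/(1-\overline{\tau})$ uniformly.

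For dissipativity with constants $(\dissi,\dissii,\dissiii)$, we have $\langle x,\nabla U_\tau(x)\rangle = \langle s_\tau x,\nabla U(s_\tau x)\rangle$. This is at least $\dissi s_\tau^2|x|^2 - \dissii\1_{B_{\dissiii}(0)}(s_\tau x)$. Since $s_\tau\ge1$, this is in turn at least $\dissi|x|^2 - \dissii\1_{B_{\dissiii}(0)}(x)$. So the constants $\dissi_\tau\ge\dissi$, $\dissii_\tau=\dissii$, $\dissiii_\tau\le\dissiii$ hold uniformly.

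\emph{Time derivative.} Differentiating gives
\begin{equation}
    \partial_\tau U_\tau(x) = \dot s_\tau\langle \nabla U(s_\tau x), x\rangle - \tfrac{d}{2(1-\tau)},
\end{equation}
with $\dot s_\tau = \tfrac12(1-\tau)^{-3/2}$ bounded on $[0,\overline{\tau}]$. Lipschitz continuity gives $|\nabla U(y)|\le |\nabla U(0)|+\lip|y|$. Hence $|\partial_\tau U_\tau(x)|\le C(|x|^2+|x|+1)\le c(|x|^2+1)$, with $c$ depending only on $\overline{\tau}$, $\lip$, $|\nabla U(0)|$ and $d$.

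\emph{Log-Sobolev inequality.} This is the step I expect to be the main obstacle, since \cref{lemma:dissip_implies_LSI} is stated for twice differentiable potentials. I would first try a direct scaling argument, avoiding any regularity beyond what is assumed. The base measure $\pi$ satisfies an \gls{lsi}: this is either assumed as part of the hypothesis at $\tau=0$, or obtained from \cref{lemma:dissip_implies_LSI} when $U\in C^2$. Now $\pi_\tau$ is the pushforward of $\pi$ under the map $y\mapsto y/s_\tau=\sqrt{1-\tau}\,y$. Substituting $f(\cdot)=g(\cdot/s_\tau)$ in \eqref{eq:LSI} for $\pi$ shows that $\pi_\tau$ satisfies the \gls{lsi} with constant $(1-\tau)\Clsi(\pi)\le\Clsi(\pi)$. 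This gives $\overline{\Clsi}=\Clsi(\pi)$. If instead one only assumes Lipschitz and dissipative gradients without an \gls{lsi} for $\pi$, I would invoke \cref{lemma:dissip_implies_LSI} uniformly, using the uniform constants derived above. That route requires a $C^2$ hypothesis, which I would then state explicitly.
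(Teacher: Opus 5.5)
Your verification of smoothness, dissipativity and the quadratic growth of $\partial_\tau U_\tau$ is the same computation as the paper's. It uses the same formula $\nabla U_\tau(x)=(1-\tau)^{-1/2}\nabla U(x/\sqrt{1-\tau})$, the same uniform Lipschitz bound $\lip/(1-\overline{\tau})$, and the same observation that the scaling factor is at least one. That observation both enlarges the quadratic term and shrinks the ball in the indicator. The growth bound likewise comes from $|\nabla U(y)|\le|\nabla U(0)|+\lip|y|$.

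Where you go beyond the paper is the log-Sobolev item, which the paper's proof does not address at all. Your pushforward argument is correct. Since $\pi_\tau$ is the law of $\sqrt{1-\tau}\,Y$ with $Y\sim\pi$, testing the inequality for $\pi$ with $g(\sqrt{1-\tau}\,\cdot)$ gives $\Clsi(\tau)\le(1-\tau)\Clsi(0)$, hence $\overline{\Clsi}=\Clsi(0)$. This needs no further regularity once $\pi$ itself satisfies an LSI, and it is sharper than applying a uniform dissipativity-to-LSI bound.

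That LSI for $\pi$ is not literally among the stated hypotheses. However, you do not need to add a $C^2$ assumption to obtain it. Mollifying $U$ with an even kernel supported in $B_\epsilon(0)$ keeps the gradient $\lip$-Lipschitz and keeps dissipativity, with slightly perturbed constants. It also changes $U$ by at most $\lip\epsilon^2/2$ in sup norm. So the paper's $C^2$ lemma applied to the mollified potential, followed by Holley--Stroock, yields an LSI for $\pi$.

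Finally, the constant extension for $\tau>\overline{\tau}$ is unnecessary. It creates a kink at $\overline{\tau}$ where $\partial_\tau U_\tau$ does not exist, so it is cleaner to read the assumption on $[0,\overline{\tau}]$, as the paper does.
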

\begin{proof}
    Since $U_\tau=U(\cdot/\sqrt{1-\tau})+\frac d2\log(1-\tau)+c$,
    \begin{equation}
        \nabla U_\tau = \frac{1}{\sqrt{1-\tau}}\nabla U\left(\frac{\cdot}{\sqrt{1-\tau}}\right),
    \end{equation}
    and for $\tau\le \overline{\tau}$ we obtain a uniform Lipschitz bound $|\nabla U_\tau(x)-\nabla U_\tau(y)|\le \frac{L}{1-\overline{\tau}}|x-y|$.
    Dissipativity follows similarly as
    \begin{equation}
        \begin{aligned}
            \langle x,\nabla U_\tau(x)\rangle 
            &= \biggl\langle \frac{x}{\sqrt{1-\tau}},\nabla U\biggl(\frac{x}{\sqrt{1-\tau}}\biggr)\biggr\rangle\\
            &\geq \frac{\dissi_\tau}{1-\tau}|x|^2 - \dissii_\tau \1_{B_{\dissiii_\tau}(0)}\biggr(\frac{x}{\sqrt{1-\tau}}\biggr)\\ 
            &\geq \dissi_\tau|x|^2 - \dissii_\tau \1_{B_{\dissiii_\tau}(0)}(x)
        \end{aligned}
    \end{equation}
    Finally, since 
    \begin{equation}
        \partial_\tau U_\tau(x) = \frac{1}{2(1-\tau)^{3/2}}\Bigl\langle \nabla U\Bigl( \frac{x}{\sqrt{1 - \tau}} \Bigr), x\Bigr\rangle - \frac{d}{2(1-\tau)}
    \end{equation}
    the growth bound on $[0,\overline{\tau}]$ follows from Lipschitzness.
\end{proof}

\subsubsection{Diffusion at Absolute Zero}
\label{sssec:daz}
\citet{habring2025diffusionjournal} consider a path of distributions that is defined via the Moreau envelope of the potential, with $\tau$ being the Moreau parameter:
\begin{equation}
    p_\tau = \frac{\exp\bigl(-M_{U}^\tau(\,\cdot\,)\bigr)}{\int \exp\bigl(-M_{U}^\tau(y)\bigr)\dd y}.
    \label{eq:daz path}
\end{equation}
They termed their method \gls{daz} since this potential can be seen as the zero-temperature (in the Boltzmann sense) limit of the corresponding diffusion potentials \citep[Section 4.3]{habring2025diffusionjournal}.

They analyze this path under the assumptions that the potential $U$ is differentiable, has a Lipschitz-continuous and dissipative gradient, and is \(\alpha \)-weakly convex and convex outside a ball.
They show that, under these assumptions, for $\tau < \bigl(\frac{1}{\alpha}\wedge \frac{1}{\lip} \bigr)$, the path $p_\tau$ satisfies~\cref{Ass_drift} so that the presented results hold.

\begin{proposition}\label{prop:daz_ass}
    If  $U$ is $\alpha$-weakly convex and differentiable, $\nabla U$ is $L$-Lipschitz, dissipative, and $U$ is convex outside a ball, then the path \( (p_\tau)_{\tau \in [0, \bar{\tau}]} \) \cref{eq:daz path} satisfies \Cref{Ass_drift} on $[0,\overline{\tau}]$ for any $\overline{\tau}\in(0,1/\alpha\wedge 1/\lip)$.
\end{proposition}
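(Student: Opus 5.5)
We verify the items of \cref{Ass_drift} one by one, relying on the standard calculus of the Moreau envelope. Since $U$ is $\alpha$-weakly convex and $\tau<1/\alpha$, the objective $y\mapsto U(y)+\frac{1}{2\tau}|x-y|^2$ is strongly convex. Hence the proximal point $\mathrm{prox}_{\tau U}(x)$ is unique, $M_U^\tau$ is differentiable, and
\begin{equation*}
    \nabla M_U^\tau(x)=\tfrac{1}{\tau}\bigl(x-\mathrm{prox}_{\tau U}(x)\bigr)=\nabla U\bigl(\mathrm{prox}_{\tau U}(x)\bigr).
\end{equation*}
Normalization only contributes an additive constant, so $U_\tau=M_U^\tau+c_\tau$.

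\emph{Smoothness.} The identity $\nabla M_U^\tau=\nabla U\circ \mathrm{prox}_{\tau U}$ together with Lipschitzness of the proximal map gives a uniform bound on the Lipschitz constant. From the optimality condition $p+\tau\nabla U(p)=x$ we get $|p_x-p_y|\le \frac{1}{1-\tau\lip}|x-y|$, which is where $\tau<1/\lip$ is used. Alternatively, $(1-\tau\alpha)^{-1}$-Lipschitzness follows from weak convexity. Either way, $\lip_\tau\le \lip/(1-\overline\tau\lip)$ uniformly on $[0,\overline\tau]$.

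\emph{Time derivative.} The envelope theorem (Danskin) gives
\begin{equation*}
    \partial_\tau M_U^\tau(x)=-\tfrac{1}{2\tau^2}|x-p_x|^2=-\tfrac12|\nabla U(p_x)|^2.
\end{equation*}
With $|\nabla U(p)|\le |\nabla U(0)|+\lip|p|$ and $|p_x|\le C(|x|+1)$, this yields the quadratic growth bound. The normalizing constant $\log\int e^{-M^\tau_U}$ also depends on $\tau$, but its derivative is the expectation of the same quantity under $\pi_\tau$. That expectation is uniformly bounded once uniform second moments follow from dissipativity. In any case, only $\partial_\tau$ up to constants enters, and I would check that the paper's usage tolerates this.

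\emph{Dissipativity (main obstacle).} Write $x=p+\tau\nabla U(p)$. Then
\begin{equation*}
    \langle x,\nabla U(p)\rangle=\langle p,\nabla U(p)\rangle+\tau|\nabla U(p)|^2\ge \dissi|p|^2-\dissii\1_{B_\dissiii}(p).
\end{equation*}
This must be converted into a bound $\ge \dissi'|x|^2-\dissii'\1_{B_{R'}}(x)$. Since $|x|\le (1+\tau\lip)|p|+\tau|\nabla U(0)|$, we get $|p|\ge (|x|-\tau|\nabla U(0)|)/(1+\tau\lip)$. This gives $\dissi|p|^2\ge \frac{\dissi}{2(1+\overline\tau\lip)^2}|x|^2-C$, with $C$ uniform in $\tau$. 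On a large ball, the bounded negative part can be absorbed by enlarging $\dissii'$ and $R'$. Outside that ball, $p$ lies outside $B_\dissiii$, so the inequality holds with a halved quadratic constant. The only subtlety is the indicator form of the definition: outside a ball we need the bound with no constant term. Dropping half of $\dissi|p|^2$ to absorb $C$ once $|x|$ is large achieves this. Convexity outside a ball is what the cited work uses for this step, or for the log-Sobolev bound. I expect to need it only if the above absorption fails.

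\emph{LSI.} Since $U_\tau$ is only $C^1$, \cref{lemma:dissip_implies_LSI} is not directly applicable. I would first attempt it via a perturbation argument. Convexity outside a ball makes $M_U^\tau$ convex outside a (uniform) ball, because the proximal map of a function convex outside a ball preserves this far away. It also makes $M_U^\tau$ strongly convex there thanks to dissipativity. Then $U_\tau$ equals a uniformly strongly convex function plus a bounded perturbation. Bakry--Émery combined with Holley--Stroock then gives a uniform $\overline{\Clsi}$, since all constants are uniform on $[0,\overline\tau]$. Uniformity of the ball radius and of the perturbation bound in $\tau$ is the point I expect to require the most care.
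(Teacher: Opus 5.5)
Your verification of smoothness, the growth bound and dissipativity is sound. The paper's proof only cites \citet{habring2025diffusionjournal}: Remark~4.17 for the Lipschitz bound $\lip/(1-\tau\lip)$ and Lemma~4.13 for dissipativity. For the growth bound it uses the Hamilton--Jacobi equation $\partial_\tau M_U^\tau=-\frac12|\nabla M_U^\tau|^2$, which is exactly your Danskin identity. Your derivations from the optimality condition $x=p_x+\tau\nabla U(p_x)$ are a self-contained replacement for these citations. In particular, dropping $\tau|\nabla U(p_x)|^2\ge 0$ and using $|p_x|\ge(|x|-\tau|\nabla U(0)|)/(1+\tau\lip)$ gives dissipativity with constants uniform on $[0,\overline{\tau}]$, so convexity outside a ball is not needed for that item. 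Your remark on $\partial_\tau\log Z_\tau$ is a fair point the paper glosses over, and your argument for it is fine.

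The gap is in the LSI item. The claim that dissipativity makes $M_U^\tau$ strongly convex outside a ball is false. So is the decomposition ``uniformly strongly convex plus bounded'' that Bakry--\'Emery with Holley--Stroock needs. A counterexample already occurs at $\tau=0$, where $M_U^0=U$. In $d=1$, let $U'$ be odd, $\lip$-Lipschitz and nondecreasing, so $U$ is convex. Take $U'(x)=\dissi x$ near $0$. On $(0,\infty)$, let $U'$ alternately rise with slope $\lip>\dissi$ and then stay constant on a plateau $[s_k,s_k+\ell_k]$ ending where it meets the line $\dissi x$, with $\ell_k\to\infty$. This $U$ satisfies every hypothesis of the proposition, and $xU'(x)\ge\dissi x^2$. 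But if $U=V+W$ with $V''\ge\kappa>0$, then $W''\le-\kappa$ on each plateau, so $\sup W-\inf W\ge\kappa\ell_k^2/8\to\infty$. The same happens for every $\tau>0$, since the proximal map sends plateaus to intervals of the same length on which $M_U^\tau$ is affine. So the decomposition you need does not exist, even though an LSI does hold. Note that the paper's own proof does not verify the LSI item at all; it implicitly defers to the cited work. Moreover, \cref{lemma:dissip_implies_LSI} requires a $C^2$ potential, whereas $M_U^\tau$ is only $C^{1,1}$.

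The fix needs no convexity. Let $V_\epsilon=U_\tau*\rho_\epsilon$, with $\rho_\epsilon$ a symmetric mollifier supported in $B_\epsilon(0)$ and $\epsilon\le 1$. Then $V_\epsilon$ is smooth and $\nabla V_\epsilon$ is $\lip_\tau$-Lipschitz. $V_\epsilon$ is also dissipative with constants uniform in $\epsilon$ and $\tau$: the shift costs only $\epsilon|\nabla U_\tau(x-y)|\le c\,\epsilon(|x|+1)$, which is absorbed exactly as in your dissipativity step. A second-order Taylor expansion plus the symmetry of $\rho_\epsilon$ gives $|V_\epsilon-U_\tau|\le\lip_\tau\epsilon^2/2$. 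Hence \cref{lemma:dissip_implies_LSI} yields an LSI for $e^{-V_\epsilon}$ with a constant depending only on $\lip,\dissi,\dissii,\dissiii,d$. Holley--Stroock then transfers it to $\pi_\tau$ with a factor $e^{\lip_\tau\epsilon^2}\to 1$, giving the uniform bound $\overline{\Clsi}$.
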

\begin{proof}
    For $\tau < (1/\alpha\wedge 1/\lip)$, the minimizing argument in \cref{eq:moreau} is unique, the Moreau envelope is differentiable and its gradient is globally Lipschitz with constant bounded by $\lip/(1-\tau\lip)$ \citep[Remark 4.17]{habring2025diffusionjournal}.
    Dissipativity is also maintained \citep[Lemma 4.13]{habring2025diffusionjournal}.
    Since the Moreau envelope satisfies the Hamilton--Jacobi equation $\partial_\tau M_U^\tau = -\frac{1}{2}|\nabla M_U^\tau|^2$, the quadratic growth-bound is implied by Lipschitzness.
\end{proof}
This result significantly extends the analysis of~\citet{habring2025diffusion}:
They only showed the convergence of the intermediate sampling distributions to the corresponding intermediate target, whereas this result shows that the annealing procedure \emph{as a whole} converges to the ultimate target.

\subsubsection{Convolutional Path}%
\label{sssec:convolutional path}
\citet{cordero2025non} consider the \emph{variance preserving} convolutional path
\begin{equation}\label{eq convolution path}
    p_\tau = \frac{p\left(\frac{\cdot}{\sqrt{1-\tau}}\right)}{(1-\tau)^{d/2}}*\frac{p_1\left(\frac{\cdot}{\sqrt{\tau}}\right)}{\tau^{d/2}},
\end{equation}
where $p_1 = \Nc(0,\id)$ is a standard Gaussian and $p=p_0$ is the target.
This path is used in annealed Langevin sampling~\citep{song2019generative} and the vast literature on diffusion models.
With $X\sim p_0$ and $Z\sim p_1$ independent, its distribution corresponds to the law of the random variable
\begin{equation}
    X_\tau = \sqrt{1-\tau}X + \sqrt{\tau} Z.
\end{equation}

\citet{cordero2025non} show (Lemma~3.2) that if $U$ is twice continuously differentiable, admits a Lipschitz-continuous gradient, and is strongly convex outside a ball,
then $U_\tau$ admits a uniformly Lipschitz-continuous and dissipative gradient.
In the following we will prove that the remaining assumptions of this paper are satisfied.
We begin by deriving the \gls{sde} characterizing $(X_\tau)_\tau$.
\begin{lemma}\label{lemma convolution path SDE}
    The density $p_\tau$ from \cref{eq convolution path} is precisely the density of the solution of the \gls{sde}
    \begin{equation}
        \dd X_\tau = -\frac{1}{2(1-\tau)} X_\tau\dd \tau + \frac{1}{\sqrt{1-\tau}}\dd W_\tau,
    \end{equation}
    with initial condition \( X_0 \sim \pi_0 \).
\end{lemma}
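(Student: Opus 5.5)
The plan is to solve the linear SDE explicitly and check that its marginal law at time \(\tau\) equals the law of \(\sqrt{1-\tau}X+\sqrt{\tau}Z\), whose density is exactly \cref{eq convolution path}. The SDE is of Ornstein--Uhlenbeck type with time-dependent coefficients: the drift is linear in \(X_\tau\) and the diffusion coefficient does not depend on the state. Hence it has a unique strong solution on \([0,1)\), given by variation of constants.

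The integrating factor is
\begin{equation}
\Phi(\tau)=\exp\Bigl(\int_0^\tau \tfrac{1}{2(1-s)}\,\dd s\Bigr)=(1-\tau)^{-1/2}.
\end{equation}
By \ito's formula,
\begin{equation}
\dd\bigl((1-\tau)^{-1/2}X_\tau\bigr)=(1-\tau)^{-1/2}\cdot\tfrac{1}{\sqrt{1-\tau}}\,\dd W_\tau=\tfrac{1}{1-\tau}\,\dd W_\tau .
\end{equation}
Integrating from \(0\) to \(\tau\) gives
\begin{equation}
X_\tau=\sqrt{1-\tau}\,X_0+\sqrt{1-\tau}\int_0^\tau \tfrac{1}{1-s}\,\dd W_s .
\end{equation}

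The stochastic integral is a Wiener integral of a deterministic function. It is therefore centred Gaussian, independent of \(X_0\), and by the \ito{} isometry its covariance is
\begin{equation}
\int_0^\tau (1-s)^{-2}\,\dd s\,\id=\Bigl(\tfrac{1}{1-\tau}-1\Bigr)\id=\tfrac{\tau}{1-\tau}\,\id .
\end{equation}
Consequently the noise term \(\sqrt{1-\tau}\int_0^\tau (1-s)^{-1}\,\dd W_s\) has law \(\Nc(0,\tau\,\id)\), which is the law of \(\sqrt{\tau}Z\). It follows that \(X_\tau\overset{d}{=}\sqrt{1-\tau}X+\sqrt{\tau}Z\) with \(X\sim\pi_0\) and \(Z\sim\Nc(0,\id)\) independent.

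Finally, the density of a sum of independent random variables is the convolution of their densities. The density of \(\sqrt{1-\tau}X\) is \((1-\tau)^{-d/2}p(\cdot/\sqrt{1-\tau})\), and the density of \(\sqrt{\tau}Z\) is \(\tau^{-d/2}p_1(\cdot/\sqrt{\tau})\). Their convolution is exactly \cref{eq convolution path}. For \(\tau=0\) the statement is trivial by the initial condition.

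The only delicate point is the range of \(\tau\): the coefficients blow up as \(\tau\to1\), so the argument should be restricted to \(\tau\in[0,1)\), where the coefficients are locally bounded and continuous. Nothing here is a genuine obstacle; the computation is routine.
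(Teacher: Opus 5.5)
Your proof is correct and follows the same route as the paper. Both solve the linear Ornstein--Uhlenbeck-type SDE with an integrating factor via It\^{o}'s formula, show that the Wiener integral is centred Gaussian with covariance $\tau$ times the identity after rescaling, and conclude $X_\tau \overset{d}{=} \sqrt{1-\tau}\,X_0+\sqrt{\tau}\,Z$, whose density is the convolution in \cref{eq convolution path}. Two points you make explicit are left implicit in the paper: the Wiener integral is independent of $X_0$, which justifies the convolution, and $\tau$ must be restricted to $[0,1)$ because the coefficients blow up at $\tau=1$.
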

\begin{proof}
See~\cref{proof:lemma convolution path SDE}.
\end{proof}
Based on this result, the following lemma formally shows that the quadratic growth bounds are satisfied.
\begin{lemma}
    \label{lemma:convolutional path growth}
    There exists $c>0$ such that the density \cref{eq convolution path} satisfies $|\partial_\tau  U_\tau(x)| \leq c(|x|^2+1)$ for $\tau\in[0,1]$.
\end{lemma}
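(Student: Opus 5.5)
Using \cref{lemma convolution path SDE}, the density $p_\tau$ satisfies the Fokker--Planck equation of that \gls{sde}, namely
\begin{equation}
    \partial_\tau p_\tau = \frac{1}{2(1-\tau)}\div(x\, p_\tau) + \frac{1}{2(1-\tau)}\Delta p_\tau .
\end{equation}
Dividing by $p_\tau$ and writing $U_\tau = -\log p_\tau$ (so $\nabla p_\tau = -p_\tau\nabla U_\tau$ and $\Delta p_\tau/p_\tau = |\nabla U_\tau|^2 - \Delta U_\tau$) gives the pointwise identity
\begin{equation}
    \partial_\tau U_\tau(x) = -\frac{1}{2(1-\tau)}\Bigl( d - \langle x,\nabla U_\tau(x)\rangle + |\nabla U_\tau(x)|^2 - \Delta U_\tau(x)\Bigr).
\end{equation}
The plan is then to bound each term by $c(|x|^2+1)$. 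By the uniform Lipschitz bound from Lemma~3.2 of \citet{cordero2025non}, $|\nabla U_\tau(x)|\le |\nabla U_\tau(0)| + \lip|x|$. Here $|\nabla U_\tau(0)|$ is bounded uniformly, since $\nabla U_\tau(0) = -\E[\nabla\log p_\tau(X_\tau)]$ plus a Lipschitz correction, and $\E[\nabla \log p_\tau(X_\tau)]=0$ with uniformly bounded second moments. Hence $|\nabla U_\tau|^2$ and $\langle x,\nabla U_\tau\rangle$ are $\Oc(|x|^2+1)$. Moreover $|\Delta U_\tau|\le d\lip$ wherever the Hessian exists, and $U_\tau$ is smooth for $\tau>0$ by the Gaussian convolution.

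The prefactor $\frac{1}{1-\tau}$ blows up at $\tau=1$, which is the main obstacle. To handle it, I would rewrite everything near $\tau=1$ via the Tweedie representation. From \cref{eq convolution path},
\begin{equation}
    \nabla U_\tau(x) = \frac{x - \sqrt{1-\tau}\,\E[X\mid X_\tau=x]}{\tau}.
\end{equation}
Substituting this, the combination $d-\langle x,\nabla U_\tau\rangle+|\nabla U_\tau|^2-\Delta U_\tau$ vanishes exactly for the Gaussian ($\tau=1$), and its deviation is $\Oc(\sqrt{1-\tau})$ times posterior moments of $X$ given $X_\tau=x$. I would then bound these posterior moments by $c(1+|x|)$ using the Lipschitz/dissipativity structure, for instance via the Lipschitz bound on $\nabla U_\tau$ applied to the expression above. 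This still leaves a factor $(1-\tau)^{-1/2}$, so there are two options. The first, which I would try first, is to expand $\E[X\mid X_\tau=x]$ to first order in $\sqrt{1-\tau}$, where the first-order term cancels using $\E X=0$-type centering; alternatively, one absorbs it by centering $U_\tau$ with an additive constant. The second is to reparametrize, noting that the paper only needs $|\dot\tau|\,|\partial_\tau U_\tau|$ to be controlled. The stated claim requires the first, so I would carry out the second-order expansion. This yields $\partial_\tau U_\tau$ expressed through $\mathrm{Cov}(X\mid X_\tau=x)$ and $\E[X\mid X_\tau=x]$, both of which are bounded polynomially in $|x|$ of degree at most two.

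On $[0,1-\delta]$ the direct bound from the first step suffices. Near $\tau=0$, the Lipschitz constant of the target controls $\Delta U_\tau$, again including the non-smooth case via the weak Hessian. Combining the two regimes gives a uniform constant $c$.
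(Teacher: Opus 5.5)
You reproduce the paper's proof exactly in your first step: the Fokker--Planck equation of the SDE, division by $p_\tau$, and the pointwise identity for $\partial_\tau U_\tau$. The paper then just invokes the uniform Hessian bound of Lemma~3.2 in \citet{cordero2025non} and never deals with the prefactor $\frac{1}{2(1-\tau)}$, so as written it only gives the estimate on $[0,\bar\tau]$ with $\bar\tau<1$. You are right that the endpoint $\tau=1$ is the crux, and your bound on $|\nabla U_\tau(0)|$ via $\E[\nabla\log p_\tau(X_\tau)]=0$ fills a small omission.

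Your repair, however, has a genuine gap: the singular term does not cancel. Write $s=\sqrt{1-\tau}$, $\mu_\tau(x)=\E[X\mid X_\tau=x]$ and $\Sigma_\tau(x)=\mathrm{Cov}(X\mid X_\tau=x)$. Your Tweedie substitution, together with $\nabla\mu_\tau=\frac{s}{\tau}\Sigma_\tau$, gives exactly
\begin{equation*}
    \partial_\tau U_\tau(x)=\frac{d}{2\tau}-\frac{|x|^2+|\mu_\tau(x)|^2+\mathrm{tr}\,\Sigma_\tau(x)}{2\tau^2}+\frac{(1+s^2)\langle x,\mu_\tau(x)\rangle}{2s\tau^2}.
\end{equation*}
Since $\mu_\tau(x)\to\E_\pi X$ as $\tau\to1$, the leading singular part is $\frac{\langle x,\E_\pi X\rangle}{2\sqrt{1-\tau}}$. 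It is linear in $x$, so it cannot be absorbed into an $x$-independent additive constant. It vanishes only if $\E_\pi X=0$, which is not a hypothesis.

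This is not an artifact of your method. Take $\pi=\Nc(m,\Sigma)$, which satisfies all assumptions of \cref{prop:convolution_ass}. Then $p_\tau=\Nc(\sqrt{1-\tau}\,m,C_\tau)$ with $C_\tau=(1-\tau)\Sigma+\tau\id$, and a direct computation gives $\partial_\tau U_\tau(x)=\frac{\langle C_\tau^{-1}m,\,x-\sqrt{1-\tau}\,m\rangle}{2\sqrt{1-\tau}}+\Oc(|x|^2+1)$. At $x=m$ this blows up as $\tau\to1$ whenever $m\neq0$.

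So the bound on all of $[0,1]$ cannot be proved as stated. You must do one of the following:
\begin{itemize}
\item restrict to $[0,\bar\tau]$ with $\bar\tau<1$, where your first step suffices;
\item add the assumption $\E_\pi X=0$;
\item prove a bound with an integrable singularity in $\tau$ and check that $|\dot\tau(t)|/\sqrt{1-\tau(t)}$ is integrable for the schedule (your second option).
\end{itemize}

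In the centered case there is a further issue. The expansion of $\mu_\tau(x)$ in $\sqrt{1-\tau}$ is not uniform in $x$. The Lipschitz bound only yields $\|\Sigma_\tau(x)\|\lesssim(1-\tau)^{-1}$, through $\nabla^2U_\tau=\tau^{-1}\bigl(\id-\frac{1-\tau}{\tau}\Sigma_\tau\bigr)$. You would need a uniform posterior-covariance bound, e.g.\ by writing $U$ as strongly convex plus a bounded perturbation and applying Holley--Stroock to the Gaussian-tilted posterior. You would then integrate $\nabla\mu_\tau=\frac{\sqrt{1-\tau}}{\tau}\Sigma_\tau$ starting from $x=0$, where $\mu_\tau(0)=\Oc(1-\tau)$ once $\E_\pi X=0$.
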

\begin{proof}
    See~\cref{proof:lemma:convolutional path growth}.
\end{proof}
This result, paired with the results by \citet{cordero2025non}, immediately implies that this path is covered by our analysis.
\begin{proposition}%
    \label{prop:convolution_ass}
    If $U$ is twice continuously differentiable, has a Lipschitz gradient and is strongly convex outside a ball, then the path $(p_\tau)_{\tau\in[0,1]}$ \cref{eq convolution path} satisfies \Cref{Ass_drift}. 
    \end{proposition}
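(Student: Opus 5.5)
The plan is to check the items of \Cref{Ass_drift} one at a time, drawing on \citet[Lemma~3.2]{cordero2025non}, \cref{lemma:convolutional path growth} and \cref{lemma:dissip_implies_LSI}. No new estimate should be required; the work consists of combining these results and confirming that each one holds uniformly over the whole interval $\tau\in[0,1]$, the endpoints included.

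\textbf{Smoothness and dissipativity.} Under the stated hypotheses ($U\in C^2$, $\nabla U$ Lipschitz, $U$ strongly convex outside a ball), \citet[Lemma~3.2]{cordero2025non} shows that $U_\tau=-\log p_\tau$ is differentiable and that $\nabla U_\tau$ is Lipschitz and dissipative, with Lipschitz and dissipativity constants uniform in $\tau\in[0,1]$. This gives the smoothness item, the dissipativity item and the uniformity item of \Cref{Ass_drift}, that is, $\sup_\tau\lip_\tau<\infty$, $\inf_\tau\dissi_\tau>0$, and finite suprema of $\dissii_\tau$ and $\dissiii_\tau$. If that lemma yields only strong convexity outside a ball uniformly in $\tau$, I would convert this to dissipativity myself. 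Strong convexity with modulus $m$ outside $B_{R}(0)$ together with $L$-Lipschitz gradients gives $\langle x,\nabla U_\tau(x)\rangle\ge \tfrac m2|x|^2-C$ for $|x|$ large, with $C$ depending only on $m,L,R$ and $|\nabla U_\tau(0)|$. The quantity $|\nabla U_\tau(0)|$ is bounded uniformly because $\nabla U_\tau(0)=-\E[\nabla\log p_\tau\text{-score at }0]$ is a conditional expectation against a smooth family. Adjusting $\dissi,\dissii,\dissiii$ then yields the required form.

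\textbf{Quadratic growth of $\partial_\tau U_\tau$.} This is exactly \cref{lemma:convolutional path growth}, which was derived from the SDE representation in \cref{lemma convolution path SDE}.

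\textbf{Log-Sobolev inequality.} I would establish regularity and then apply \cref{lemma:dissip_implies_LSI}. Since $p_1$ is Gaussian, for $\tau>0$ the density $p_\tau$ is a convolution with a smooth Gaussian and so is $C^\infty$. At $\tau=0$ we have $U_0=U\in C^2$ by hypothesis. Therefore $U_\tau$ is twice continuously differentiable for every $\tau\in[0,1]$, and it satisfies the smoothness and dissipativity conditions with uniform constants. \Cref{lemma:dissip_implies_LSI} then provides an LSI whose constant depends only on $(\lip,\dissi,\dissii,\dissiii)$ (and $d$), which gives $\sup_\tau\Clsi(\tau)\le\overline{\Clsi}<\infty$.

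\textbf{Expected obstacle.} The main difficulty is making sure that the constant produced by \cref{lemma:dissip_implies_LSI} depends on the potential only through the uniform constants, so that it can be bounded over all of $[0,1]$, in particular near $\tau=0$ and $\tau=1$. If that lemma also involves, for example, a bound on the normalization or on $U_\tau(0)$, I would handle it by a perturbation argument. The approach would be to write $U_\tau$ as a uniformly bounded perturbation of a uniformly strongly convex potential on the complement of a fixed ball, then combine Bakry--Émery with Holley--Stroock. The oscillation on the ball is controlled uniformly through the Lipschitz bound on $\nabla U_\tau$ and the uniform radius $\dissiii$.
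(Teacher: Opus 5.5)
Your decomposition matches the paper's proof. You take uniform Lipschitz continuity and dissipativity of $\nabla U_\tau$ from \citet[Lemma~3.2]{cordero2025non}, and the growth of $\partial_\tau U_\tau$ from \cref{lemma:convolutional path growth}. Your explicit log-Sobolev step is correct, and it fills in what the paper's proof leaves to the remark after \Cref{Ass_drift}. Indeed, $U_\tau$ is $C^\infty$ for $\tau>0$ and $C^2$ at $\tau=0$, and the constant from \cref{lemma:dissip_implies_LSI} depends only on $\lip,\dissi,\dissii,\dissiii,d$ (through \cref{lemma:bdd_2_moment} and the oscillation bound), so it is uniform in $\tau$.

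The gap is in the step you dispatch in one line while promising uniformity ``endpoints included''. The bound $|\partial_\tau U_\tau(x)|\le c(|x|^2+1)$ is not uniform as $\tau\to1$. The proof of \cref{lemma:convolutional path growth}, on which the paper's proof relies equally, writes $\partial_\tau\log p_\tau$ with an overall prefactor $\frac{1}{2(1-\tau)}$. Uniformly bounded Hessians then only yield $|\partial_\tau U_\tau(x)|\le c(|x|^2+1)/(1-\tau)$.

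This is not only a weakness of that argument; the bound itself fails. Take $U(x)=\frac12|x-m|^2$ with $m\neq0$, which meets every hypothesis of the proposition. Then $p_\tau=\Nc(\sqrt{1-\tau}\,m,\id)$ and
\begin{equation*}
    \partial_\tau U_\tau(x)=\frac{\langle x,m\rangle}{2\sqrt{1-\tau}}-\frac{|m|^2}{2},
\end{equation*}
which at $x=m$ diverges as $\tau\to1$, while $c(|m|^2+1)$ stays fixed. The mechanism is generic: the path depends on $\tau$ through $\sqrt{1-\tau}$, which is not Lipschitz at $\tau=1$. For any target with nonzero mean, the leading term of $\partial_\tau U_\tau(x)$ near $\tau=1$ is $\langle x,\int y\,\dd\pi(y)\rangle/(2\sqrt{1-\tau})$. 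So neither your argument nor the paper's establishes the proposition on $[0,1]$, and as written the statement is false.

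What both arguments do prove is the statement on $[0,\overline{\tau}]$ for every $\overline{\tau}<1$, exactly as in \cref{prop:dilation_ass}. There $\frac{1}{1-\tau}\le\frac{1}{1-\overline{\tau}}$. The Hessian bound together with $|\nabla U_\tau(0)|\le\lip\sqrt{\dissii/\dissi}$ (the minimizer $x_\tau^*$ satisfies $|x_\tau^*|\le\sqrt{\dissii/\dissi}$ by dissipativity) then gives the quadratic growth. Covering $\tau=1$ would need one of two things. One option is a finer expansion near $\tau=1$, for instance in the variable $\sqrt{1-\tau}$, in which the path is smooth. The other is a weaker, integrable growth condition of order $(1-\tau)^{-1/2}$, propagated through \cref{lem:potential and partition are Lipschitz in time} and \cref{lem:KL between dynamic and moving target}.
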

\begin{proof}
    By~\citep[Lemma~3.2]{cordero2025non}, all potentials in the path have uniformly Lipschitz-continuous and dissipative gradients.
    By \Cref{lemma:convolutional path growth}, the growth bound on $\partial_\tau \log p_\tau$ required in \Cref{Ass_drift} follows.
\end{proof}
\begin{figure*}
    \centering
    \begin{tikzpicture}
        \begin{groupplot}[
            group style={
                group size=4 by 1,
                horizontal sep=0.5cm,
                vertical sep=0.35cm,
                group name=G,
                xticklabels at=edge bottom,
                y descriptions at=edge left,
            },
            width=5cm,
            height=4.5cm,
            /tikz/line join=bevel,
            xlabel={Iteration $k$},
            legend style={
				draw=none,
				fill=none,
				at ={(3.15,1.43)},
                /tikz/column sep=2pt, 
                /tikz/every even column/.append style={column sep=12pt}, 
			},
            legend columns=-1,
            ymode=log,
            xmax=100000,
            cycle list name=plotcycle,
            ylabel={$\KL(\hat\mu_k|\pi)$}, 
        ]
            \nextgroupplot[title={$T=0.1$},]
            \foreach \yy in {KL_gt,ULA,dilation,tempering,diffusion,DAZ} {
                \addplot table [x=iter, y=\yy, col sep=comma,] {figures/gmm_1d/T_01/KL_comparison.csv};
            }
            \nextgroupplot[title={$T=1$}]
            \foreach \yy in {KL_gt, ULA,dilation,tempering,diffusion,DAZ} {
                \addplot table [x=iter, y=\yy, col sep=comma,] {figures/gmm_1d/T_1/KL_comparison.csv};
            }
            \legend{{Direct sample}, {\gls{ula}}, {Dilation}, {Tempering}, {Convolution}, {\gls{daz}}}
            \nextgroupplot[title={$T=2$}]
            \foreach \yy in {KL_gt, ULA,dilation,tempering,diffusion,DAZ} {
                \addplot table [x=iter, y=\yy, col sep=comma,] {figures/gmm_1d/T_2/KL_comparison.csv};
            }
            \nextgroupplot[title={$T=10$}]
            \foreach \yy in {KL_gt, ULA,dilation,tempering,diffusion,DAZ} {
                \addplot table [x=iter, y=\yy, col sep=comma,] {figures/gmm_1d/T_10/KL_comparison.csv};
            }
        \end{groupplot}
    \end{tikzpicture}
    \caption{%
        Forward-Kullback--Leiber divergence between the sample distribution and the one-dimensional Gaussian-mixture target in terms of the iteration for different paths and traversal speeds controlled by the parameter $T$.
        \emph{Direct sample}: A sample of the same size as used in the other schemes, directly drawn from the Gaussian mixture.%
    }%
    \label{fig 1d gmm convergence}
\end{figure*}

\subsubsection{Posteriors in Bayesian Inverse Problems}\label{sec:bayesian_inverse}
As an important special case, we consider the posterior distributions that arise in the context of Bayesian inverse problems.
The analysis in this section is different to that in~\cref{sssec:geometric tempering,ssec:dilation,sssec:daz,sssec:convolutional path}:
Instead of showing that a particular path is valid, we show that such posteriors inherit the properties of a valid path if the prior is such a valid path.
In this context annealing schemes have also been applied in the literature, \cf~\citep{xun2025posterior,habring2025diffusionjournal,blumenthal2025fast}.

In Bayesian inverse problems, one is interested in sampling from posterior distributions 
\begin{equation}
    p(x)\coloneqq p_{X|Y}(x|y)\propto p_{Y|X}(y|x)p_X(x)
\end{equation}
where $y$ is some known data.
In many relevant settings, such as medical imaging, microscopy, and astronomy, the relationship between the data \( y \) and the data-generating and unknown signal \( x \) can be modeled as $y = Ax + \epsilon$ where $\epsilon \sim \Nc(0,\sigma^2)$ with $\sigma^2$ the variance of the additive Gaussian noise.
In this case, the \emph{likelihood} admits the density
\begin{equation}
    p_{Y|X}(y|x) = \frac{1}{(2\pi\sigma^2)^{d/2}}\exp\biggl( -\frac{|Ax-y|^2}{2\sigma^2} \biggr)
\end{equation}
and we will show in the following proposition that it is possible to prescribe a path \( (p_X^\tau)_{\tau} \) for the \emph{prior} and that
\begin{equation}
    p_\tau(x) \coloneqq \frac{p_{Y|X}(y|x)p_X^\tau(x)}{p_Y(y)}
\end{equation}
satisfies \cref{Ass_drift} as long as $p_X^\tau$ satisfies it.
\begin{proposition}%
    \label{prop:posterior_ass}
    Assume the prior path $(p_X^\tau)_{\tau\in[0,1]}$ satisfies \Cref{Ass_drift} with potentials $U_X^\tau=-\log p_X^\tau$.
    Define for fixed $y$ the posterior path $(p_\tau)_\tau$ as $p_\tau(x)\propto p_{Y|X}(y|x)p_X^\tau(x)$ and let $U_\tau=-\log p_\tau$ be its potential.
    Then $(p_\tau)_{\tau\in[0,1]}$ satisfies \Cref{Ass_drift}.
    Moreover, the Lipschitz constant of $\nabla U_\tau$ increases by at most $\sigma^{-2}\|A^\top A\|$.
\end{proposition}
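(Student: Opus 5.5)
We write the posterior potential explicitly as
\begin{equation}
    U_\tau(x) = U_X^\tau(x) + \frac{1}{2\sigma^2}|Ax-y|^2 + c_\tau ,
\end{equation}
where $c_\tau$ collects $\log p_Y(y)$, the Gaussian normalization and the $\tau$-dependent normalization of the posterior. Since $c_\tau$ does not depend on $x$, it is irrelevant for gradients. We then verify each item of \Cref{Ass_drift} in turn.

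\emph{Smoothness.} We have $\nabla U_\tau = \nabla U_X^\tau + \sigma^{-2}A^\top(Ax-y)$. The added term is affine with Lipschitz constant $\sigma^{-2}\|A^\top A\|$. Hence $\lip_\tau \le \lip_\tau^X + \sigma^{-2}\|A^\top A\|$ uniformly in $\tau$, which is exactly the claimed increase of the Lipschitz constant.

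\emph{Dissipativity.} We compute
\begin{equation}
    \langle x, \sigma^{-2}A^\top(Ax-y)\rangle = \sigma^{-2}|Ax|^2 - \sigma^{-2}\langle Ax,y\rangle \ge -\sigma^{-2}\|A\|\,|y|\,|x|.
\end{equation}
By Young's inequality, $\sigma^{-2}\|A\||y||x| \le \frac{\dissi}{2}|x|^2 + C_y$. Adding this to the prior bound gives
\[
\langle x,\nabla U_\tau(x)\rangle \ge \tfrac{\dissi}{2}|x|^2 - \dissii - C_y .
\]
To recover the indicator form, pick $R'$ with $\frac{\dissi}{4}R'^2 \ge \dissii + C_y$. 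Outside $B_{R'}(0)$ this yields $\ge \frac{\dissi}{4}|x|^2$. Inside the ball, $\langle x,\nabla U_\tau(x)\rangle \ge \frac{\dissi}{4}|x|^2 - (\dissii+C_y)$. So the constants $\dissi/4$, $\dissii+C_y$ and $\max(R',\dissiii)$ work uniformly in $\tau$.

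\emph{Time derivative.} The likelihood does not depend on $\tau$, so $\partial_\tau U_\tau = \partial_\tau U_X^\tau + \partial_\tau c_\tau$. The first term satisfies the quadratic bound by assumption. For the second, write
\[
c_\tau = \log\int \exp\bigl(-U_X^\tau - |Ax-y|^2/(2\sigma^2)\bigr)\,\dd x .
\]
Differentiating under the integral gives $\partial_\tau c_\tau = -\E_{p_\tau}[\partial_\tau U_X^\tau]$ (with the sign convention of $c_\tau$). Its absolute value is at most $c(\E_{p_\tau}|X|^2+1)$.

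The second moments of $p_\tau$ are bounded uniformly in $\tau$ by uniform dissipativity. A standard argument uses $\int \langle x,\nabla U_\tau\rangle p_\tau = d$, obtained by integration by parts, to get $\dissi\E|X|^2 \le d + \dissii$. Differentiating under the integral is justified by dominated convergence, using the quadratic bound on $\partial_\tau U_X^\tau$ together with uniform Gaussian-type tails. These tails follow from dissipativity, which implies $U_\tau(x) \ge \frac{\dissi}{2}|x|^2 - C$ after integrating along rays.

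\emph{LSI.} This is the step I expect to be the main obstacle, since multiplying by a likelihood does not preserve LSI in general. The density $p_\tau$ is $p_X^\tau$ times $\exp(-\sigma^{-2}|Ax-y|^2/2)$, which is log-concave but not bounded above and below, so the Holley--Stroock perturbation lemma cannot be applied directly. I would first try to use a Bakry--\'Emery-plus-perturbation result: a log-concave multiplicative tilt of a measure satisfying LSI again satisfies LSI with constant no larger, provided the original measure's LSI comes from curvature. If that is not available in the needed generality, I would fall back on \cref{lemma:dissip_implies_LSI}, which yields an LSI with constant depending only on uniform Lipschitz and dissipativity constants. This fallback requires twice differentiability of $U_X^\tau$, which I would then add as a mild hypothesis, or remove by mollification combined with Holley--Stroock. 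Either route gives $\sup_\tau \Clsi(\tau) < \infty$.
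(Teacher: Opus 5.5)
Your proof is correct. For smoothness, the Lipschitz increase and dissipativity it is the paper's argument with the details written out. The paper only remarks that $\langle x, A^\top A x\rangle \ge 0$ and that the cross term is linear in $|x|$. You carry out the Young step and the reduction from $a$ to $a/4$ that the indicator form of dissipativity actually requires.

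You depart from the paper in the last two items.

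\emph{Time derivative.} The paper simply asserts $\partial_\tau U_\tau = \partial_\tau U_X^\tau$, i.e.\ it treats potentials modulo $\tau$-dependent constants. That is the convention it relies on downstream: the proof of \cref{lem:KL between dynamic and moving target} disposes of $\partial_\tau Z_\tau/Z_\tau$ by exactly your computation (the $p_\tau$-expectation of $\partial_\tau U_\tau$ plus uniform second moments). Your treatment of $c_\tau$ is therefore redundant under that reading, but it makes the statement hold for the literally normalized $U_\tau=-\log p_\tau$.

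One detail there needs repair. The tail bound $U_\tau(x)\ge \frac{a}{2}|x|^2 - C$ does not follow from dividing $\langle \nabla U_\tau(tx), tx\rangle \ge a t^2|x|^2 - b$ by $t$ along the whole ray, because $b/t$ is not integrable at $t=0$. On $\{t|x|\le R\}$ use $|\nabla U_\tau(tx)| \le |\nabla U_\tau(0)| + LR$ instead, with $|\nabla U_\tau(0)|\le L\sqrt{b/a}$ from the minimizer bound.

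\emph{LSI.} The paper's proof says nothing about the uniform LSI item at all, so here you are supplying a missing step rather than taking another route. Drop option (a): the prior's LSI is only assumed and need not come from curvature, so Bakry--\'Emery is unavailable.

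Option (b) works and needs no extra hypothesis. \cref{lemma:dissip_implies_LSI} uses only the posterior's uniform Lipschitz and dissipativity constants, so the prior's LSI is not even used. The $C^2$ requirement can be removed exactly as you suggest:
\begin{itemize}
\item For a symmetric mollifier $\rho_\epsilon$ supported in $B_\epsilon(0)$, $L$-smoothness gives $\|U_\tau*\rho_\epsilon - U_\tau\|_\infty \le L\epsilon^2/2$.
\item The mollified gradient is still $L$-Lipschitz, and it is dissipative up to a linear-in-$|x|$ error that you absorb as before.
\item Holley--Stroock then costs only a factor $e^{L\epsilon^2}$, uniformly in $\tau$.
\end{itemize}
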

\begin{proof}
    We have $U_\tau(x)=\frac{1}{2\sigma^2}|Ax-y|^2 + U_X^\tau(x)+\text{const}$, hence
    \begin{equation}
    \nabla U_\tau(x)=\frac{1}{\sigma^2}A^\top(Ax-y)+\nabla U_X^\tau(x).
    \end{equation}
    The first term is globally Lipschitz with constant $\sigma^{-2}\|A^\top A\|$, so $\nabla U_\tau$ is uniformly Lipschitz if $\nabla U_X^\tau$ is.
    Dissipativity is preserved (and typically strengthened) since $\langle x,A^\top A x\rangle\ge 0$ and cross terms are at most linear in $|x|$.
    Finally, $\partial_\tau U_\tau=\partial_\tau U_X^\tau$ because the likelihood term does not depend on $\tau$, so the growth condition transfers directly.
\end{proof}
\section{Numerical Experiments}%
\label{sec:numerical}
We now confirm the presented convergence results and compare the efficacy of the different paths discussed in \cref{ssec:paths} numerically.
Across all experiments we consider the problem of sampling from the Gaussian mixture
\begin{equation}\label{eq:gmm}
    \pi = \sum_{i=1}^{n_c}\alpha_i \Nc(m_i,\Sigma_i)
\end{equation}
with $n_c$ components that have the means $(m_i)_{i=1}^{n_c} \subset \R^d$, covariances $(\Sigma_i)_{i=1}^{n_c} \subset \R^{d\times d}$, and relative weights $(\alpha_i)_{i=1}^{n_c} \subset \R$.
The choice of the Gaussian mixture is primarily motivated by the fact that it admits analytical expressions for all intermediate distributions (and, in particular, the gradient of the potential of their densities) in all paths:
The distributions of the convolutional path from \cref{sssec:convolutional path} can be computed explicitly for all $\tau\in[0,1]$ as
\begin{equation}
    \pi_\tau = \sum_{i=1}^{n_c}\alpha_i \Nc(m_i(\tau),\Sigma_i(\tau))
\end{equation}
with means $m_i(\tau) = m_i\sqrt{1-\tau}$ and covariance $\Sigma_i(\tau) = (1-\tau)\Sigma_i + \tau \id$.
Therefore, $\nabla U_{\tau}$ of the convolutional path remains easy to compute without prior training.
This allows us to directly compare the different annealing schemes without interference of potential training inaccuracies. 

For all experiments, we choose the annealing schedule\footnote{The exponential schedule was not chosen for a particular reason other than that it constitutes a very natural way to model decay and its velocity via the half-life period $T$.} $\tau(t) = \exp\bigl( -\frac{t}{T} \bigr)$ and compare different values for $T$. 
We always pick the largest possible step size that the theoretical results allow for each method: $\step_k=\dissi_{\tau(t_k)}/\lip_{\tau(t_k)}^2$.
We estimated these parameters for the target through the largest and smallest eigenvalue of any of the covariance matrices.
For geometric tempering and the dilation path, this suffices to directly compute these parameters for all \( k \).
For the convolutional path, we estimated them at each iteration after an appropriate addition of a multiple of the identity matrix.
For \gls{daz}, we use the estimates by \citet{habring2025diffusionjournal}.
In all experiments, we simulate $N=\num{5000}$ independent chains to estimate the $\KL$ divergence.

\subsection{Results}
Here, we only present the results for $d=1$.
Additional results for \( d\in \{2, 10 \} \) as well as for an imaging experiment can be found in \cref{sec additional experiments}.
The parameters of the target distribution \cref{eq:gmm} are set to $n_c=3$, $(\alpha_i)_i = (0.3,0.4,0.3)$, $(m_i)_i = (-2, 0, 2)$, $(\sigma_i)_i=(0.2, 0.1, 0.3)$ where in this case $\sigma_i$ is the standard deviation of the $i$-th mixture component. 

We show the $\KL$ divergence for the different annealing schemes over the iterations in \cref{fig 1d gmm convergence}.
Some corresponding representative empirical histograms (after \num{1000}, \num{5000}, and \num{40000} iterations) are shown in \cref{fig gmm 1d histograms} in \cref{sec additional experiments}.
Overall, the convolutional path and \gls{daz} perform best.
For those paths, the annealing provides significant benefits in terms of quantitative convergence as well as mode coverage.
Tempering, dilation, and \gls{ula} trail in that order.
This is in line with the theoretical results.
In order to obtain small error terms in~\cref{eq:cont error term} and its discrete counterpart~\cref{eq:discrete error term}, two contradicting conditions are relevant, \cf~\cref{rmk:interpretation of results}:
(i) Small log-Sobolev constants $\Clsi(\tau)$, which is only possible for $\tau\gg 0$, since for $\tau=0$, the log-Sobolev constant is pre-determined by $\pi$ and (ii) fast convergence $\tau\rightarrow 0$.
The convolutional path and \gls{daz} exhibit better log-Sobolev constants for $\tau\gg 0$ as they yield better constants for $\lip,\dissi,\dissii,\dissiii$, \cf~\cref{lemma:dissip_implies_LSI}.
Moreover, these paths decrease $\lip_\tau$ leading to larger step sizes \citep[Remark 4.17]{habring2025diffusionjournal}.
In contrast, dilation and tempering do not provide a similar regularizing behavior.
This, in particular, leads to these methods struggling to cover all modes of the target, which is evident in the empirical histograms that we show in~\cref{fig gmm 1d histograms} in \cref{sec additional experiments}.
The convergence of these methods becomes worse as $T$ decreases which indicates that these methods require more time at larger $\tau$ to effectively benefit from the annealing paths. 
Dilation in fact increases the Lipschitz constant, leading to the smallest allowed step sizes which is in line with its performance being worst among the compared methods.

\section{Conclusion}%
\label{sec:conclusion}
In this article, we proved the convergence of time-inhomogeneous Langevin diffusions and their discretizations in the forward-Kullback--Leibler divergence.
The proofs hold under an abstract set of assumptions on the drift of the diffusion and cover many popular annealing schemes, such as geometric tempering and annealed Langevin sampling.
Compared to the literature, the presented proofs rigorously take care of potential non-smoothness arising from the fact that solutions of the Fokker--Planck equation are generally guaranteed only in the weak sense.

The convergence results as well as the numerical experiments clearly suggest that annealing paths should be designed in a way that ensures that initially the target density is mostly convex and well-conditioned (\ie, small log-Sobolev constant, small Lipschitz constant of the gradient of the potential). In particular, in such cases we find a clear benefit of the time-inhomogeneous diffusion over basic \gls{ula} in terms of convergence speed and mode coverage.
\paragraph{Limitations} A more concrete quantification and ideal choice of the schedule $\tau(t)$ with the current analysis would rely on estimates of the log-Sobolev constants which are in general hard to obtain.
\paragraph{Future work} For future work, we plan to derive explicit bounds on the log-Sobolev constants of probability paths $(p_\tau)_\tau$ in order to estimate optimal annealing schemes $\tau$ as well as optimal convergence rates.


\section*{Impact statement}
This paper presents work whose goal is to advance the field of machine learning.
There are many potential societal consequences of our work, none of which we feel must be specifically highlighted here.

\bibliography{references}
\bibliographystyle{icml2026}

\newpage
\appendix
\onecolumn

\section{Implementation Details Additional Numerical Results}\label{sec additional experiments}
\begin{figure*}
\centering
    \begin{tikzpicture}[baseline]
        \begin{groupplot}[
            group style={
                group size=3 by 3,
                horizontal sep=0.5cm,
                vertical sep=0.35cm,
                group name=G,
                xticklabels at=edge bottom,
                y descriptions at=edge left,
                x descriptions at=edge bottom,
            },
            width=6cm,
            height=4.5cm,
            /tikz/line join=bevel,
            legend style={
				draw=none,
				fill=none,
				at ={(3.25,1.5)},
                /tikz/column sep=2pt, 
                /tikz/every even column/.append style={column sep=12pt}, 
			},
            legend columns=-1,
            cycle list name=plotcycle,
        ]
        \nextgroupplot[ylabel={$T=0.1$},title={\num{1000} steps}]
            \addplot table [x=x, y=Ground truth density, col sep=comma] {figures/gmm_1d/gt_density.csv};            
    		\foreach \yy in {ULA,dilation,tempering,diffusion,daz} {
    			\addplot table [x=x, y=\yy, col sep=comma] {figures/gmm_1d/T_01/histo_comparison_iter_1000.csv};
                }
    		\legend{Target,\gls{ula},Dilation,Tempering,Convolution,\gls{daz}}
    	\nextgroupplot[title={\num{5000} steps}]
            \addplot table [x=x, y=Ground truth density, col sep=comma] {figures/gmm_1d/gt_density.csv};            
    		\foreach \yy in {ULA,dilation,tempering,diffusion,daz} {
    			\addplot table [x=x, y=\yy, col sep=comma] {figures/gmm_1d/T_01/histo_comparison_iter_5000.csv};
                }
    	\nextgroupplot[title={\num{40000} steps}]
            \addplot table [x=x, y=Ground truth density, col sep=comma] {figures/gmm_1d/gt_density.csv};            
    		\foreach \yy in {ULA,dilation,tempering,diffusion,daz} {
    			\addplot table [x=x, y=\yy, col sep=comma] {figures/gmm_1d/T_01/histo_comparison_iter_40000.csv};
                }
    	\nextgroupplot[ylabel={$T=2$}]
            \addplot table [x=x, y=Ground truth density, col sep=comma] {figures/gmm_1d/gt_density.csv};            
    		\foreach \yy in {ULA,dilation,tempering,diffusion,daz} {
    			\addplot table [x=x, y=\yy, col sep=comma] {figures/gmm_1d/T_2/histo_comparison_iter_1000.csv};
                }
    	\nextgroupplot
            \addplot table [x=x, y=Ground truth density, col sep=comma] {figures/gmm_1d/gt_density.csv};            
    		\foreach \yy in {ULA,dilation,tempering,diffusion,daz} {
    			\addplot table [x=x, y=\yy, col sep=comma] {figures/gmm_1d/T_2/histo_comparison_iter_5000.csv};
                }
    	\nextgroupplot
            \addplot table [x=x, y=Ground truth density, col sep=comma] {figures/gmm_1d/gt_density.csv};            
    		\foreach \yy in {ULA,dilation,tempering,diffusion,daz} {
    			\addplot table [x=x, y=\yy, col sep=comma] {figures/gmm_1d/T_2/histo_comparison_iter_40000.csv};
                }
    	\nextgroupplot[ylabel={$T=10$}]
            \addplot table [x=x, y=Ground truth density, col sep=comma] {figures/gmm_1d/gt_density.csv};            
    		\foreach \yy in {ULA,dilation,tempering,diffusion,daz} {
    			\addplot table [x=x, y=\yy, col sep=comma] {figures/gmm_1d/T_10/histo_comparison_iter_1000.csv};
                }
    	\nextgroupplot
            \addplot table [x=x, y=Ground truth density, col sep=comma] {figures/gmm_1d/gt_density.csv};            
    		\foreach \yy in {ULA,dilation,tempering,diffusion,daz} {
    			\addplot table [x=x, y=\yy, col sep=comma] {figures/gmm_1d/T_10/histo_comparison_iter_5000.csv};
                }
        \nextgroupplot
            \addplot table [x=x, y=Ground truth density, col sep=comma] {figures/gmm_1d/gt_density.csv};            
    		\foreach \yy in {ULA,dilation,tempering,diffusion,daz} {
    			\addplot table [x=x, y=\yy, col sep=comma] {figures/gmm_1d/T_10/histo_comparison_iter_40000.csv};
                }
    	\end{groupplot}
    \end{tikzpicture}
    \caption{%
        Histogram of the sample distributions compared to the one-dimensional Gaussian-mixture target.
        Rows show different values of the \emph{annealing speed} controlled by $T$.
        Columns show the empirical histograms after a different number of steps of the discretization.
    }%
    \label{fig gmm 1d histograms}
\end{figure*}

\subsection{Implementation Details}
As mentioned in the main part of the manuscript, we set the step sizes for all methods as $\step_k = \dissi_{\tau(t_k)}/\lip^2_{\tau(t_k)}$.
In order to do so, we need to estimate $\dissi_{\tau(t_k)}$ and $\lip^2_{\tau(t_k)}$, which we do as follows.

\paragraph{\gls{ula}}
For \gls{ula}, we have that $p_{\tau} = p_0 = \sum_{i=1}^{n_c}\alpha_i \Nc(m_i,\Sigma_i)$ for all $\tau$.
We estimate $\lip=\lip_0$ and $\dissi=a_0$ as
\begin{equation}
    \label{eq:estimate_dissip}
    \lip_0 = \max_i \frac{1}{\lambda_{\mathrm{min}}(\Sigma_i)}
\end{equation}
and
\begin{equation}
    \label{eq:estimate_dissip a}
    \dissi_0 = \min_i \frac{1}{\lambda_{\mathrm{max}}(\Sigma_i)}
\end{equation}
where $\lambda_{\mathrm{min}}$ and $\lambda_{\mathrm{max}}$ denote the smallest and largest eigenvalues, respectively.

\paragraph{Dilation}
We show in~\cref{ssec:dilation} that the time-dependent constants satisfy $\dissi_\tau = \frac{\dissi_0}{1-\tau}$ and $\lip_\tau = \frac{\lip_0}{1-\tau}$, so that the values for $\tau>0$ can be estimated using $\dissi_0$ and $\lip_0$ from \gls{ula}.

\paragraph{Tempering}
In the case of tempering where $U_\tau = (1-\tau)U_0 + \tau U_1 + c$ it follows that $\lip_\tau = (1-\tau)\lip_0 + \tau \lip_1$, $\dissi_\tau = (1-\tau)\dissi_0 + \tau \dissi_1$. In our experiments we set $U_1(x) = |x|^2/2$ to be a standard Gaussian so that the parameters $\dissi_1 = \lip_1 = 1$. The parameters for $\tau=0$ are again estimated like for \gls{ula}.

\paragraph{Convolution}
In the convolutional path, $p_\tau$ stays a \gls{gmm} with component covariance matrices $\Sigma_i(\tau) = (1-\tau)\Sigma_i + \tau \id$. Thus, for each $\tau$ we may estimate the parameters as
\begin{equation}\label{eq daz lipschitz}
    \lip_\tau = 
    \max_i \frac{1}{(1-\tau)\lambda_{\mathrm{min}}(\Sigma_i) + \tau}
\end{equation}
and
\begin{equation}
    \quad \dissi_\tau = \min_i \frac{1}{(1-\tau)\lambda_{\mathrm{max}}(\Sigma_i) + \tau}.
\end{equation}
\paragraph{\gls{daz}}
Based on~\citep[Remark 4.17]{habring2025diffusionjournal}, we set
\begin{equation}
    \lip_\tau = \min\Bigl\{ \frac{1}{\tau}, \max\Bigl\{0, \frac{\lip_0}{1-\lip_0\tau}\Bigr\}\Bigr\}
\end{equation}
and based on the definition of the Moreau envelope, we set $\dissi_\tau = \min\bigl\{\dissi_0, \frac{1}{\tau}\bigr\}$.

In~\cref{fig 1d gmm steps} we plot the physical time $t_k = \sum_{n=0}^k \step_k$ for the different annealing paths. There one can observe that paths, which lead to a more well-posed distribution for $\tau>0$ (especially \gls{daz} and convolutional) naturally lead to larger step sizes meaning we can travel more physical time in the same number of iterations. An interesting aspect is that for \gls{daz} after initially allowing for very large step sizes, there is a phase where step sizes are rather small which is related to the regime change in~\eqref{eq daz lipschitz}.
\begin{figure*}
    \centering
    \begin{tikzpicture}
        \begin{groupplot}[
            group style={
                group size=4 by 1,
                horizontal sep=0.7cm,
                vertical sep=0.35cm,
                group name=G,
                xticklabels at=edge bottom,
            },
            width=5cm,
            height=4.5cm,
            /tikz/line join=bevel,
            xlabel={Iteration $k$},
            legend style={
				draw=none,
				fill=none,
				at ={(4.0,1.43)},
                /tikz/column sep=2pt, 
                /tikz/every even column/.append style={column sep=12pt}, 
			},
            legend columns=-1,
            cycle list name=plotcycle,
            cycle list shift=1,
            ylabel={}, 
        ]
            \nextgroupplot[title={$T=0.1$},ylabel={$t_k = \sum_{i=0}^k\step_n$},]
            \foreach \yy in {ULA,dilation,tempering,diffusion,DAZ} {
                \addplot table [x=iter, y=\yy, col sep=comma,] {figures/gmm_1d/T_01/steps.csv};
            }
            \legend{\gls{ula},Dilation,Tempering,Convolution,\gls{daz}}
            \nextgroupplot[title={$T=1$},]
            \foreach \yy in {ULA,dilation,tempering,diffusion,DAZ} {
                \addplot table [x=iter, y=\yy, col sep=comma,] {figures/gmm_1d/T_1/steps.csv};
            }
            \nextgroupplot[title={$T=2$},]
            \foreach \yy in {ULA,dilation,tempering,diffusion,DAZ} {
                \addplot table [x=iter, y=\yy, col sep=comma,] {figures/gmm_1d/T_2/steps.csv};
            }
            \nextgroupplot[title={$T=10$},]
            \foreach \yy in {ULA,dilation,tempering,diffusion,DAZ} {
                \addplot table [x=iter, y=\yy, col sep=comma,] {figures/gmm_1d/T_10/steps.csv};
            }
        \end{groupplot}
    \end{tikzpicture}
    \caption{%
        Comparison of step sizes for different annealing schemes. On the $x$ axis we plot the iteration and on the $y$-axis the physical time $t_k = \sum_{n=1}^kh_n$ achieved at iteration $k$ for the different annealing paths with the maximum allowed step size $h_n=\dissi_{\tau(t_n)}/\lip^2_{\tau(t_n)}$. One can observe that especially the convolutional and the \gls{daz} path admit larger step-sizes initially aiding the faster convergence.
    }%
    \label{fig 1d gmm steps}
\end{figure*}

\subsection{Additional Numerical Results}
We provide some additional experimental results in higher dimensional settings ($d=2,10$).
In both cases, we again use as a target the \gls{gmm} from \eqref{eq:gmm}.

\subsection{One-dimensional \gls{gmm}}
In~\cref{fig 1d gmm convergence unif} we provide an additional plot for the 1D \gls{gmm} experiment, where we use the exact same step sizes for all paths. The purpose of this experiment is to investigate whether the benefits of the annealing schemes are mostly caused due to larger possible step sizes or if, in fact, the well-posedness of $(p_\tau)_\tau$ plays a crucial role. In this experiment we use a fixed step size $\step_k = \step = \dissi_0/\lip_0^2$ for all paths and all iterations where $\dissi_0$ and $\lip_0$ are the parameters from~\cref{Ass_drift} for $\tau=0$. While the convergence is slower compared to the adaptive step size results in~\cref{fig 1d gmm convergence}, indeed, we find qualitatively a similar behavior, as in~\cref{fig 1d gmm convergence}. As a main difference we find that for very slow annealing schedules ($T=10$) geometric tempering performs significantly better with smaller step sizes.
\begin{figure*}
    \centering
    \begin{tikzpicture}
        \begin{groupplot}[
            group style={
                group size=4 by 1,
                horizontal sep=0.5cm,
                vertical sep=0.35cm,
                group name=G,
                xticklabels at=edge bottom,
                y descriptions at=edge left,
            },
            width=5cm,
            height=4.5cm,
            /tikz/line join=bevel,
            xlabel={Iteration $k$},
            legend style={
				draw=none,
				fill=none,
				at ={(3.15,1.43)},
                /tikz/column sep=2pt, 
                /tikz/every even column/.append style={column sep=12pt}, 
			},
            legend columns=-1,
            ymode=log,
            xmax=100000,
            cycle list name=plotcycle,
            ylabel={$\KL(\hat\mu_k|\pi)$}, 
        ]
            \nextgroupplot[title={$T=0.1$},]
            \foreach \yy in {KL_gt,ULA,dilation,tempering,diffusion,DAZ} {
                \addplot table [x=iter, y=\yy, col sep=comma,] {figures/gmm_1d/unif_steps/T_01/KL_comparison.csv};
            }
            \nextgroupplot[title={$T=1$}]
            \foreach \yy in {KL_gt, ULA,dilation,tempering,diffusion,DAZ} {
                \addplot table [x=iter, y=\yy, col sep=comma,] {figures/gmm_1d/unif_steps/T_1/KL_comparison.csv};
            }
            \legend{{Direct sample}, {\gls{ula}}, {Dilation}, {Tempering}, {Convolution}, {\gls{daz}}}
            \nextgroupplot[title={$T=2$}]
            \foreach \yy in {KL_gt, ULA,dilation,tempering,diffusion,DAZ} {
                \addplot table [x=iter, y=\yy, col sep=comma,] {figures/gmm_1d/unif_steps/T_2/KL_comparison.csv};
            }
            \nextgroupplot[title={$T=10$}]
            \foreach \yy in {KL_gt, ULA,dilation,tempering,diffusion,DAZ} {
                \addplot table [x=iter, y=\yy, col sep=comma,] {figures/gmm_1d/unif_steps/T_10/KL_comparison.csv};
            }
        \end{groupplot}
    \end{tikzpicture}
    \caption{%
        Forward-Kullback--Leiber divergence between the sample distribution and the one-dimensional Gaussian-mixture target in terms of the iteration for different paths and traversal speeds controlled by the parameter $T$.
        \emph{Direct sample}: A sample of the same size as used in the other schemes, directly drawn from the Gaussian mixture.%
        In this experiment all methods use the same, constant step size $\step_k = \step = \dissi_0/\lip_0^2$ with $\dissi_0,\lip_0$ the parameters according to~\cref{Ass_drift} for $\tau=0$.
    }%
    \label{fig 1d gmm convergence unif}
\end{figure*}

\subsection{Two-dimensional \gls{gmm}}
In the 2D case we choose the parameters of the \gls{gmm} as $n_c=4$, $(x_i)_i = ([0.,0],[2,0],[0,2],[2,2])$, and $(\alpha_i)_i = (0.2,0.4,0.2,.2)$. Regarding the covariance, we choose each $\Sigma_i$ as a diagonal matrix with entries $(\sigma_{i,j})_j$ which are chosen as
$(\sigma_{1,j})_j = (0.2,0.2)$, $(\sigma_{2,j})_j = (0.1,0.2)$, $(\sigma_{3,j})_j = (0.3,0.1)$, $(\sigma_{4,j})_j = (0.1,0.1)$. We initialize the sampling at $x = (-1,-1)$. This way, it will be especially difficult for the methods to cover all modes, as the samplers may easily get trapped in the mode at $(0,0)$.

In \cref{fig 2d gmm convergence} as in the 1D case we show convergence plots in $\KL$ distance and in \cref{fig gmm 2d histograms} empirical histograms after \num{1000} and \num{40000} iterations in comparison to the ground truth density. Overall, we can observe the same behavior as in the 1D case with best performance by \gls{daz} and the convolutional path followed by tempering, dilation, and lastly conventional \gls{ula}. In \cref{fig 2d gmm convergence}, we find the same behavior as in the 1D case. If the annealing \emph{speed} is too high (\ie, $T$ is small), the methods cannot benefit from favorable properties of $p_\tau$ for $\tau\gg 0$. When increasing $T$ on the other hand, the methods reach lower final errors. In this case, annealing helps covering all modes of the target. This is also confirmed by the empirical histograms in~\cref{fig gmm 2d histograms}. There, for $T=0.1$ all methods except \gls{daz} and the convolutional path fail to recover additional modes except for the one at $(0,0)$. For $T=2$ dilation and tempering cover three of the four modes, while still only \gls{daz} and the convolutional path seem to reach all modes to an appropriate degree. The behavior of the dilation path for \num{1000} iterations can be explained by the fact that for dilation $\nabla p_\tau(x)\approx 0$ for $x$ significantly far away from the origin so that the sampler cannot make meaningful progress until $\tau$ decreases sufficiently. 

\begin{figure*}
    \centering
    \begin{tikzpicture}
        \begin{groupplot}[
            group style={
                group size=4 by 1,
                horizontal sep=0.5cm,
                vertical sep=0.35cm,
                group name=G,
                xticklabels at=edge bottom,
                y descriptions at=edge left,
            },
            width=5cm,
            height=4.5cm,
            /tikz/line join=bevel,
            xlabel={Iteration $k$},
            legend style={
				draw=none,
				fill=none,
				at ={(3.25,1.5)},
                /tikz/column sep=2pt, 
                /tikz/every even column/.append style={column sep=12pt}, 
			},
            legend columns=-1,
            ymode=log,
            xmax=100000,
            cycle list name=plotcycle,
            ylabel={$\KL(\hat\mu_k|\pi)$}, 
        ]
            \nextgroupplot[title={$T=0.1$},]
            \foreach \yy in {KL_gt,ULA,dilation,tempering,diffusion,DAZ} {
                \addplot table [x=iter, y=\yy, col sep=comma,] {figures/gmm_2d/T_01/KL_comparison.csv};
            }
            \nextgroupplot[title={$T=1$}]
            \foreach \yy in {KL_gt, ULA,dilation,tempering,diffusion,DAZ} {
                \addplot table [x=iter, y=\yy, col sep=comma,] {figures/gmm_2d/T_1/KL_comparison.csv};
            }
            \legend{{Direct sample}, {\gls{ula}}, {Dilation}, {Tempering}, {Convolution}, {\gls{daz}}}
            \nextgroupplot[title={$T=2$}]
            \foreach \yy in {KL_gt, ULA,dilation,tempering,diffusion,DAZ} {
                \addplot table [x=iter, y=\yy, col sep=comma,] {figures/gmm_2d/T_2/KL_comparison.csv};
            }
            \nextgroupplot[title={$T=10$}]
            \foreach \yy in {KL_gt, ULA,dilation,tempering,diffusion,DAZ} {
                \addplot table [x=iter, y=\yy, col sep=comma,] {figures/gmm_2d/T_10/KL_comparison.csv};
            }
        \end{groupplot}
    \end{tikzpicture}
    \caption{%
        2D Gaussian mixture.
        Convergence of different annealing schemes in $\KL$.
        Different plots show results for different annealing speeds controlled by the parameter $T$.
        \emph{Direct sample} refers to the $\KL$ divergence obtained for a sample directly drawn from the Gaussian-mixture model and of the same sample size $N$ as simulated using the annealing schemes.%
    }%
    \label{fig 2d gmm convergence}
\end{figure*}

\input{figures/tikz_code/gmm_2d_histograms}

\subsection{10-dimensional \gls{gmm}}
Lastly we consider the case $d=10$. The \gls{gmm} is chosen to have four modes again with weights $(\alpha_i)_i = (0.2,0.4,0.2,.2)$. The means as well as the variances of the components are initialized randomly with the means drawn from a standard normal distribution and the values $\sigma_{i,j}$ uniformly in the range $[0.1,0.4]$. Since in high dimensions evaluating the full $\KL$ divergence is increasingly difficult, as a proxy we compute the $\KL$ divergence of the individual marginals. More precisely, let $\Pi_{i}:\Pc(\R^d)\rightarrow\Pc(\R)$ be defined for any $\nu\in\Pc(\R^d)$ via
\begin{equation}
        \Pi_{i}\nu(A) = \nu (\R^{i-1}\times A\times\R^{d-i}),\quad A\subset\R,\text{ measurable.}
\end{equation}
Then as a measure of convergence we evaluate $\KL(\Pi_i\hat \mu_k|\Pi_i\pi)$. The results for $i=1,\dots,4$ are shown in \cref{fig nd gmm convergence}. Again we find best convergence for the convolutional path and \gls{daz}. In particular, \gls{daz} performs best across all settings. In this experiment the difference to the other methods is even more significant. The reason for the graph showing the convergence of the convolutional path ending sooner at $T=10$ is that we simulate all sampling algorithms for a fixed maximum time $t$. Since for each method the step sizes $h_k$ are chosen as the largest allowed step sizes and, thus, $h_k$ differs from method to method, it may happen that some methods reach the maximum time with fewer iterations then others.

\begin{figure*}
    \centering
    \newcommand{\ttt}{10}
    \begin{tikzpicture}
        \begin{groupplot}[
            group style={
                group size=4 by 1,
                horizontal sep=0.5cm,
                vertical sep=0.35cm,
                group name=G,
                y descriptions at=edge left,
            },
            width=5cm,
            height=4.5cm,
            /tikz/line join=bevel,
            legend style={
				draw=none,
				fill=none,
				at ={(3.25,1.5)},
                /tikz/column sep=2pt, 
                /tikz/every even column/.append style={column sep=12pt}, 
			},
            legend columns=-1,
            xticklabels=\empty,
            xlabel={},
            scaled ticks = false,
            ymode=log,
            xmax=10000,
            cycle list name=plotcycle,
            cycle list shift=1,
            ylabel={$T=1$}, 
        ]
            \nextgroupplot[title={Marginal 1},]
            \foreach \yy in {KL_0_ULA,KL_0_dilation,KL_0_tempering,KL_0_diffusion,KL_0_daz_scale1} {
                \addplot table [x=iter, y=\yy, col sep=comma,] {figures/gmm_nd/T\ttt_KLmarginal0.csv};
            }
            \nextgroupplot[title={Marginal 2},]
            \foreach \yy in {KL_1_ULA,KL_1_dilation,KL_1_tempering,KL_1_diffusion,KL_1_daz_scale1} {
                \addplot table [x=iter, y=\yy, col sep=comma,] {figures/gmm_nd/T\ttt_KLmarginal1.csv};
            }
            \legend{{\gls{ula}}, {Dilation}, {Tempering}, {Convolution}, {\gls{daz}}}
            \nextgroupplot[title={Marginal 3},]
            \foreach \yy in {KL_2_ULA,KL_2_dilation,KL_2_tempering,KL_2_diffusion,KL_2_daz_scale1} {
                \addplot table [x=iter, y=\yy, col sep=comma,] {figures/gmm_nd/T\ttt_KLmarginal2.csv};
            }
            \nextgroupplot[title={Marginal 4},]
            \foreach \yy in {KL_3_ULA,KL_3_dilation,KL_3_tempering,KL_3_diffusion,KL_3_daz_scale1} {
                \addplot table [x=iter, y=\yy, col sep=comma,] {figures/gmm_nd/T\ttt_KLmarginal3.csv};
            }
        \end{groupplot}
    \end{tikzpicture}
    \renewcommand{\ttt}{20}
    \begin{tikzpicture}
        \begin{groupplot}[
            group style={
                group size=4 by 1,
                horizontal sep=0.5cm,
                vertical sep=0.35cm,
                group name=G,
                y descriptions at=edge left,
            },
            width=5cm,
            height=4.5cm,
            /tikz/line join=bevel,
            legend style={
				draw=none,
				fill=none,
				at ={(3.25,1.5)},
                /tikz/column sep=2pt, 
                /tikz/every even column/.append style={column sep=12pt}, 
			},
            legend columns=-1,
            ymode=log,
            scaled ticks = false,
            xticklabels=\empty,
            xmax=10000,
            cycle list name=plotcycle,
            cycle list shift=1,
            ylabel={$T=2$}, 
        ]
            \nextgroupplot
            \foreach \yy in {KL_0_ULA,KL_0_dilation,KL_0_tempering,KL_0_diffusion,KL_0_daz_scale1} {
                \addplot table [x=iter, y=\yy, col sep=comma,] {figures/gmm_nd/T\ttt_KLmarginal0.csv};
            }
            \nextgroupplot
            \foreach \yy in {KL_1_ULA,KL_1_dilation,KL_1_tempering,KL_1_diffusion,KL_1_daz_scale1} {
                \addplot table [x=iter, y=\yy, col sep=comma,] {figures/gmm_nd/T\ttt_KLmarginal1.csv};
            }
            \nextgroupplot
            \foreach \yy in {KL_2_ULA,KL_2_dilation,KL_2_tempering,KL_2_diffusion,KL_2_daz_scale1} {
                \addplot table [x=iter, y=\yy, col sep=comma,] {figures/gmm_nd/T\ttt_KLmarginal2.csv};
            }
            \nextgroupplot
            \foreach \yy in {KL_3_ULA,KL_3_dilation,KL_3_tempering,KL_3_diffusion,KL_3_daz_scale1} {
                \addplot table [x=iter, y=\yy, col sep=comma,] {figures/gmm_nd/T\ttt_KLmarginal3.csv};
            }
        \end{groupplot}
    \end{tikzpicture}
    \renewcommand{\ttt}{100}
    \begin{tikzpicture}
        \begin{groupplot}[
            group style={
                group size=4 by 1,
                horizontal sep=0.5cm,
                vertical sep=0.35cm,
                group name=G,
                xticklabels at=edge bottom,
                y descriptions at=edge left,
            },
            width=5cm,
            height=4.5cm,
            /tikz/line join=bevel,
            xlabel={Iteration $k$},
            legend style={
				draw=none,
				fill=none,
				at ={(3.25,1.5)},
                /tikz/column sep=2pt, 
                /tikz/every even column/.append style={column sep=12pt}, 
			},
            legend columns=-1,
            ymode=log,
            xmax=10000,
            cycle list name=plotcycle,
            cycle list shift=1,
            ylabel={$T=10$}, 
        ]
            \nextgroupplot
            \foreach \yy in {KL_0_ULA,KL_0_dilation,KL_0_tempering,KL_0_diffusion,KL_0_daz_scale1} {
                \addplot table [x=iter, y=\yy, col sep=comma,] {figures/gmm_nd/T\ttt_KLmarginal0.csv};
            }
            \nextgroupplot
            \foreach \yy in {KL_1_ULA,KL_1_dilation,KL_1_tempering,KL_1_diffusion,KL_1_daz_scale1} {
                \addplot table [x=iter, y=\yy, col sep=comma,] {figures/gmm_nd/T\ttt_KLmarginal1.csv};
            }
            \nextgroupplot
            \foreach \yy in {KL_2_ULA,KL_2_dilation,KL_2_tempering,KL_2_diffusion,KL_2_daz_scale1} {
                \addplot table [x=iter, y=\yy, col sep=comma,] {figures/gmm_nd/T\ttt_KLmarginal2.csv};
            }
            \nextgroupplot
            \foreach \yy in {KL_3_ULA,KL_3_dilation,KL_3_tempering,KL_3_diffusion,KL_3_daz_scale1} {
                \addplot table [x=iter, y=\yy, col sep=comma,] {figures/gmm_nd/T\ttt_KLmarginal3.csv};
            }
        \end{groupplot}
    \end{tikzpicture}
    \caption{%
        10D Gaussian mixture.
        Convergence of different annealing schemes in $\KL$. We plot the $\KL$ divergence $\KL(\Pi_{x_i}\hat\mu_k|\Pi_{x_i}\pi)$ for $i=1,\dots, 4$ where for $\nu\in\Pc(\R^d)$, $\Pi_{x_i}\nu$ denotes the $x_t$ marginal of $\nu$.
        Rows show results for different annealing speeds controlled by the parameter $T$.
        Columns show the $\KL$ divergence of the marginals $x_i$ for $i=1,\dots, 4$.
    }%
    \label{fig nd gmm convergence}
\end{figure*}

\subsection{Image Denoising}
In~\cref{fig:image_denoising,fig:imaging_errors} we show results for annealing used for image denoising. More specifically, in this example the target density is defined as in~\cref{sec:bayesian_inverse} as
\begin{equation}
    p(x) = p_{X|Y}\propto p_{Y|X}(y|x)p_X(x)
\end{equation}
where 
\begin{equation}
    p_{Y|X}(y|x) = \frac{1}{(2\pi\sigma^2)^{d/2}}\exp\biggl( -\frac{|x-y|^2}{2\sigma^2} \biggr)
\end{equation}
and the annealing path is applied only to the prior, that is,
\begin{equation}
    p_\tau(x) \coloneqq \frac{p_{Y|X}(y|x)p_X^\tau(x)}{p_Y(y)}
\end{equation}
with $p_X^\tau(x)$ following the usual annealing schemes from~\cref{ssec:paths}. As the data $y$ we use a clean image $x^\dagger$ from the CelebA dataset~\citep{liu2015faceattributes} with size $d=256\times256$ which is corrupted by additive Gaussian noise, \ie, $y = x^\dagger + \sigma z$ with $\sigma=0.5$ and $z\sim\Nc(0,\id)$. As a prior $p_X$ we use the DiffUNet model provided by the DeepInverse library~\citep{tachella2025deepinverse}. We compare all methods except for \gls{daz} in this experiment as the estimation of the proximal operator for a diffusion model\footnote{which is not even guaranteed to be the gradient of any potential in general} is highly unlikely to be accurate. For all methods we fix a uniform step size of $\step=\step_k = 10^{-3}$ and initialize the chains at the noisy image $y$.

In~\cref{fig:image_denoising} we see that for \gls{ula} the estimated posterior mean contains visible artifacts from the noisy initialization in the depicted face whereas the annealing schemes provide a clean denoised image indicating that those methods depend less strongly on the initialization. Naturally, if $T$ is too small, these effects are diminished (\cf~the convolutional path for $T=0.01$). In~\cref{fig:imaging_errors} we provide quantitative results by plotting the mean squared error of the posterior mean to a reference estimate. As a reference we use the posterior mean obtained with \gls{ula} after \num{30000} iterations. Again, we find that annealing can provide accelerated convergence. If $T$ is chosen too small, the benefits of annealing diminish, however, the convergence speed is still at least as fast as with plain \gls{ula}. On the other hand, if $T$ is chosen too large, naturally, convergence becomes slower.

\input{figures/tikz_code/imaging_denoising.tex}
\begin{figure*}
    \centering
    \begin{tikzpicture}
        \begin{groupplot}[
            group style={
                group size=3 by 1,
                horizontal sep=0.5cm,
                vertical sep=0.35cm,
                group name=G,
                xticklabels at=edge bottom,
                y descriptions at=edge left,
            },
            width=6cm,
            height=5cm,
            /tikz/line join=bevel,
            xlabel={Iteration $k$},
            legend style={
				draw=none,
				fill=none,
				at ={(2.55,1.43)},
                /tikz/column sep=2pt, 
                /tikz/every even column/.append style={column sep=12pt}, 
			},
            legend columns=-1,
            ymode=log,
            xmode=log,
            cycle list name=plotcycle,
            ylabel={$\frac{1}{d}|\bar{x}_k  - \bar{x}_{\rm{ref}}|^2$}, 
        ]
            \nextgroupplot[title={$T=0.01$},]
            \foreach \yy in {ULA,dilation,tempering,diffusion} {
                \addplot table [x=iter, y=\yy, col sep=comma,] {figures/imaging/step_0,001/T_0,01/errors.csv};
            }
            \legend{{\gls{ula}}, {Dilation}, {Tempering}, {Convolution}}
            \nextgroupplot[title={$T=0.1$},]
            \foreach \yy in {ULA,dilation,tempering,diffusion} {
                \addplot table [x=iter, y=\yy, col sep=comma,] {figures/imaging/step_0,001/T_0,1/errors.csv};
            }
            \nextgroupplot[title={$T=0.5$},]
            \foreach \yy in {ULA,dilation,tempering,diffusion} {
                \addplot table [x=iter, y=\yy, col sep=comma,] {figures/imaging/step_0,001/T_0,5/errors.csv};
            }
        \end{groupplot}
    \end{tikzpicture}
    \caption{%
        MSE convergence of the mean for image denoising. We plot the errors $\frac{1}{d}|\bar{x}_k  - \bar{x}_{\rm{ref}}|^2$ where we refer to the caption of~\cref{fig:image_denoising} for the definition of the Monte-Carlo estimate $\bar{x}$. As a reference $\bar{x}_{\rm{ref}}$ we use the Monte-Carlo estimate $\bar{x}_k$ obtained with \gls{ula} after $k=30000$ iterations. For $T=0.1$ the benefits of annealing are most prominent. For $T=0.01$ the convolutional path already closely resembles that of \gls{ula} whereas for $T=0.5$ the annealing is too slow. Note, moreover, the steep slope of the error with the convolutional path due to favorable log-Sobolev constants. We want to emphasize, however, that the used error metric provides only a very crude quantification of accuracy.
    }%
    \label{fig:imaging_errors}
\end{figure*}

\section{Postponed Proofs}
Especially in proofs heavily relying on integration by parts, will sometimes use Einstein's sum convention, that is, whenever any index appears twice within a single term, we sum over said index, \eg, $\partial_{x_i}\partial_{x_i} f(x) =\Delta f(x)$ and $\partial_{x_i}f(x)\partial_{x_i}g(x) = \bigl\langle \nabla f(x), \nabla g(x)\bigr\rangle$.
We denote the upper bound of the log-Sobolev constants as 
\begin{equation}
    \overline{\Clsi} = \sup_{\tau\geq 0}\Clsi(\tau).
\end{equation}
The $p$-th moment of a probability distribution $\mu\in\Pc(\R^d)$ is defined as the quantity \( \int |x|^{p} \d\mu(x)<\infty\).

\subsection{Continuous-time Diffusion}
\subsubsection{Proof of \cref{thm:fokker planck}}\label{proof:thm fokker planck}
While the proof relies mostly on standard results we nonetheless provide the necessary references and the rigorous arguments.
\begin{proof}
    The existence of a unique strong solution for all time is implied by the Lipschitz continuity of the drift \citep[Section 5.2, Theorem 2.9]{karatzas2014brownian}.
    Then, \ito's lemma implies that
    \begin{equation}\label{eq:weak_pde}
        \begin{aligned}
            \d \phi(X_t,t) =
            \big[\partial_t \phi(X_t,t) - \bigl\langle \nabla \phi(X_t,t), \nabla U_{\tau(t)}(X_t)\bigr\rangle + \Delta \phi(X_t,t)\big]\d t
            + \sqrt{2}\nabla\phi(X_t,t)^\top\d W_t
        \end{aligned}      
    \end{equation}
    for any $\phi\in C^\infty_c(\R^d\times (0,\infty))$.
    Consequently, $\mu_t$ satisfies
    \begin{equation}
        \begin{aligned}
            \E[\phi(X_{T},T)-\phi(X_0,0)] = \E[\phi(X_{T},T)]
            = \int_0^{T}\int \left(\partial_t\phi(x,t) - \bigl\langle \nabla \phi(x,t),  \nabla U_{\tau(t)}(x) \bigr\rangle + \Delta \phi(x,t) \right)\d \mu_t(x)\d t.
        \end{aligned}
    \end{equation}
    The compact support of $\varphi$ then implies that
    \begin{equation}
        \lim_{T\to \infty} \E[\phi(X_{T},T)] = 0 = \int_0^{\infty}\int \left( \partial_t\phi(x,t) - \bigl\langle \nabla \phi(x,t),  \nabla U_{\tau(t)}(x)\bigr\rangle + \Delta \phi(x,t) \right) \d \mu_t(x)\d t
    \end{equation}
    or, equivalently, that $\mu_t$ satisfies the \gls{pde}
    \begin{equation}\label{eq:FP2}
        \partial_t \mu_t = \div(\mu_t \nabla U_{\tau(t)}) + \Delta \mu_t
    \end{equation}
    in the weak sense.
    Corollary 6.3.2. due to \citet{bogachev2022fokker} guarantees that there exists some $q_t\in L^1_{\mathrm{loc}}(\R^d\times (0,\infty))$ such that $\frac{\dd\mu_t}{\dd\lambda} = q_t$.
    Moreover, $q_t$ is continuous and strictly positive on $\R^d\times (0,\infty)$ and for a.e. $t$, $q_t\in W^{1,p}_{\mathrm{loc}}(\R^d)$ for all $p>d+2$ \citep[Section 9.4]{bogachev2002uniqueness}.
\end{proof}

\subsubsection{Proof of~\cref{lemma_time_deriv_KL}}%
\label{proof:lemma_time_deriv_KL}
We first show a formal proof sketch under the assumption of sufficient regularity to provide some intuition and proceed with a rigorous argument below.
Swapping integration and differentiation we find
\begin{equation}
    \frac{\dd }{\dd t}\KL(\mu_t,\pi_{\tau(t)}) = \frac{\dd}{\dd t}\int q_t \log\frac{q_t}{p_{\tau(t)}}\dd\lambda = \int \partial_t q_t \log\frac{q_t}{p_{\tau(t)}} + q_t \partial_t \log\frac{q_t}{p_{\tau(t)}}\dd \lambda.
\end{equation}
Now, since $q_t$ satisfies the Fokker--Planck equation~\cref{eq:FP}, we find that
\begin{equation}
    \begin{aligned}
        \frac{\dd }{\dd t}\KL(\mu_t,\pi_{\tau(t)}) 
        &= \int \left( \partial_{x_i}\left(-\partial_{x_i} \log p_{\tau(t)}q_t\right) + \partial_{x_i}\partial_{x_i} q_t \right)\log\frac{q_t}{p_{\tau(t)}} + \partial_t q_t - q_t \frac{\partial_t p_{\tau(t)}}{p_{\tau(t)}}\dd \lambda\\
        &= -\int \left( -\partial_{x_i} \log p_{\tau(t)}q_t + \partial_{x_i} q_t \right)\partial_{x_i}\log\frac{q_t}{p_{\tau(t)}} + \partial_t q_t - q_t \frac{\partial_t p_{\tau(t)}}{p_{\tau(t)}}\dd \lambda\\
        &= -\int q_t \left|\nabla \log\frac{q_t}{p_{\tau(t)}}\right|^2 - q_t \partial_t \log p_{\tau(t)}\dd \lambda\\
    \end{aligned}
\end{equation}
where we used in the last equality that $\int \partial_t q_t\dd \lambda = \frac{\dd}{\dd t}\int q_t\dd \lambda = \frac{\dd}{\dd t}1 = 0$.

The formal arguments above are not valid in general as $q_t$ is only a weak solution of the Fokker--Planck equation. Therefore, a classical time-derivative $\partial q_t$ need not exist and even if it does, it is not clear that we may swap integration with respect to $x$ and differentiation with respect to $t$.
We now proceed with the rigorous proof, which relies on mollification techniques.
\begin{proof}
    First note that by~\citep[7.3.10. Example, 8.2.4. Example, 8.2.5. Proposition]{bogachev2022fokker} we have that for any $0<t_1<t_2<\infty$ there exist $c_1,c_2>0$ such that 
    \begin{equation}\label{eq:density_exponential}
        \expo{-c_1(|x|^2+1)}\leq q_t(x)\leq \expo{-c_2|x|^2},\quad t\in [t_1,t_2],\; x\in \R^d.
    \end{equation}
    Let $\epsilon_0>0$ such that $t_1>\epsilon_0$. Moreover, let $\phi\in C^\infty_c(\R^d\times \R)$ be a mollifier with, in particular, $\phi(x,t)=0$ for $|t|\geq 1$, denote for $0<\epsilon<\epsilon_0$, $\phi^\epsilon(x,t) = \frac{1}{\epsilon^{d+1}}\phi((x,t)/\epsilon)$ and $q_t^\epsilon=\tilde{q}*\phi^\epsilon(x,t)$ with the convolution in $x$ and $t$ where $\tilde{q}$ is the zero for $t\notin[t_1-\epsilon_0,t_2+\epsilon_0]$ and identical to $q$ otherwise. 
    It follows that $q^\epsilon_t$ is $C^\infty$and satisfies
    \begin{equation}\label{eq:weak_dt}
        \partial_t q^\epsilon_t = -(q_t\partial_{x_i}\log p_{\tau(t)})*\partial_{x_i}\phi^\epsilon + q_t*\partial_{x_i}\partial_{x_i} \phi^\epsilon,\quad (x,t)\in \R^d\times (t_1,t_2).
    \end{equation}
    Indeed, for any $\psi\in C^\infty_c(\R^d\times (t_1,t_2))$ since $q_t$ solves \cref{eq:FP} weakly we have
    \begin{equation}
            \begin{aligned}
            &\iint \tilde{q}*\phi^\epsilon(x,t) \partial_t \psi(x,t) \d x \d t \\
            &\quad= \iiiint \tilde{q}_{t-s}(x-y) \phi^\epsilon(y,s) \partial_t \psi(x,t) \d x\d y\d t\d s\\
            &\quad=\iint \phi^\epsilon(y,s)\iint q_{t-s}(x-y)  \partial_t \psi(x,t) \d x\d t \d y \d s\\
            &\quad= \iint \phi^\epsilon(y,s)\iint \left(-q_{t-s}(x-y)\partial_{x_i}\log p_{\tau(t-s)}(x-y) + \partial_{x_i} q_{t-s}(x-y)\right) \partial_{x_i}\psi(x,t) \d x\d t \d y \d s\\
            &\quad= \iint \partial_{x_i}\psi(x,t) \iint \left(-q_{t-s}(x-y)\partial_{x_i}\log p_{\tau(t-s)}(x-y) + \partial_{x_i} q_{t-s}(x-y)\right) \phi^\epsilon(y,s)\d y \d s\d x\d t \\
            &\quad= \iint \partial_{x_i}\psi(x,t) \left(- q_t\partial_{x_i}\log p_{\tau(t)} + \partial_{x_i} q_t\right) * \phi^\epsilon (x)\d x\d t \\
            &\quad= -\iint \psi(x,t)  \left( -q_t\partial_{x_i}\log p_{\tau(t)}+ \partial_{x_i} q_t\right)*\partial_{x_i}\phi^\epsilon (x)\d x\d t
        \end{aligned}
    \end{equation}
    where we used that $\phi^\epsilon(y,s)=0$ for $|s|\geq\epsilon$, $\psi(x,t)=0$ for $t\notin [t_1,t_2]$ and consequently we can replace $\tilde{q}_{t-s}$ by $q_{t-s}$ since they coincide for ${t-s}\in [t_1-\epsilon,t_2+\epsilon]$. Moreover, by the exponential decay with respect to $x$ and the compact support in $t$, we have that $q_t^\epsilon\rightarrow \tilde{q}_t$ in $L^p(\R^d\times \R)$ as $\epsilon\rightarrow 0$ for any $p\in [1,\infty)$ and pointwise a.e. by taking an appropriate subsequence. 
    In addition $\partial_{x_i}q_t^\epsilon\rightarrow\partial_{x_i}q_t$ in $L^2(\R^d\times (t_1,t_2))$ and, again by taking a subsequence, pointwise a.e. (see \citep[Theorem 7.4.1]{bogachev2002uniqueness} for a proof that $\nabla q_t \in L^2(\R^d\times (t_1,t_2))$). Since $q$ is moreover continuous on $\R^d\times (0,\infty)$ it follows that $q_t^\epsilon\rightarrow q_t$ uniformly on any set of the form $K\times (t_1,t_2)$ with $K\subset{\R^d}$ compact.
    Since $q_t$ satisfies \cref{eq:density_exponential}\footnote{also for adapted $c_1,c_2$ on the slightly larger interval $[t_1-\epsilon_0,t_2+\epsilon_0]$} we find that
    \begin{equation}\label{eq:growthqeps}
        \begin{aligned}
            q_t^\epsilon(x) \leq \max_{y\in B_\epsilon(x)}q_t(y) \leq \max_{y\in B_\epsilon(x)}\exp\left(-c_2|y|^2\right) &= \1_{B_{2\epsilon}(0)}(x) + \1_{B^c_{2\epsilon}(0)}(x) \exp\left(-c_2\biggl|x-\frac{x}{|x|}\epsilon\biggr|^2\right)\\
            &= \1_{B_{2\epsilon}(0)}(x) + \1_{B^c_{2\epsilon}(0)}(x) \exp\left(-c_2|x|^2\biggl(1-\frac{\epsilon}{|x|}\biggr)^2\right)\\
            &\leq \1_{B_{2\epsilon}(0)}(x) + \1_{B^c_{2\epsilon}(0)}(x) \exp\left(-c_2|x|^2/4\right)\\
            &\leq c\exp\left(-c_2|x|^2/4\right)
        \end{aligned}
    \end{equation}
    where the constants can be chosen uniformly with respect to $\epsilon$ and $t\in[t_1,t_2]$. Similarly we have 
    \begin{equation}\label{eq:growthqeps2}
        \begin{aligned}
            q_t^\epsilon(x) \geq \exp\left(-c_1(2|x|^2+1)\right)
        \end{aligned}
    \end{equation}
    which, in particular, implies that $q_t^\epsilon$ has finite entropy for all $t$. For $R>0$ let $\chi_R\in C^\infty_c(\R^d)$ be such that $0\leq \chi_R\leq 1$, $\chi_R(x) = 1$ for $x\in B_R(0)$, $\chi_R(x)=0$ for $x\notin B_{R+1}(0)$, and such that $\|\partial_{x_i}\chi_R\|_\infty$ is bounded uniformly with respect to $R$. Using the fundamental theorem of calculus, we have\footnote{The time integral is over $(t_1,t_2)$ which we omit for simpler notation.}
    \begin{equation}\label{eq:AB}
        \begin{aligned}
            \int \chi_R q_{t_2}^\epsilon\log\frac{q_{t_2}^\epsilon}{p_{\tau(t_2)}}\dd \lambda - \int \chi_R q_{t_1}^\epsilon\log\frac{q_{t_1}^\epsilon}{p_{\tau(t_1)}}\dd \lambda &= \iint \chi_R\frac{\d}{\d t}\left(q^\epsilon_t\log\frac{q^\epsilon_t}{p_{\tau(t)}}\right) \d t \d \lambda\\
            &= \iint \chi_R\frac{\d}{\d t}q^\epsilon_t\log\frac{q^\epsilon_t}{p_{\tau(t)}} + \chi_Rq^\epsilon_t\frac{\d}{\d t}\log\frac{q^\epsilon_t}{p_{\tau(t)}} \d t \d \lambda\\
            &= A + B.
        \end{aligned}
    \end{equation}
    Our goal is in the following to take the limit on both sides of \cref{eq:AB}, first as $\epsilon\rightarrow 0$ and afterward as $R\rightarrow \infty$.
    \paragraph{Convergence of $B$}
    For the term $B$ we can estimate
    \begin{equation}
        \begin{aligned}
            B &= \iint \chi_R \frac{p_{\tau(t)}\partial_t q_t^\epsilon-q^\epsilon_t \partial_t p_{\tau(t)}}{p_{\tau(t)}}\d t \d \lambda \\
            &= \iint \chi_R\left(\partial_t q^\epsilon_t - q^\epsilon_t \partial_t \log p_{\tau(t)}\right)\d t \d \lambda\\
            &= \iint \chi_R\partial_t q^\epsilon_t \d \lambda\d t- \iint \chi_Rq^\epsilon_t \partial_t \log p_{\tau(t)}\d t \d \lambda\\
            &= -\iint ((-q_t\partial_{x_i}\log p_{\tau(t)})*\phi^\epsilon + q_t*\partial_{x_i}\phi^\epsilon) \partial_{x_i} \chi_R\d \lambda\d t - \iint \chi_Rq^\epsilon_t \partial_t \log p_{\tau(t)}\d t \d \lambda\\
            &= AA + BB.
        \end{aligned}
    \end{equation}
    As $\epsilon\rightarrow 0$ we then find for $BB$
    \begin{equation}
        \begin{aligned}
            \lim_{\epsilon\rightarrow 0} BB 
            = -\iint \chi_Rq_t \partial_t \log p_{\tau(t)}\d t \d \lambda
        \end{aligned}
    \end{equation}
    due to convergence of $q_t^\epsilon$ uniformly on compact sets and the fact that $\chi_R \partial_t \log p_{\tau(t)}$ is bounded and compactly supported with respect to $x$.
    For $AA$ we obtain using similar arguments that
    \begin{equation}
        \begin{aligned}
            \lim_{\epsilon\rightarrow 0} AA
            &= -\iint ((-q_t\partial_{x_i}\log p_{\tau(t)}) + \partial_{x_i}q_t) \partial_{x_i} \chi_R\d \lambda\d t.
        \end{aligned}
    \end{equation}
    \paragraph{Convergence of $A$}
    Regarding the term $A$ in \cref{eq:AB} we find
    \begin{equation}
        \begin{aligned}
            A = &\iint \chi_R ((-q_t\partial_{x_i}\log p_{\tau(t)})*\partial_{x_i}\phi^\epsilon + q_t*\partial_{x_i}\partial_{x_i}\phi^\epsilon) \log\frac{q_t^\epsilon}{p_{\tau(t)}} \d \lambda \d t \\
            = &-\iint ((-q_t\partial_{x_i}\log p_{\tau(t)})*\phi^\epsilon + q_t*\partial_{x_i}\phi^\epsilon) \partial_{x_i}\left( \chi_R \log\frac{q_t^\epsilon}{p_{\tau(t)}} \right)\d \lambda\d t\\
            =& -\iint ((-q_t\partial_{x_i}\log p_{\tau(t)})*\phi^\epsilon + q_t*\partial_{x_i}\phi^\epsilon) \partial_{x_i}\chi_R \log\frac{q_t^\epsilon}{p_{\tau(t)}}\d \lambda\d t\\
            &-\iint ((-q_t\partial_{x_i}\log p_{\tau(t)})*\phi^\epsilon + q_t*\partial_{x_i}\phi^\epsilon) \chi_R\partial_{x_i} \log\frac{q_t^\epsilon}{p_{\tau(t)}}\d \lambda \d t= AA + BB
        \end{aligned}
    \end{equation}
    Again, we consider the limits as $\epsilon\rightarrow 0$. Regarding $AA$ we first separate two terms
    \begin{equation}
        \begin{aligned}
            AA =& -\iint ((-q_t\partial_{x_i}\log p_{\tau(t)})*\phi^\epsilon + q_t*\partial_{x_i}\phi^\epsilon) \partial_{x_i}\chi_R \log q_t^\epsilon\d \lambda\\
            &+\iint ((-q_t\partial_{x_i}\log p_{\tau(t)})*\phi^\epsilon + q_t*\partial_{x_i}\phi^\epsilon) \partial_{x_i}\chi_R \log p_{\tau(t)}\d \lambda.
        \end{aligned}
    \end{equation}
    We begin with the more difficult first term
    \begin{equation}
        \begin{aligned}
            & \bigg|\iint ((-q_t\partial_{x_i}\log p_{\tau(t)})*\phi^\epsilon + q_t*\partial_{x_i}\phi^\epsilon) \partial_{x_i}\chi_R \log q_t^\epsilon - (-q_t\partial_{x_i}\log p_{\tau(t)} + \partial_{x_i}q_t) \partial_{x_i}\chi_R \log q_t\d \lambda\d t\bigg| \\
            &\quad \leq \bigg|\iint ((-q_t\partial_{x_i}\log p_{\tau(t)})*\phi^\epsilon + q_t*\partial_{x_i}\phi^\epsilon) \partial_{x_i}\chi_R \log q_t^\epsilon - (-q_t\partial_{x_i}\log p_{\tau(t)} + \partial_{x_i}q_t) \partial_{x_i}\chi_R \log q_t^\epsilon\d \lambda\d t\bigg|\\
            &\quad\quad +\bigg|\iint (-q_t\partial_{x_i}\log p_{\tau(t)} + \partial_{x_i}q_t) \partial_{x_i}\chi_R \log q_t^\epsilon - (-q_t\partial_{x_i}\log p_{\tau(t)} + \partial_{x_i}q_t) \partial_{x_i}\chi_R \log q_t\d \lambda\d t\bigg|\\
            &\quad\leq \iint \big|((-q_t\partial_{x_i}\log p_{\tau(t)})*\phi^\epsilon + q_t*\partial_{x_i}\phi^\epsilon) - (-q_t\partial_{x_i}\log p_{\tau(t)} + \partial_{x_i}q_t)\big|\times \big|\partial_{x_i}\chi_R \log q_t^\epsilon\big| \d \lambda\d t\\
            &\quad\quad+ \iint \big|(-q_t\partial_{x_i}\log p_{\tau(t)} + \partial_{x_i}q_t) \partial_{x_i}\chi_R \big| \times \big|  \log q_t^\epsilon - \log q_t\big|\d \lambda\d t\rightarrow 0
        \end{aligned}
    \end{equation}
    as $\epsilon\rightarrow 0$. For the first integral this follows from $L^1_{\mathrm{loc}}$ convergence\footnote{$L^1_{\mathrm{loc}}$ convergence, in turn, is implied by $L^2$ convergence.} of $(-q_t\partial_{x_i}\log p_{\tau(t)})*\phi^\epsilon + q_t*\partial_{x_i}\phi^\epsilon)$ and uniform boundedness of $\partial_{x_i}\chi_R \log q_t^\epsilon$ where we make use of the estimates \cref{eq:growthqeps} and \cref{eq:growthqeps2}. 
    For the second term, to the contrary, we use $L^1$ boundedness of $((-q_t\partial_{x_i}\log p_{\tau(t)}) + \partial_{x_i}q_t) \partial_{x_i}\chi_R$ and uniform convergence of $\log q_t^\epsilon$ on compact sets. The latter uniform convergence follows from uniform convergence of $q^\epsilon_t$ on sets of the form $K\times (t_1,t_2)$ for $K\subset\R^d$ compact together with the fact that by \cref{eq:growthqeps} and \cref{eq:growthqeps2} the set
    \begin{equation}
    U\coloneqq\{q^\epsilon_t(x)\;| (x,t)\in K\times (t_1,t_2), \; 0<\epsilon<\epsilon_0\}
    \end{equation}
    satisfies $U\subset [a,b]$ for some $0<a<b<\infty$ and the logarithm restricted to $[a,b]$ is Lipschitz.
    For the second term in $AA$ similar arguments can be repeated using uniform upper and lower bounds of $\log p_{\tau(t)}$ again on compact sets.

    Next, we tackle $BB$. We find 
    \begin{equation}
        \begin{aligned}
            &\iint ((-q_t\partial_{x_i}\log p_{\tau(t)})*\phi^\epsilon + q_t*\partial_{x_i}\phi^\epsilon) \chi_R\partial_{x_i} \log\frac{q_t^\epsilon}{p_{\tau(t)}}\d \lambda\d t\\
            &\quad=\iint ((-q_t\partial_{x_i}\log p_{\tau(t)})*\phi^\epsilon + q_t*\partial_{x_i}\phi^\epsilon) \chi_R\partial_{x_i} \log q_t^\epsilon \d \lambda\d t\\
            &\qquad-\iint ((-q_t\partial_{x_i}\log p_{\tau(t)})*\phi^\epsilon + q_t*\partial_{x_i}\phi^\epsilon) \chi_R\partial_{x_i} \log p_{\tau(t)}\d \lambda\d t
        \end{aligned}
    \end{equation}
    and once again, start with the more difficult first term
    \begin{equation}
        \begin{aligned}
            &\iint ((-q_t\partial_{x_i}\log p_{\tau(t)})*\phi^\epsilon + q_t*\partial_{x_i}\phi^\epsilon) \chi_R\partial_{x_i} \log q_t^\epsilon \d \lambda\d t\\
            &\quad=\iint ((-q_t\partial_{x_i}\log p_{\tau(t)})*\phi^\epsilon + q_t*\partial_{x_i}\phi^\epsilon) \chi_R\frac{\partial_{x_i}q_t^\epsilon }{q_t^\epsilon }\d \lambda\d t\\
            &\quad=\iint \chi_R\frac{\partial_{x_i}q_t^\epsilon }{q_t^\epsilon }(-q_t\partial_{x_i}\log p_{\tau(t)})*\phi^\epsilon + \frac{(\partial_{x_i}q_t^\epsilon)^2}{q_t^\epsilon }\chi_R\d \lambda\d t.
        \end{aligned}
    \end{equation}
    The second term can be understood as the squared $L^2$ norm of $\frac{\partial_{x_i}q_t^\epsilon}{\sqrt{q_t^\epsilon}}\sqrt{\chi_R}$ which converges if the respective function converges in $L^2$. We show the latter,
    \begin{equation}
            \Biggl\|\frac{\partial_{x_i}q_t^\epsilon}{\sqrt{q_t^\epsilon}}\sqrt{\chi_R} - \frac{\partial_{x_i}q_t}{\sqrt{q_t}}\sqrt{\chi_R}\Biggr\|_2
            \leq \Biggl\|\left( \partial_{x_i}q_t^\epsilon - \partial_{x_i}q_t\right) \frac{\sqrt{\chi_R}}{\sqrt{q_t^\epsilon}}\Biggr\|_2 
            + \Biggl\|\partial_{x_i}q_t \left(\frac{\sqrt{\chi_R}}{\sqrt{q_t^\epsilon}} - \frac{\sqrt{\chi_R}}{\sqrt{q_t}}\right)\Biggr\|_2.
    \end{equation}
    Using uniform, strictly positive upper and lower bounds of $q_t^\epsilon$ on $B_{R+1}(0)\times (t_1,t_2)$, the $L^2$ convergence of $\partial_{x_i}q_t^\epsilon$ and uniform convergence on compact sets of $\frac{\sqrt{\chi_R}}{\sqrt{q_t^\epsilon}}$ we find that the above tends to zero as $\epsilon\rightarrow 0$. The second term in $BB$ is handled similarly.
    \paragraph{Left-hand side of \cref{eq:AB}}
    Regarding the left-hand side of \cref{eq:AB} it follows that
    \begin{equation}
        \lim_{\epsilon\rightarrow 0}\int \chi_R q_{t_1}^\epsilon\log\frac{q_{t_1}^\epsilon}{p_{\tau(t_1)}}\d\lambda = \int \chi_R q_{t_1}\log\frac{q_{t_1}}{p_{\tau(t_1)}}\dd\lambda
    \end{equation}
    again by uniform convergence of $q^\epsilon$ on compact sets and appropriate boundedness rendering the logarithm Lipschitz continuous. We therefore find 
    \begin{equation}
        \begin{aligned}
            &\int \chi_R q_{t_2}\log\frac{q_{t_2}}{p_{\tau(t_2)}}\dd \lambda - \int \chi_R q_{t_1}\log\frac{q_{t_1}}{p_{\tau(t_1)}}\dd \lambda\\
            &\quad= -\iint (-q_t\partial_{x_i}\log p_{\tau(t)} + \partial_{x_i}q_t) \partial_{x_i}\chi_R \log\frac{q_t}{p_{\tau(t)}}\d \lambda\d t\\
            &\qquad-\iint (-q_t\partial_{x_i}\log p_{\tau(t)} + \partial_{x_i}q_t) \chi_R\partial_{x_i} \log\frac{q_t}{p_{\tau(t)}}\d \lambda \d t\\
            &\qquad-\iint (-q_t\partial_{x_i}\log p_{\tau(t)} + \partial_{x_i}q_t) \partial_{x_i} \chi_R\d \lambda\d t - \iint \chi_Rq_t \partial_t \log p_{\tau(t)}\d t \d \lambda.
        \end{aligned}
    \end{equation}
    Next, we want to take the limit of the above as $R\rightarrow\infty$ using Lebesgue's dominated convergence. For this we note that $\chi_R\rightarrow 1$, $\partial_{x_i}\chi_R\rightarrow 0$ pointwise a.e. as $R\rightarrow \infty$. Moreover, in order to apply dominated convergence, we need to make sure that each function sequence admits an integrable upper bound. For most of the involved functions this follows directly from the exponential decay of $q_t$ with respect to $x$ and the fact that $\partial_{x_i}q_t\in L^2(\R^d\times (t_1,t_2))$. For the term $\partial_{x_i}q_t \log q_t$ we note that 
    \begin{equation}
        \iint \left|\partial_{x_i}q_t \log q_t\right|\dd \lambda \dd t=\iint \left|\frac{\partial_{x_i}q_t}{\sqrt{q_t}} \sqrt{q_t}\log q_t\right|\dd \lambda \dd t\leq \left\| \frac{\partial_{x_i}q_t}{\sqrt{q_t}}\right\|_{2} \left\|\sqrt{q_t}\log q_t\right\|_2
    \end{equation}
    where the second norm is finite by the bounds \cref{eq:growthqeps} and \cref{eq:growthqeps2} and the first term is bounded as shown in~\citep[Theorem 7.4.1]{bogachev2022fokker}. Similar arguments apply to the term $\partial_{x_i}q_t \log p_{\tau(t)}$. Thus, taking the limit for $R\rightarrow\infty$ yields
    \begin{equation}
        \begin{aligned}
            \KL(\mu_{t_2}|\pi_{\tau(t_2)})-\KL(\mu_{t_1}|\pi_{\tau(t_1)}) &= -\iint ((-q_t\partial_{x_i}\log p_{\tau(t)}) + \partial_{x_i}q_t) \partial_{x_i}\log\frac{q_t}{p_{\tau(t)}}\d \lambda\d t - \iint q_t \partial_t \log p_{\tau(t)}\d \lambda\d t\\
            &= -\iint \biggl(q_t\partial_{x_i}\log \frac{q_t}{p_{\tau(t)}}\biggr) \partial_{x_i}\log\frac{q_t}{p_{\tau(t)}}\d \lambda\d t - \iint q_t \partial_t \log p_{\tau(t)}\d \lambda\d t\\
            &= -\iint \biggl|q_t\partial_{x_i}\log \frac{q_t}{p_{\tau(t)}}\biggr|^2\d \lambda\d t - \iint q_t \partial_t \log p_{\tau(t)}\d \lambda\d t
        \end{aligned}
    \end{equation}
    where we used that, by \cref{lem:appendix_weak_diff}, $\partial_{x_i}\log q_t = \frac{\partial_{x_i}q_t}{q_t}$.
    From the above it is also apparent that 
    \begin{equation}
        t \mapsto \KL(\mu_t|\pi_{\tau(t)})
    \end{equation}
    is absolutely continuous as the integral of an $L^1$ function. Consequently, $\KL(\mu_t|\pi_{\tau(t)})$ is a.e. differentiable with respect to $t$ and we have 
    \begin{equation}
        \begin{aligned}
            \frac{\d}{\d t}\KL(\mu_t|\pi_{\tau(t)})= -\int q_t\biggl|\nabla\log \frac{q_t}{p_{\tau(t)}}\biggr|^2 - q_t \partial_t \log p_{\tau(t)} \d \lambda.
        \end{aligned}
    \end{equation}
    concluding the proof.
\end{proof}

Using the above lemma we can immediately derive the following lemma.
\begin{lemma}\label{lem:KL between dynamic and moving target}
    Under \cref{Ass_drift}, the diffusion $\mu_t$ satisfy for some $c>0$
    \begin{equation}
        \KL(\mu_t|\pi_{\tau(t)})  \leq \exp\biggl( -\int_0^t\frac{4}{\Clsi(s)}\dd s \biggr)\KL(\mu_0|\pi_{\tau(0)})  + c \int_{0}^t \exp\biggl( -\int_s^t\frac{4}{\Clsi(\sigma)}\dd\sigma \biggr)|\dot{\tau}(s)|\d s
    \end{equation}
\end{lemma}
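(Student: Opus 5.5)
We start from the identity of \cref{lemma_time_deriv_KL}, which holds for almost every $t$. Moreover, $t\mapsto \KL(\mu_t|\pi_{\tau(t)})$ is absolutely continuous on compact subintervals of $(0,\infty)$. The plan is to bound both terms on the right-hand side of \cref{eq:time derviative KL} in terms of $F(t)\coloneqq\KL(\mu_t|\pi_{\tau(t)})$ and $|\dot\tau(t)|$, and then to close with a Grönwall argument.

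\emph{Dissipation term.} Apply the \gls{lsi} for $\pi_{\tau(t)}$ with $f=\sqrt{q_t/p_{\tau(t)}}$. Then $\int f^2\log f^2\,\dd\pi_{\tau(t)}=F(t)$ and $\int|\nabla f|^2\dd\pi_{\tau(t)}=\tfrac14\int q_t|\nabla\log(q_t/p_{\tau(t)})|^2$. Hence the Fisher-information term is at least $\tfrac{4}{\Clsi(t)}F(t)$. Since the \gls{lsi} is stated only for $C_c^\infty$ functions, we extend it to this $f$ by approximation. For this we use $f\chi_R$ together with mollification, the Gaussian upper and lower bounds \cref{eq:density_exponential}, and the fact that $\partial_{x_i}q_t/\sqrt{q_t}\in L^2$, which is already used in the proof of \cref{lemma_time_deriv_KL}. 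Fatou's lemma on the left-hand side and dominated convergence on the right-hand side then complete the extension.

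\emph{Tracking term.} By the chain rule, $\partial_t\log p_{\tau(t)}=-\dot\tau(t)\,\partial_\tau U_\tau|_{\tau=\tau(t)}+\frac{\dd}{\dd t}\log Z_{\tau(t)}$, where the normalization is absorbed because $U_\tau=-\log p_\tau$ is normalized. The quadratic growth bound in \cref{Ass_drift} then gives
\[
\Bigl|\int q_t\,\partial_t\log p_{\tau(t)}\Bigr|\le c\,|\dot\tau(t)|\bigl(\E|X_t|^2+1\bigr).
\]
If only unnormalized potentials are controlled, the derivative of the log-partition function is an expectation under $\pi_{\tau}$ of $\partial_\tau U_\tau$. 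That expectation is bounded by $c(\int|x|^2\dd\pi_\tau+1)$, and dissipativity bounds this uniformly in $\tau$.

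\emph{Uniform moment bound.} The remaining ingredient is $\sup_t\E|X_t|^2<\infty$. We obtain it from Itô's formula applied to $|X_t|^2$, which gives $\frac{\dd}{\dd t}\E|X_t|^2=-2\E\langle X_t,\nabla U_{\tau(t)}(X_t)\rangle+2d\le -2\dissi\,\E|X_t|^2+2\dissii+2d$, using the uniform dissipativity constants. This bound holds provided $\mu_0$ has a finite second moment, which is needed in any case for $F(0)<\infty$ to be meaningful. Grönwall then yields $\E|X_t|^2\le \E|X_0|^2+(\dissii+d)/\dissi$. Justifying the Itô step rigorously requires a localization by stopping times, which is standard.

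Combining the three bounds gives, for almost every $t$, $F'(t)\le-\tfrac{4}{\Clsi(t)}F(t)+c|\dot\tau(t)|$. We multiply by the integrating factor $\exp(\int_0^t 4/\Clsi(s)\,\dd s)$ and integrate. This is legitimate because $F$ is absolutely continuous and $1/\Clsi$ is bounded. The result is the claimed estimate on $[t_1,t]$. Finally, let $t_1\downarrow0$, using the lower semicontinuity of $\KL$ together with weak continuity of $\mu_t$ and of $\pi_{\tau(t)}$ at $t=0$. This endpoint argument is the one delicate point, since \cref{lemma_time_deriv_KL} is stated only on intervals $[t_1,t_2]\subset(0,\infty)$. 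I expect the main obstacle to be the rigorous extension of the \gls{lsi} to $\sqrt{q_t/p_{\tau(t)}}$, together with this endpoint passage.
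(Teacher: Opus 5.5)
Your skeleton is the paper's proof. The Fisher information is bounded below by $\frac{4}{\Clsi(t)}F(t)$ through the LSI extended to $\sqrt{q_t/p_{\tau(t)}}$; the paper does this in \cref{lemma:lsi_weak,lemma:lsi_KL} with the same cutoff, approximation and Fatou argument you sketch. The tracking term is bounded by $c|\dot\tau(t)|$ via the quadratic growth bound, the log-partition derivative and uniform second moments of $\pi_\tau$ and $\mu_t$, and Gr\"onwall closes the argument. Your direct It\^o bound on $\E|X_t|^2$ needs only a finite second moment of $\mu_0$, which is a small improvement on \cref{lemma:mu_t bounded moments}.

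The step that fails is the one you add at the end. Integrating on $[t_1,t]$ gives
\[
F(t)\le \exp\Bigl(-\int_{t_1}^t\frac{4}{\Clsi(s)}\dd s\Bigr)F(t_1)+c\int_{t_1}^t\exp\Bigl(-\int_s^t\frac{4}{\Clsi(\sigma)}\dd\sigma\Bigr)|\dot\tau(s)|\dd s .
\]
To replace $F(t_1)$ by $F(0)$ you need $\limsup_{t_1\downarrow0}F(t_1)\le F(0)$. Lower semicontinuity of $\KL$ under weak convergence gives only $F(0)\le\liminf_{t_1\downarrow0}F(t_1)$, which is the opposite inequality, so it does not help. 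The paper does not resolve this point either: it applies \cref{lem:gronwall} on $[0,t]$ directly, although \cref{lemma_time_deriv_KL} only gives absolute continuity of $F$ on intervals $[t_1,t_2]\subset(0,\infty)$.

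What is missing is an \emph{upper} bound on $F$ near $t=0$. One way to get it is as follows. Let $\mathbb P$ be the law of $(X_r)_{r\in[0,s]}$, and let $\mathbb Q$ be the law of the frozen Langevin diffusion with drift $-\nabla U_{\tau(0)}$ started at its stationary law $\pi_{\tau(0)}$. Data processing (evaluation at time $s$), the chain rule for $\KL$ and Girsanov give
\[
\KL(\mu_s|\pi_{\tau(0)})\le\KL(\mathbb P|\mathbb Q)\le F(0)+\frac14\int_0^s\E\bigl|\nabla U_{\tau(r)}(X_r)-\nabla U_{\tau(0)}(X_r)\bigr|^2\dd r\le F(0)+Cs .
\]
The last step uses $|\nabla U_\tau(x)|\le\lip\bigl(|x|+\sqrt{\dissii/\dissi}\bigr)$, which holds uniformly in $\tau$, together with your moment bound. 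Moreover, $F(s)-\KL(\mu_s|\pi_{\tau(0)})=\int q_s\log(p_{\tau(0)}/p_{\tau(s)})\,\dd x\le c|\tau(s)-\tau(0)|$ by the growth bound in \cref{Ass_drift} (cf.\ \cref{lem:potential and partition are Lipschitz in time}) and the moment bound. Hence $\limsup_{s\downarrow0}F(s)\le F(0)$, which is what the passage $t_1\downarrow 0$ actually requires.
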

\begin{proof}
    From \cref{lemma_time_deriv_KL} and \gls{lsi} together with \cref{lemma:lsi_KL} it follows that
    \begin{equation}
            \frac{\d}{\d t}\KL(\mu_t|\pi_{\tau(t)}) = -\int q_t\biggl|\nabla \log\frac{q_t}{p_{\tau(t)}}\biggr|^2 - q_\tau \partial_\tau \log p_{\tau(t)} \d \lambda \leq -\frac{4}{\Clsi(t)}\KL(\mu_t|\pi_{\tau(t)}) - \int q_t\partial_t\log p_{\tau(t)}\d \lambda.
    \end{equation}
    Regarding the last term, we find $\partial_\tau\log p_{\tau} = -\partial_\tau U_\tau - \frac{\partial_\tau Z_\tau}{Z_\tau}$. For $\partial_\tau Z_\tau$, in turn, we can estimate
    \begin{equation}
        \frac{1}{Z_\tau}\partial_t Z_\tau = \frac{1}{Z_\tau}\partial_\tau \int \exp\bigl( -U_\tau(x) \bigr) \d x = \frac{1}{Z_\tau}\int - \partial_\tau U_\tau (x) \exp\bigl( -U_\tau(x) \bigr) \d x = \int -\partial_\tau U_\tau p_{\tau} \d \lambda
    \end{equation}
    where we used dominated convergence to swap the integral and the derivative. Thus,
    \begin{equation}
            \left|\int q_t\partial_t\log p_{\tau(t)}\d \lambda \right| = \left| \dot{\tau}(t)\int \partial_\tau U_{\tau(t)} (p_{\tau(t)}-q_t) \d \lambda \right| \leq \left| \dot{\tau}(t)\right| \int c(|x|^2+1) (p_{\tau(t)}(x)-q_t(x)) \d x \leq c \left| \dot{\tau}(t)\right|
    \end{equation}
    with adapted $c$ using \cref{lemma:bdd_2_moment,lemma:mu_t bounded moments}. In total we find 
    \begin{equation}
        \frac{\d}{\d t}\KL(\mu_t|\pi_{\tau(t)}) 
            \leq -\frac{4}{\Clsi(t)}\KL(\mu_t|\pi_{\tau(t)}) + c \left| \dot{\tau}(t)\right|
    \end{equation}
    Then, by \cref{lem:gronwall} the assertion follows.
\end{proof}

\subsubsection{Proof of~\cref{thm:continuous diffusion final convergence}}\label{proof:thm:continuous diffusion final convergence}
\begin{proof} 
    We compute
    \begin{equation}
            \KL(\mu_t|\pi) = \KL(\mu_t|\pi) -\KL(\mu_t|\pi_{\tau(t)}) +\KL(\mu_t|\pi_{\tau(t)}) = \int q_t \log\frac{p_{\tau(t)}}{p} \d \lambda + \KL(\mu_t|\pi_{\tau(t)}).
    \end{equation}
    By \cref{lem:potential and partition are Lipschitz in time}, $\log\frac{p_{\tau(t)}(x)}{p(x)}\leq c_1 \tau(t) + c_2\tau(t)|x|^2$. Therefore, using \cref{lemma:bdd_2_moment,lem:KL between dynamic and moving target} the result follows.
\end{proof}

\subsubsection{Proof of~\cref{cor:final convergence continuous}}\label{proof:cor:final convergence continuous}
\newcommand{\wc}{\overline{\Clsi}}

\begin{proof}
    Let $\delta>0$ be arbitrary. We can pick $t_1>0$ such that $\tau(t_1)<\delta$. Afterward, pick $t_2>t_1$ sufficiently large such that $\exp\bigl( -\int_{t_1}^{t_2}\frac{4}{\Clsi(s)}\dd s \bigr)<\delta$ which is possible since by uniform dissipativity $\Clsi(t)\leq \overline{\Clsi}$ for some $\overline{\Clsi}<\infty$ and all $t$. Moreover, pick $t_2$ sufficiently large such that $\tau(t)<\delta$ for $t>t_2$.
    We find for any $t>t_2$ by~\cref{thm:continuous diffusion final convergence}
    \begin{equation}\label{eq:final_conv_cont}
        \begin{aligned}
            \KL(\mu_t|\pi) &\leq c \tau(t) + \exp\biggl( -\int_0^t\frac{4}{\Clsi(s)}\dd s \biggr)\KL(\mu_0|\pi_{\tau(0)})  + c \int_{0}^t \exp\biggl( -\int_s^t\frac{4}{\Clsi(\sigma)}\dd\sigma \biggr)|\dot{\tau}(s)|\d s\\
            & \leq c\tau(t) + \KL(\mu_0|\pi_{\tau(0)})\exp\biggl( -\int_0^t\frac{4}{\Clsi(s)}\dd s \biggr) + c \int_{0}^{t_1} \exp\biggl( -\int_s^t\frac{4}{\Clsi(\sigma)}\dd\sigma \biggr)|\dot{\tau}(s)|\d s \\
            &\qquad+ c \int_{t_1}^t \exp\biggl( -\int_s^t\frac{4}{\Clsi(\sigma)}\dd\sigma \biggr)|\dot{\tau}(s)|\d s\\
            & =c\tau(t) + \KL(\mu_0|\pi_{\tau(0)})\exp\biggl( -\int_0^t\frac{4}{\Clsi(s)}\dd s \biggr) - c \int_{0}^{t_1} \exp\biggl( -\int_{t_1}^t\frac{4}{\Clsi(\sigma)}\dd\sigma \biggr)\dot{\tau}(s)\d s - c \int_{t_1}^t \dot{\tau}(s)\d s\\
            &= c\tau(t) + \KL(\mu_0|\pi_{\tau(0)})\exp\biggl( -\int_0^t\frac{4}{\Clsi(s)}\dd s \biggr) - c(\tau(t_1)-\tau(0)) \exp\biggl( -\int_{t_1}^t\frac{4}{\Clsi(\sigma)}\dd\sigma \biggr) \\
            &\qquad- c (\tau(t)-\tau(t_1))\\
            &\leq c\tau(t) + \KL(\mu_0|\pi_{\tau(0)})\exp\biggl( -\int_0^t\frac{4}{\Clsi(s)}\dd s \biggr) + c \tau(0) \exp\biggl( -\int_{t_1}^t\frac{4}{\Clsi(\sigma)}\dd\sigma \biggr) + c \tau(t) + c\tau(t_1)\\
            & \leq c\delta + \KL(\mu_0|\pi_{\tau(0)})\delta + c \tau(0) \delta + c \delta + c\delta
        \end{aligned}
    \end{equation}
    picking $\delta$ sufficiently small concludes the proof.

    To obtain the worst case bound on the complexity we use the estimate $\Clsi\leq\wc$ in~\eqref{eq:final_conv_cont} to obtain
    \begin{equation}
        \begin{aligned}
            \KL(\mu_t|\pi)
            & \leq c(\tau(t) + \tau(t_1)) + \KL(\mu_0|\pi_{\tau(0)})\exp\biggl( -\int_0^t\frac{4}{\wc}\dd s \biggr) - c \int_{0}^{t_1} \exp\biggl( -\int_{s}^t\frac{4}{\wc}\dd\sigma \biggr)\dot{\tau}(s)\d s\\
            & \leq c(\tau(t) + \tau(t_1)) + \KL(\mu_0|\pi_{\tau(0)})\exp\biggl( -\frac{4t}{\wc} \biggr) - c \int_{0}^{t_1} \exp\biggl( -\frac{4(t-s)}{\wc}\dd\sigma \biggr)\dot{\tau}(s)\d s\\
            & \leq c(\tau(t) + \tau(t_1)) + \KL(\mu_0|\pi_{\tau(0)})\exp\biggl( -\frac{4t}{\wc} \biggr) + c\|\dot\tau\|_\infty \int_{0}^{t_1} \exp\biggl( -\frac{4(t-s)}{\wc} \biggr)\d s\\
            & \leq c(\tau(t) + \tau(t_1)) + \KL(\mu_0|\pi_{\tau(0)})\exp\biggl( -\frac{4t}{\wc} \biggr) + c\biggl[ \exp\biggl( -\frac{4(t-t_1)}{\wc} \biggr) - \exp\biggl( -\frac{4t}{\wc} \biggr) \biggr]\\
            & \leq c(\tau(t) + \tau(t_1)) + c \exp\biggl( -\frac{4(t-t_1)}{\wc} \biggr).
        \end{aligned}
    \end{equation}
    Therefore, in order to reach accuracy $\KL(\mu_t|\pi)\leq \epsilon$, ignoring constants, we require $\tau(t_1)=\Oc(\epsilon)$ and $t-t_1 = \Oc(\log(\epsilon^{-1}))$. We want to stress here again, that this estimate is very pessimistic as the benefits of annealing are not taken into account due to the uniform estimation of the \gls{lsi} constant.
\end{proof}

\subsubsection{Helper results}
\begin{lemma}\label{lemma:mu_t bounded moments}
    Let $p\in \N$.
    If $\mu_0$ admits a finite $4p$-th moment, then the $2p$-th moments of $\mu_t$ are bounded uniformly in $t$.
\end{lemma}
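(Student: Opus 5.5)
We apply \ito's formula to the Lyapunov function $V(x)=|x|^{2p}$ along the strong solution from \cref{thm:fokker planck}, use uniform dissipativity from \cref{Ass_drift} to obtain a differential inequality for $m_t\coloneqq\E|X_t|^{2p}$, and close with a Gronwall-type comparison.

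\textbf{Step 1 (\ito's formula).} We have $\nabla V(x)=2p|x|^{2p-2}x$ and $\Delta V(x)=2p(2p-2+d)|x|^{2p-2}$. Hence
\begin{equation*}
\dd |X_t|^{2p} = \Bigl(-2p|X_t|^{2p-2}\bigl\langle X_t,\nabla U_{\tau(t)}(X_t)\bigr\rangle + 2p(2p-2+d)|X_t|^{2p-2}\Bigr)\dd t + \sqrt{2}\,2p|X_t|^{2p-2}X_t^\top\dd W_t .
\end{equation*}

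\textbf{Step 2 (drift bound).} Uniform dissipativity gives $\langle x,\nabla U_\tau(x)\rangle\ge \dissi|x|^2-\dissii\1_{B_{\dissiii}(0)}(x)$. On the ball, $|x|^{2p-2}\le \dissiii^{2p-2}$. Therefore the drift is at most
\begin{equation*}
-2p\dissi|x|^{2p}+2p(2p-2+d)|x|^{2p-2}+2p\dissii\dissiii^{2p-2}.
\end{equation*}
By Young's inequality, $|x|^{2p-2}\le \eta|x|^{2p}+C_\eta$. Choosing $\eta$ small, the drift is bounded by $-p\dissi|x|^{2p}+C$, with $C$ independent of $t$.

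\textbf{Step 3 (martingale term).} We must show the stochastic integral has zero expectation, or at least that a localized version does. Here we use the finite $4p$-th moment of $\mu_0$. A crude Gronwall bound over finite horizons follows from the linear growth of $\nabla U_\tau$, which holds uniformly since $\lip<\infty$ and $|\nabla U_\tau(0)|$ is bounded uniformly. This bound gives $\sup_{s\le T}\E|X_s|^{4p}<\infty$ for every $T$. Consequently
\begin{equation*}
\E\int_0^T |X_s|^{4p-2}\dd s<\infty,
\end{equation*}
so the stochastic integral is a true martingale. The uniform bound on $|\nabla U_\tau(0)|$ needs a short justification from dissipativity plus Lipschitz continuity. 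Alternatively, one can localize with stopping times $\sigma_n=\inf\{t:|X_t|\ge n\}$ and pass $n\to\infty$ via Fatou's lemma. This step requires some care but is routine.

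\textbf{Step 4 (Gronwall).} Taking expectations gives $m_t-m_s\le\int_s^t(-p\dissi m_r+C)\dd r$. A Gronwall comparison in the style of \cref{lem:gronwall} then yields
\begin{equation*}
m_t\le e^{-p\dissi t}m_0+\frac{C}{p\dissi},
\end{equation*}
which is bounded uniformly in $t$.

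\textbf{Main obstacle.} The key point is that all constants are uniform in $\tau$, which \cref{Ass_drift} guarantees through $\dissi$, $\dissii$ and $\dissiii$. The main technical obstacle is Step 3, justifying that the martingale term vanishes in expectation. This is where the stronger $4p$-th moment assumption enters.
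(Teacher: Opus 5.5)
Your proof is correct and follows essentially the same route as the paper: \ito's formula for $|x|^{2p}$, the uniform dissipativity bound, absorption of the lower-order $|x|^{2p-2}$ term, and a Gr\"onwall argument giving a bound uniform in $t$. The paper absorbs that term by splitting at $|x|\le\alpha$ instead of using Young's inequality, and your Step 3 makes explicit the true-martingale justification (and where the $4p$-th moment enters) that the paper leaves implicit behind its finite-horizon bound $\E[\sup_{t\le T}|X_t|^{2p}]<\infty$.
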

\begin{proof}
    Any solution $(X_t)_t$ of \cref{eq:time-inhomogeneous Langevin diffusion} has the property that $\E[\sup_{0\leq t\leq T}|X_t|^{2p}]<\infty$ for any $T>0$~\citep[Exercise 4.5.5]{kloeden2012numerical}
    This bound together with a.e. continuity of the sample paths yields continuity of $t\mapsto \E[|X_t|^{2p}]$ by dominated convergence.
    By \ito's lemma and uniform dissipativity we have
    \begin{equation*}
        \begin{aligned}
            \d |X_t|^{2p} &= 2p|X_t|^{2(p-1)} \left\{(X_t)_i \partial_{x_i}\log p_{\tau(t)}(X_t) + d +2(p-1)\right\}\d t + \sqrt{8}p|X_t|^{2(p-1)}X_t^\top\d W_t\\
            &\leq 2p|X_t|^{2(p-1)} \left\{- \dissi_{\tau(t)} |X_t|^2 + \dissii_{\tau(t)} + d +2(p-1)\right\}\d t + \sqrt{8}p|X_t|^{2(p-1)}X_t^\top\d W_t.
        \end{aligned}
    \end{equation*}
    As a consequence it follows for any $t,h>0$
    \begin{equation}\label{eq:bdd_moments}
        \begin{aligned}
            \E[|X_{t+h}|^{2p}-|X_t|^{2p}] &\leq \E\left[\int_t^{t+h} 2p|X_s|^{2(p-1)} \left\{- \dissi_{\tau(t)} |X_s|^2 + \dissii_{\tau(t)} + d +2(p-1)\right\}\d s\right]\\
            &\leq \int_t^{t+h} - 2p\dissi_{\tau(t)}\E\left[|X_s|^{2p}\right] + 2p(\dissii_{\tau(t)} + d + 2(p-1))\E\left[|X_s|^{2(p-1)}\right]\d s\\
            &\leq \int_t^{t+h} - C_1\E\left[|X_s|^{2p}\right] + C_2\E\left[\1_{|X_s|\leq \alpha}|X_s|^{2(p-1)} + \1_{|X_s|> \alpha}|X_s|^{-2}|X_s|^{2p}\right]\d s\\
            &\leq \int_t^{t+h} - C_1\E\left[|X_s|^{2p}\right] + C_2\E\left[\alpha^{2(p-1)} + \alpha^{-2}|X_s|^{2p}\right]\d s\\
            &\leq \int_t^{t+h} -(C_1-\alpha^{-2}C_2)\E\left[|X_s|^{2p}\right] + C_2|\alpha|^{2(p-1)} \d s
        \end{aligned}
    \end{equation}
    where $C_1 = 2p\dissi_{\tau(t)}\geq 2p\dissi$, $C_2 = 2p(\dissii_{\tau(t)} + d + 2(p-1))\leq 2p(\dissii + d + 2(p-1))$, and $\alpha>0$ above is chosen sufficiently large so that $C_1-\alpha^{-2}C_2>0$.
    Since $t\mapsto\E[|X_t|^{2p}]$ is a.e. differentiable as an absolutely continuous function, dividing \cref{eq:bdd_moments} by $h>0$ and letting $h\rightarrow 0$ yields
    \begin{equation}
        \begin{aligned}
            \frac{\d }{\d t}\left(\E[|X_t|^{2p}]- \frac{C_2\alpha^{2(p-1)}}{C_1-\alpha^{-2}C_2}\right) &\leq  - (C_1-\alpha^{-2}C_2)\left( \E\left[|X_t|^{2p}\right] - \frac{C_2\alpha^{2(p-1)}}{C_1-\alpha^{-2}C_2}\right).
        \end{aligned}
    \end{equation}
    Grönwall's inequality then yields
    \begin{equation}\label{eq bdd moments eq}
        \begin{aligned}
            \E[|X_t|^{2p}] \leq  \left(\E[|X_0|^{2p}]- \frac{C_2|\alpha|^{2(p-1)}}{C_1-\alpha^{-2}C_2}\right)\exp\bigl( -(C_1-\alpha^{-2}C_2)t \bigr) + \frac{C_2|\alpha|^{2(p-1)}}{C_1-\alpha^{-2}C_2} .
        \end{aligned}
    \end{equation}
    which is bounded uniformly in $t$ by the bounds on $C_1$ and $C_2$.
\end{proof}
\begin{lemma}%
    \label{lemma:bdd_2_moment}
    All moments of $\pi_\tau$ are bounded uniformly in $\tau$.
\end{lemma}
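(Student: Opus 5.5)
The plan is to derive uniform moment bounds for $\pi_\tau$ directly from the uniform dissipativity in \cref{Ass_drift}. Two routes are available: a stationarity argument for the Langevin diffusion with frozen $\tau$, or a direct integration-by-parts on the density. I would take the second, since it only needs the first-order structure of $U_\tau$.

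Fix $\tau$ and $p\in\N$. First I need to know that all moments of $\pi_\tau$ are finite, so that the boundary terms vanish. For this, write $U_\tau(x)-U_\tau(0)=\int_0^1\langle \nabla U_\tau(sx),x\rangle\dd s$ and use dissipativity. For $s|x|\ge \dissiii$ we have $\langle \nabla U_\tau(sx),sx\rangle\ge \dissi s^2|x|^2$. On the remaining small range of $s$ the integrand is bounded below by $-\dissii/s$; since the constants $\dissii,\dissiii$ are uniform, this is better handled by the bounded $|\nabla U_\tau(sx)|\le |\nabla U_\tau(0)|+\lip\dissiii$. This gives $U_\tau(x)\ge U_\tau(0)+\tfrac{\dissi}{2}|x|^2 - C(1+|x|)$, where $C$ also involves $|\nabla U_\tau(0)|$. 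The density therefore has Gaussian tails, all moments are finite, and $|x|^k p_\tau(x)\to0$.

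Now integrate by parts:
\begin{equation*}
\int \langle x,\nabla U_\tau\rangle |x|^{2p-2}p_\tau\,\dd x = -\int \langle x|x|^{2p-2},\nabla p_\tau\rangle\dd x = \int \div\bigl(x|x|^{2p-2}\bigr)p_\tau\,\dd x = (d+2p-2)\int|x|^{2p-2}\dd\pi_\tau .
\end{equation*}
Applying dissipativity to the left-hand side gives
\begin{equation*}
\dissi M_{2p}\le \dissii\dissiii^{2p-2}+(d+2p-2)M_{2p-2},
\qquad M_k\coloneqq\int|x|^k\dd\pi_\tau .
\end{equation*}
Here $\dissi,\dissii,\dissiii$ are the uniform constants, and the truncation term is bounded by $\dissii\dissiii^{2p-2}$. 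Starting from $M_0=1$, induction on $p$ yields bounds on all even moments that are independent of $\tau$. Odd moments then follow from Jensen's or Hölder's inequality.

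The main obstacle is justifying that the boundary terms vanish uniformly, and in particular that finiteness of the moments does not itself require uniform control of $U_\tau(0)$ or $\nabla U_\tau(0)$. The recursion only uses finiteness for each fixed $\tau$, not uniformity, so the per-$\tau$ Gaussian tail bound suffices. As a rigorous alternative I would multiply by a cutoff $\chi_R$ and pass to the limit $R\to\infty$ using those tails.
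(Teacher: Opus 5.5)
Your argument is correct, but it takes a different route from the paper's. The paper fixes $\tau_0$ and considers the Langevin diffusion for the frozen target $\pi_{\tau_0}$ started at stationarity. It then reuses the It\^{o}/Gr\"onwall moment bound \cref{eq bdd moments eq} from \cref{lemma:mu_t bounded moments}, and lets $t\to\infty$ to remove the initial-condition term. This leaves $\int|x|^{2p}\,\dd\pi_{\tau_0}\le C_2\alpha^{2(p-1)}/(C_1-\alpha^{-2}C_2)$, which depends only on the uniform constants.

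Your integration-by-parts identity is the static counterpart of that dynamic computation. It is exactly $\int \mathcal{L}|x|^{2p}\,\dd\pi_\tau=0$ for the frozen generator $\mathcal{L}f=\Delta f-\langle\nabla U_\tau,\nabla f\rangle$, since $\mathcal{L}|x|^{2p}=2p|x|^{2p-2}\bigl(d+2p-2-\langle x,\nabla U_\tau(x)\rangle\bigr)$.

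Your route has several advantages:
\begin{itemize}
\item It is elementary and self-contained: no SDE well-posedness, no appeal to stationarity of $\pi_{\tau_0}$ under the diffusion, and no pathwise moment bound from the literature.
\item It actually proves the per-$\tau$ finiteness of moments that the paper dismisses as immediate. The paper's route needs this too, because \cref{lemma:mu_t bounded moments} requires finite moments of the initial law, which here is $\pi_{\tau_0}$.
\item It yields a clean explicit recursion, for example $M_2\le(d+\dissii)/\dissi$, rather than the cruder splitting at $\alpha$.
\end{itemize}
The paper's route buys brevity by recycling a lemma it needs anyway for $\mu_t$.

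One small remark: your tail estimate is slightly lossy. The small-$s$ range has length at most $\dissiii/|x|$, so its contribution is bounded below by the constant $-\dissiii\bigl(|\nabla U_\tau(0)|+\lip\dissiii\bigr)$. This gives $U_\tau(x)\ge U_\tau(0)+\tfrac{\dissi}{2}|x|^2-C$. Your weaker $C(1+|x|)$ version already suffices for the argument.
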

\begin{proof}
    The dissipativity and Lipschitz continuity of \( \nabla U_\tau \) immediately implies that $\pi_\tau$ has finite $p$-th moments for all $p \in \mathbb{N}$ and it remains to show that this holds uniformly in \( \tau \).
    To this end, let $\tau_0>0$ be arbitrary and fix $\tau(t)=\tau_0$, that is we consider now the Langevin diffusion for the fixed target $\pi_{\tau_0}$. Initializing $(X_t)_t$ with the stationary distribution $X_0\sim \pi_{\tau_0}$, we find that $X_t\sim \pi_{\tau_0}$ for all $t$. Then \cref{eq bdd moments eq} yields
    \begin{equation}
        \begin{aligned}
            \int |x|^{2p}\dd \pi_{\tau_0}(x)\leq  \left(\E[|X_0|^{2p}]- \frac{C_2|\alpha|^{2(p-1)}}{C_1-\alpha^{-2}C_2}\right)\exp\bigl( -(C_1-\alpha^{-2}C_2)t \bigr) + \frac{C_2|\alpha|^{2(p-1)}}{C_1-\alpha^{-2}C_2}.
        \end{aligned}
    \end{equation}
    Since the above is true for all $t$, we may let $t\rightarrow \infty$ and find $\int |x|^{2p}\dd \pi_{\tau_0}(x)\leq \frac{C_2|\alpha|^{2(p-1)}}{C_1-\alpha^{-2}C_2}$ which is bounded uniformly in $\tau$.
\end{proof}

\subsection{Discretization Analysis}
First we note that the discretization \cref{eq:discretization} may be interpreted as a solution of the \gls{sde}
\begin{equation}\label{eq:discrete sde}
    \dd \overline{X}_t = -\overline{\nabla U}_{t}(\overline{X})\dd t + \sqrt{2}\dd W_t
\end{equation}
where for any function $x:(0,\infty)\rightarrow\R^d$, $\overline{\nabla U}_{t}(x) = \sum_k \nabla U_{t_{k}}(x_{t_k}) \1_{[t_k,t_{k+1})}(t)$.
In the following we will denote $\hat \mu_t \coloneqq \law(\overline{X}_t)$ and in the case $t=t_k$, in a slight abuse of notation we will write $\hat \mu_k = \hat \mu_{t_k}$. Moreover, we denote the density as $\hat{q}_t=\frac{\dd \hat\mu_t}{\dd \lambda}$.
\subsubsection{Proof of~\cref{lemma:discretization descent}}%
\label{proof:lemma discrete one step}
The proof is similar to Lemma 3 in~\cite{vempala2019rapid}.
\begin{proof}
    We consider for simplicity of notation the step from $k=0$ to $k=1$. We find that for $t\in (0,t_1]$ the density of $\overline{X}_t|\overline{X}_{0}=x_0$ is $\hat q_{t|0}(x|x_0) = \frac{1}{(4\pi t)^{d/2}}\exp\bigl( -\frac{1}{4t}|x+t\nabla U_{t_0}(x_0)-x_0|^2 \bigr)$. The unconditional density can therefore be expressed as $\hat{q}_t(x_t)= \int \hat q_{t|0}(x|x_0)\hat q_{0}(x_0)\dd x_0$. 
    It is well-known (\cf~\cref{lemma:regular diffusion}), that $\hat q_{t|0}(x|x_0)$ is a classical solution of the diffusion equation
    \begin{equation}
        \partial_t \hat q_{t|0}(x|x_0) = \div(\nabla U_{\tau(t_0)}(x_0)\hat q_{t|0}(x|x_0)) +  \Delta \hat q_{t|0}(x|x_0)
    \end{equation}
    where the divergence and Laplacian are with respect to $x$. As a consequence it follows that $\hat{q}_t(x_t)$ satisfies in the classical sense the \gls{pde}
    \begin{equation}
        \begin{aligned}
            \partial_t \hat{q}_t(x) &= \partial_t \int \hat q_{t|0}(x|x_0)\hat q_{0}(x_0)\dd x_0 \\
            &= \int \left\{\div(\nabla U_{\tau(t_0)}(x_0)\hat q_{t|0}(x|x_0)) +  \Delta \hat q_{t|0}(x|x_0)\right\}\hat q_{0}(x_0)\dd x_0 \\
            &= \div \int \nabla U_{\tau(t_0)}(x_0)\hat q_{t|0}(x|x_0) \hat  q_{0}(x_0)\dd x_0 +  \Delta \int \hat q_{t|0}(x|x_0)\hat q_{0}(x_0)\dd x_0 \\
            &= \div \left(\int \nabla U_{\tau(t_0)}(x_0)\hat q_{0|t}(x_0|x) \dd x_0 \hat q_{t}(x)\right)+  \Delta \hat q_{t}(x) \\
            &= \div \left(\E\left[ \nabla U_{\tau(t_0)}(X_0)\middle|X_t=x\right] \hat q_{t}(x)\right)+  \Delta \hat q_{t}(x) \\
        \end{aligned}
    \end{equation}
    where we applied dominated convergence multiple times, first to pull $\partial_t$ into the integral and afterward the divergence and the Laplacian out of the integral. This is allowed since the functions
    \begin{equation*}
        \partial_t \hat q_{t|0}(x|x_0)\hat q_{0}(x_0),\; \nabla U_{\tau(t_0)}(x_0)\partial_{x_i} \hat q_{t|0}(x|x_0)\hat q_{0}(x_0), \;\partial_{x_i}^2\hat q_{t|0}(x|x_0)\hat q_{0}(x_0)
    \end{equation*}
    all admit $x_0$-integrable upper bounds locally uniformly with respect to the respective variable of differentiation. Indeed, for the first term we note for every $t\in (0,t_1)$ due to the exponential nature of $q_{t|0}(x|x_0)$ and the fact that $x_0\mapsto x_0-t\nabla U_{\tau(t_0)}(x_0)$ grows linearly for $t<\frac{\dissi_0}{\lip_0^2}\leq\frac{1}{\lip_0}$, there exists $\epsilon>0$ such that 
    \begin{equation}
        \sup_{s\in (t-\epsilon,t+\epsilon)}\sup_{x_0} |\partial_t \hat q_{t|0}(x|x_0)|<\infty
    \end{equation}
    so that $x_0\mapsto \partial_t \hat q_{t|0}(x|x_0)\hat q_{0}(x_0)$ admits an integrable upper bound. Similar arguments apply for the other terms.
    Let us for simplicity denote $b(x) = \E\left[ \nabla U_{\tau(t_0)}(X_0)\middle|X_t=x\right]$.
    Next we claim that, again by dominated convergence, similar to the continuous diffusion we have 
    \begin{equation}
        \begin{aligned}
            \frac{\dd}{\dd t}\KL(\hat \mu_t|\pi_{\tau(t_0)}) 
            = &\int (\partial_t \hat{q}_t(x_t))\log\frac{\hat{q}_t(x_t)}{p_{\tau(t_0)}(x_t)} + \hat{q}_t(x_t) \partial_t \log\frac{\hat{q}_t(x_t)}{p_{\tau(t_0)}(x_t)}\dd x_t.
        \end{aligned}
    \end{equation}
    To this end, take $\chi_R\in C^\infty_c(\R^d)$ such that $0\leq\chi_R\leq 1$, $\chi_R(x)=1$ for $x\in B_R(0)$, $\chi_R(x)=0$ for $x\notin B_R(0)$ and $\partial_{x_i}\chi_R(x)$ is bounded uniformly with respect to $x$ and $R$. As in the proof of~\cref{lemma_time_deriv_KL} we find for $0<s_1<s_2<t_1$ using the fundamental theorem of calculus
    \begin{equation}\label{eq:discrete}
        \begin{aligned}
        &\int \chi_R(x) \hat q_{s_2}(x)\log\frac{\hat q_{s_2}(x)}{p_{\tau(t_0)}}\dd x - \int \chi_R(x) \hat q_{s_1}(x)\log\frac{\hat q_{s_1}(x)}{p_{\tau(t_0)}}\dd x\\
            &\quad=\iint_{s_1}^{s_2} \chi_R(x) \partial_t \hat{q}_t(x)\log\frac{\hat{q}_t(x)}{p_{\tau(t_0)}(x)} + \chi_R(x)  \partial_t\hat{q}_t(x)\dd t\dd x\\
            &\quad= \iint \chi_R(x) \left\{\partial_{x_i} \left(b_i(x) \hat q_{t}(x)\right)+  \partial_{x_i} \partial_{x_i}  \hat q_{t}(x)\right\}\log\frac{\hat{q}_t(x)}{p_{\tau(t_0)}(x)} + \chi_R(x)  \partial_t\hat{q}_t(x)\dd t\dd x\\
            &\quad=-\iint \left\{b(x) \hat q_{t}(x)+  \partial_{x_i}  \hat q_{t}(x)\right\}\partial_{x_i} \left(\chi_R(x)\log\frac{\hat{q}_t(x)}{p_{\tau(t_0)}(x)}\right) + \chi_R(x)  \partial_t\hat{q}_t(x)\dd t\dd x\\
            &\quad=-\iint \left\{b(x) \hat q_{t}(x)+  \partial_{x_i} \hat q_{t}(x)\right\}\left\{ \partial_{x_i} \chi_R(x)\log\frac{\hat{q}_t(x)}{p_{\tau(t_0)}(x)} + \chi_R(x)\partial_{x_i}\log\frac{\hat{q}_t(x)}{p_{\tau(t_0)}(x)}\right\}\dd t\dd x\\
            &\qquad+ \iint \chi_R(x) \partial_t \hat{q}_t(x)\dd t\dd x.
        \end{aligned}
    \end{equation}
    We want to let $R\rightarrow \infty$.
    For the third term we find
    \begin{equation}
        \iint \chi_R(x) \partial_t \hat{q}_t(x)\dd t\dd x
        = \int \chi_R(x) \int \partial_t \hat{q}_t(x)\dd t\dd x
        = \int \chi_R(x) \left( \hat{q}_{s_2}(x) - \hat{q}_{s_1}(x)\right)\dd x,
    \end{equation}
    which tends to zero as $R\rightarrow \infty$ by dominated or monotone convergence. To apply dominated convergence to the remaining terms, it suffices to show that the expression
    \begin{equation}
        \left\{|b(x) \hat q_{t}(x)|+  |\partial_{x_i} \hat q_{t}(x)|\right\}\left\{ \left|\log\frac{\hat{q}_t(x)}{p_{\tau(t_0)}(x)}\right| + \left|\partial_{x_i}\log\frac{\hat{q}_t(x)}{p_{\tau(t_0)}(x)}\right|\right\}
    \end{equation}
    is integrable with respect to $(x,t)$. Since locally in $t$ the right term can be bounded by $c(|x|^2+1)$ for some $c>0$ we estimate
    \begin{equation}\label{eq:discrete2}
        \begin{aligned}
            &\iint \big\{|b(x) \hat q_{t}(x)|+ |\partial_{x_i} \hat q_{t}(x)|\big\}\left\{ \left|\log\frac{\hat{q}_t(x)}{p_{\tau(t_0)}(x)}\right| + \left|\chi_R(x)\partial_{x_i}\log\frac{\hat{q}_t(x)}{p_{\tau(t_0)}(x)}\right|\right\} \dd x\dd t\\
            &\quad\leq \iint \left\{|b(x) \hat q_{t}(x)|+  |\partial_{x_i} \hat q_{t}(x)|\right\}c(|x|^2+1) \dd x\dd t\\
            &\quad\leq \iint \left\{\int |\nabla U_{\tau(t_0)}(x_0)|\hat q_{t|0}(x|x_0)\hat q_{0} (x_0)\dd x_0+  |\partial_{x_i} \hat q_{t}(x)|\right\}c(|x|^2+1) \dd x\dd t.
        \end{aligned}
    \end{equation}
    Regarding the first term we find with the transformation $x=(x_0-t\nabla U_{\tau(t_0)}(x_0)) + z$ the bound
    \begin{equation}
        \begin{aligned}
            &\iiint |\nabla U_{\tau(t_0)}(x_0)|\hat q_{t|0}(x|x_0)c(|x|^2+1) \hat q_{0} (x_0)\dd x_0\dd x\dd t \\
            &\quad=\iiint |\nabla U_{\tau(t_0)}(x_0)|\Nc(x; x_0-t\nabla U_{\tau(t_0)}(x_0), 2t) c(|x|^2+1) \hat q_{0} (x_0)\dd x_0\dd x\dd t\\
            &\quad=\iiint |\nabla U_{\tau(t_0)}(x_0)|\Nc(z; 0, 2t) c(|x_0-t\nabla U_{\tau(t_0)}(x_0) + z|^2+1) \hat q_{0} (x_0)\dd x_0\dd z\dd t\\
            &\quad \leq\iiint |\nabla U_{\tau(t_0)}(x_0)|\Nc(z; 0, 2t) c(2|x_0-t\nabla U_{\tau(t_0)}(x_0)|^2 + 2|z|^2+1) \hat q_{0} (x_0)\dd x_0\dd z\dd t\\
            &\quad \leq\iint |\nabla U_{\tau(t_0)}(x_0)|c(2|x_0-t\nabla U_{\tau(t_0)}(x_0)|^2 + 4t+1) \hat q_{0} (x_0)\dd x_0 \dd t
            <\infty
        \end{aligned}
    \end{equation}
    where finiteness follows under the assumption that $\hat q_0$ admits a finite third moment and Lipschitz continuity of $\nabla U_{\tau(t_0)}$.
    Regarding the second term in \cref{eq:discrete2}, on the other hand, we compute 
    \begin{equation}\label{eq:discrete3}
        \begin{aligned}
            &\iint |\partial_{x_i} \hat q_{t}(x)|c(|x|^2+1) \dd x\dd t\\
            &\quad\leq\iiint \frac{1}{2t}\left|x_i-((x_0)_i-t \partial_{x_i}U_{\tau(t_0)}(x_0)\right|\Nc(x; x_0-t\nabla U_{\tau(t_0)}(x_0), 2t)c(|x|^2+1) \hat q_0(x_0)\dd x_0\dd x\dd t\\
            &\quad\leq \iiint \frac{1}{t}\Nc(x; x_0-t\nabla U_{\tau(t_0)}(x_0), 2t)c(|x|^3+|x|^2|x_0|+|x_0|+1) \hat q_0(x_0)\dd x_0\dd x\dd t\\
            &\quad\leq c \iiint \frac{1}{t}\Nc(z; 0, 2t)(|x_0-t\nabla U_{\tau(t_0)}(x_0)+z|^3+|x_0-t\nabla U_{\tau(t_0)}(x_0)+z|^2|x_0|+|x_0|+1) \hat q_0(x_0)\dd x_0\dd x\dd t <\infty
        \end{aligned}
    \end{equation}
    again since the third moment of $q_0$ is finite.
    Thus, letting $R\rightarrow \infty$ in~\cref{eq:discrete} yields
    \begin{equation}
        \begin{aligned}
        \int \hat q_{s_2}(x)\log\frac{\hat q_{s_2}(x)}{p_{\tau(t_0)}}\dd x - \int \hat q_{s_1}(x)\log\frac{\hat q_{s_1}(x)}{p_{\tau(t_0)}}\dd x &= -\iint \left\{b(x) \hat q_{t}(x)+  \partial_{x_i} \hat q_{t}(x)\right\}\partial_{x_i}\log\frac{\hat{q}_t(x)}{p_{\tau(t_0)}(x)}\dd t\dd x\\
            &= -\iint \hat q_{t}(x)\left\{b(x) +  \partial_{x_i} \log \hat q_{t}(x)\right\}\partial_{x_i}\log\frac{\hat{q}_t(x)}{p_{\tau(t_0)}(x)}\dd t\dd x.
        \end{aligned}
    \end{equation}
    This, in turn, implies that for a.e. $t$
    \begin{equation}
        \begin{aligned}
        \frac{\dd}{\dd t}\KL &(\hat \mu_t|\pi_{\tau(t_0)})
            = -\int \hat q_{t}(x)\left\{b(x) +  \partial_{x_i} \log \hat q_{t}(x)\right\}\partial_{x_i}\log\frac{\hat{q}_t(x)}{p_{\tau(t_0)}(x)}\dd x\\
            = &-\int \hat q_{t}(x)\left\{b(x) + \partial_{x_i} \log \hat p_{\tau(t_0)}(x) - \partial_{x_i} \log \hat p_{\tau(t_0)}(x) + \partial_{x_i} \log \hat q_{t}(x)\right\}\partial_{x_i}\log\frac{\hat{q}_t(x)}{p_{\tau(t_0)}(x)}\dd x\\
            = &-\int \hat q_{t}(x)\left\{b(x) - \partial_{x_i} U_{\tau(t_0)}(x)\right\}\partial_{x_i}\log\frac{\hat{q}_t(x)}{p_{\tau(t_0)}(x)}\dd x -\int \left|\partial_{x_i}\log\frac{\hat{q}_t(x)}{p_{\tau(t_0)}(x)}\right|^2\dd x\\
        \end{aligned}
    \end{equation}
    The first integral can be estimated as
    \begin{equation}
        \begin{aligned}
            &\bigg|\int \hat q_t(x) \left\{ b_i(x) -\partial_{x_i} U_{\tau(t_0)}(x)\right\}\partial_{x_i} \log\frac{\hat{q}_t(x)}{p_{\tau(t_0)}(x)} \dd x\bigg| \\
            &\quad=\left|\int \hat q_t(x) \left\{\int \partial_{x_i} U_{\tau(t_0)}(x_0)\hat q_{0|t}(x_0|x)\dd x_0 -\partial_{x_i} U_{\tau(t_0)}(x)\right\} \partial_{x_i} \log\frac{\hat{q}_t(x)}{p_{\tau(t_0)}(x)} \dd x\right|\\
            &\quad\leq\iint \left|\partial_{x_i} U_{\tau(t_0)}(x_0) -\partial_{x_i} U_{\tau(t_0)}(x)\right| \left|\partial_{x_i} \log\frac{\hat{q}_t(x)}{p_{\tau(t_0)}(x)} \right| \hat q_t(x)\hat q_{0|t}(x_0|x)\dd x_0\dd x\\
            &\quad\leq\iint \lip_{\tau(t_0)} \left|x_0-x\right|\left|\nabla\log\frac{\hat{q}_t(x)}{p_{\tau(t_0)}(x)} \right| \hat q_{0,t}(x_0,x)\dd x_0\dd x\\
            &\quad\leq\iint \frac{1}{2\epsilon}\lip_{\tau(t_0)}^2 \left|x_0-x\right|^2 \hat q_{0,t}(x_0,x)\dd x_0\dd x + \iint \frac{\epsilon}{2}\left|\nabla \log\frac{\hat{q}_t(x)}{p_{\tau(t_0)}(x)} \right|^2 \hat q_{0,t}(x_0,x)\dd x_0\dd x\\
            &\quad\leq\iint \frac{1}{2\epsilon}\lip_{\tau(t_0)}^2 \left|x_0-x\right|^2 \hat q_{0,t}(x_0,x)\dd x_0\dd x + \frac{\epsilon}{2} \int \left|\nabla \log\frac{\hat{q}_t(x)}{p_{\tau(t_0)}(x)} \right|^2 \hat q_{t}(x)\dd x.
        \end{aligned}
    \end{equation}
    The first term can be expressed using $\E[|X_t-X_0|^2] = \E[ | -t\nabla U_{\tau(t_0)}(X_0) + \sqrt{2}W_t|^2 ] = t^2\E[ | \nabla U_{\tau(t_0)}(X_0) |^2] + 2td$. 
    Denoting for any $\tau$, $x_\tau^*$ a minimizer of $U_\tau$ it follows, since $|x_\tau^*|^2\leq \dissii_\tau/\dissi_\tau$ by dissipativity
    \begin{equation}
        \begin{aligned}
            \E[ | \nabla U_{\tau(t_0)}(X_0) |^2] = \E[ | \nabla U_{\tau(t_0)}(X_0) - \nabla U_{\tau(t_0)}(x^*_{\tau(t_0)})|^2] 
            &\leq \lip_{\tau(t_0)}^2\E[ |X_0 - x^*_{\tau(t_0)}|^2] \\
            &\leq \lip_{\tau(t_0)}^2\E[ |X_0|^2] +2|x^*_{\tau(t_0)}|^2\\
            &\leq \lip_{\tau(t_0)}^2\E[ |X_0|^2] +2\frac{\dissii_{\tau(t_0)}}{\dissi_{\tau(t_0)}}\\
        \end{aligned}
    \end{equation}
    and in the general case for $k\in\N$, $\E[|X_t-X_{t_k}|^2]\leq 2d(t-t_k) + (t-t_k)^2(\lip_{\tau(t_k)}^2\E[ |X_{t_k}|^2] +2\dissii_{\tau(t_k)}/\dissi_{\tau(t_k)})$. Since by~\cref{lem_second moments of discretization remain bounded}, $\sup_k \E[ |X_{t_k}|^2]<\infty$, 
    Using also \cref{lemma:lsi_KL} it follows for $\epsilon=1$ that
    \begin{equation}
        \begin{aligned}
            \frac{\dd}{\dd t}\KL(\hat \mu_t|\pi_{\tau(t_0)}) 
            \leq -\frac{2}{\Clsi(t_0)}\KL(\hat \mu_t|\pi_{\tau(t_0)}) + c t.
        \end{aligned}
    \end{equation}
    By \cref{lem:gronwall} we find
    \begin{equation}
        \begin{aligned}
            \KL(\hat \mu_{t_1}|\pi_{\tau(t_0)})
            &\leq \KL(\hat \mu_0|\pi_{\tau(t_0)})\exp\biggl( -\frac{2\step_0}{\Clsi(t_0)} \biggr) + c\int_{t_0}^{t_1} \exp\biggl( -\frac{2(t_1-s)}{\Clsi(t_0)} \biggr)s\dd s\\
            &\leq \KL(\hat \mu_0|\pi_{\tau(t_0)})\exp\biggl( -\frac{2\step_0}{\Clsi(t_0)} \biggr) + c\step_0^2.
        \end{aligned}
    \end{equation}
    concluding the proof.
\end{proof}

\subsubsection{Proof of \cref{thm discrete final bound}}\label{proof:thm discrete final bound}
\begin{proof}
    By~\cref{lemma:discretization descent} we find
    \begin{equation}\label{eq:discretization final1}
        \begin{aligned}
            \KL(\hat{\mu}_{k+1}|&\pi_{\tau(t_{k})}) 
            \leq \KL(\hat \mu_{t_k}|\pi_{\tau(t_{k})})\exp\biggl( -\frac{2\step_k}{\Clsi(t_{k})} \biggr) + c \step_k^2\\
            &\leq \left(\KL(\hat \mu_{t_k}|\pi_{\tau(t_{k})})-\KL(\hat \mu_{t_{k}}|\pi_{\tau(t_{k-1})})+\KL(\hat \mu_{t_k}|\pi_{\tau(t_{k-1})})\right)\exp\biggl( -\frac{\step_k}{2\Clsi(t_{k})} \biggr) + c \step_k^2\\
            &\leq \KL(\hat \mu_{t_k}|\pi_{\tau(t_{k-1})})\exp\biggl( -\frac{2\step_k}{\Clsi(t_{k})} \biggr) + \int \hat q_{k}\left|\log \frac{p_{\tau(t_{k-1})}}{p_{\tau(t_{k})}}\right|\dd x\, \exp\biggl( -\frac{2\step_k}{\Clsi(t_{k})} \biggr) + c \step_k^2.
        \end{aligned}
    \end{equation}
    Using \cref{lem_second moments of discretization remain bounded,lem:potential and partition are Lipschitz in time} we find that $\int \hat{q}_{k}(x)\log\frac{p_{\tau(t_{k-1})}(x)}{p_{\tau(t_{k})}(x)}\dd x\leq c|\tau(t_{k})-\tau(t_{k-1})|$. Inserting into \cref{eq:discretization final1} yields
    \begin{equation}
        \begin{aligned}
            \KL(\hat{\mu}_{k+1}|\pi_{\tau(t_{k})})
            &\leq \KL(\hat \mu_{t_k}|\pi_{\tau(t_{k-1})})\exp\biggl( -\frac{2\step_k}{\Clsi(t_{k})} \biggr) + c|\tau(t_{k})-\tau(t_{k-1})|\exp\biggl( -\frac{2\step_k}{\Clsi(t_{k})} \biggr) + c \step_k^2.
        \end{aligned}
    \end{equation}
    Solving the recursion, we find
    \begin{equation}
        \begin{aligned}
            \KL(\hat{\mu}_{k+1}|\pi_{\tau(t_k)}) 
            \leq \KL(\hat \mu_{1}|\pi_{\tau(t_0)})\exp\biggl( -\sum_{i=1}^k\frac{2\step_i}{\Clsi(t_i)} \biggr) 
            &+ c\sum_{i=0}^{k-1}\exp\biggl( -\sum_{j=0}^i\frac{2\step_{k-j}}{\Clsi(t_{k-j})} \biggr)|\tau(t_{k-i})-\tau(t_{k-1-i})|\\
            &+ c\sum_{i=0}^{k-1}\step^2_{k-i}\exp\biggl( -\sum_{j=0}^{i-1}\frac{2\step_{k-j}}{\Clsi(t_{k-j})} \biggr).
        \end{aligned}
    \end{equation}
    The decomposition $\KL(\hat{\mu}_{k+1}|\pi) = \KL(\hat{\mu}_{k+1}|\pi_{\tau(t_k)}) + \KL(\hat{\mu}_{k+1}|\pi) - \KL(\hat{\mu}_{k+1}|\pi_{\tau(t_k)})$ then leads to
    \begin{equation}
        \begin{aligned}
            \KL(\hat{\mu}_{k+1}|\pi) 
            \leq &\KL(\hat \mu_{1}|\pi_{\tau(t_0)})\exp\biggl( -\sum_{i=1}^k\frac{2\step_i}{\Clsi(t_i)} \biggr) 
            + c\sum_{i=0}^{k-1}\exp\biggl( -\sum_{j=0}^i\frac{2\step_{k-j}}{\Clsi(t_{k-j})} \biggr)|\tau(t_{k-i})-\tau(t_{k-1-i})|\\
            &+ c\sum_{i=0}^{k-1}\step^2_{k-i}\exp\biggl( -\sum_{j=0}^{i-1}\frac{2\step_{k-j}}{\Clsi(t_{k-j})} \biggr) + \int \hat q_{k+1} \log \frac{p}{p_{\tau(t_k)}}\dd x\\
            \leq &\KL(\hat \mu_{1}|\pi_{\tau(t_0)})\exp\biggl( -\sum_{i=1}^k\frac{2\step_i}{\Clsi(t_i)} \biggr) 
            + c\sum_{i=0}^{k-1}\exp\biggl( -\sum_{j=0}^i\frac{2\step_{k-j}}{\Clsi(t_{k-j})} \biggr)|\tau(t_{k-i})-\tau(t_{k-1-i})|\\
            &+ c\sum_{i=0}^{k-1}\step^2_{k-i}\exp\biggl( -\sum_{j=0}^{i-1}\frac{2\step_{k-j}}{\Clsi(t_{k-j})} \biggr) + c\tau(t_k)\\
        \end{aligned}
    \end{equation}
    where we used~\cref{lem:potential and partition are Lipschitz in time} once more for the last inequality.
\end{proof}

\subsubsection{Proof of~\cref{corr discrete convergence}}\label{proof:corr discrete convergence}
\begin{proof}
By changing the summation indices,~\cref{thm discrete final bound} leads to 
    \begin{equation}
        \begin{aligned}
            \KL(\hat{\mu}_{k+1}|\pi) 
            \leq &\KL(\hat \mu_{1}|\pi_{\tau(t_0)})\exp\biggl( -\sum_{i=1}^k\frac{2\step_i}{\Clsi(t_i)} \biggr) 
            + c\sum_{i=1}^{k}\exp\biggl( -\sum_{j=i}^k\frac{2\step_{j}}{\Clsi(t_{j})} \biggr)|\tau(t_{i})-\tau(t_{i-1})|\\
            &+ c\sum_{i=1}^{k}\step^2_{i}\exp\biggl( -\sum_{j=i+1}^{k}\frac{2\step_{j}}{\Clsi(t_{j})} \biggr) + c\tau(t_k)\\
        \end{aligned}
    \end{equation}
    It follows for any $1\leq k'\leq k$
    \begin{equation}\label{eq discrete proof}
        \begin{aligned}
            \KL(\hat{\mu}_{k+1}|\pi) 
            \leq &\KL(\hat \mu_{1}|\pi_{\tau(t_0)})\exp\biggl( -\sum_{i=1}^k\frac{2\step_i}{\Clsi(t_i)} \biggr) 
            + c\sum_{i=1}^{k'}\exp\biggl( -\sum_{j=i}^k\frac{2\step_{j}}{\Clsi(t_{j})} \biggr)|\tau(t_{i})-\tau(t_{i-1})|\\
            &+ c\sum_{i=k'+1}^{k}\exp\biggl( -\sum_{j=i}^k\frac{2\step_{j}}{\Clsi(t_{j})} \biggr)|\tau(t_{i})-\tau(t_{i-1})|\\
            &+ c\sum_{i=1}^{k'}\step^2_{i}\exp\biggl( -\sum_{j=i+1}^{k}\frac{2\step_{j}}{\Clsi(t_{j})} \biggr) + c\sum_{i=k'+1}^{k}\step^2_{i}\exp\biggl( -\sum_{j=i+1}^{k}\frac{2\step_{j}}{\Clsi(t_{j})} \biggr) + c\tau(t_k)\\
            \leq &\KL(\hat \mu_{1}|\pi_{\tau(t_0)})\exp\biggl( -\sum_{i=1}^k\frac{2\step_i}{\Clsi(t_i)} \biggr) 
            + c\exp\biggl( -\sum_{j=k'}^k\frac{2\step_{j}}{\Clsi(t_{j})} \biggr) \\
            &+ c |\tau(t_{k})-\tau(t_{k'})|\\
            &+ c\exp\biggl( -\sum_{j=k'+1}^{k}\frac{2\step_{j}}{\Clsi(t_{j})} \biggr)\sum_{i=1}^{k'}\step^2_{i} +c \sum_{i=k'+1}^{k}\step^2_{i} + c\tau(t_k)\\
        \end{aligned}
    \end{equation}
    By assumption, $h_k$ is square summable but not summable. Let $\delta>0$ be arbitrary. Choose first $k'$ sufficiently large so that $\tau(k')<\delta$ and $\sum_{i=k'+1}^{\infty}\step^2_{i}<\delta$. Then choose $k>k'$ such that $\exp\biggl( -\sum_{j=k'}^k\frac{2\step_{j}}{\Clsi(t_{j})} \biggr)<\delta$. I follows that for such $k$, $\KL(\hat{\mu}_{k+1}|\pi) <c\delta$ for some $c$ concluding the proof.

    To obtain the worst case complexity bound for constant step sizes, similarly as above, but inserting $\step_i = \step$ for all $i$ we find for any $1\leq k'\leq k$
    \begin{equation}
        \begin{aligned}
            \KL(\hat{\mu}_{k+1}|\pi) 
            \leq &\KL(\hat \mu_{1}|\pi_{\tau(t_0)})\exp\biggl( -\sum_{i=1}^k\frac{2k\step}{\wc} \biggr) 
            + c\sum_{i=1}^{k}\exp\biggl( -\frac{2(k-i+1)\step}{\wc} \biggr)|\tau(t_{i})-\tau(t_{i-1})|\\
            &+ c\sum_{i=1}^{k}\step^2\exp\biggl( -\frac{2(k-i)\step}{\wc} \biggr) + c\tau(t_k)\\
            \leq &\KL(\hat \mu_{1}|\pi_{\tau(t_0)})\exp\biggl( -\sum_{i=1}^k\frac{2k\step}{\wc} \biggr) 
            + c\sum_{i=1}^{k'}\exp\biggl( -\frac{2(k-i+1)\step}{\wc} \biggr)|\tau(t_{i})-\tau(t_{i-1})|\\
            &+ c\sum_{i=k'+1}^{k}\exp\biggl( -\frac{2(k-i+1)\step}{\wc} \biggr)|\tau(t_{i})-\tau(t_{i-1})| + c\sum_{i=1}^{k}\step^2\exp\biggl( -\frac{2(k-i)\step}{\wc} \biggr) + c\tau(t_k)\\
        \end{aligned}
    \end{equation}
    Solving the geometric sums then leads to 
    \begin{equation}
        \begin{aligned}
            \KL(\hat{\mu}_{k+1}|\pi) 
            \leq &\KL(\hat \mu_{1}|\pi_{\tau(t_0)})\exp\biggl( -\frac{2k\step}{\wc} \biggr) 
            + c\exp\biggl( -\frac{2(k-k'+1)\step}{\wc} \biggr)|\tau(t_{k'})-\tau(t_{0})|\\
            &+ c|\tau(t_{k})-\tau(t_{k'})|\\
            &+ c\step^2\frac{\exp\biggl( \frac{2\step}{\wc} \biggr)-\exp\biggl( -\frac{2k\step}{\wc} \biggr)}{\exp\biggl( \frac{2\step}{\wc} \biggr)-1} + c\tau(t_k)\\
            \leq & c\exp\biggl( -\frac{2(k-k')\step}{\wc} \biggr) + c\tau(t_{k'}) + \frac{c\step^2}{\exp\biggl( \frac{2\step}{\wc} \biggr)-1}\\
        \end{aligned}
    \end{equation}
    Noting that $\frac{\step}{\exp( \frac{2\step}{\wc} )-1}$ is bounded for $h\rightarrow 0$ we may conclude that, in order to obtain $\KL(\hat{\mu}_{k+1}|\pi)$ we have to proceed as follows (ignoring constants): First pick $h = \Oc(\epsilon)$. Then pick $T_1>0$ sufficiently large such that $\tau(T_1) = \Oc(\epsilon)$ and define $k' = \lfloor T_1/\epsilon \rfloor $. Finally, choose $k = \Oc(T_1\epsilon^{-1} + \log(\epsilon^{-1}h^{-1})$.
\end{proof}

\subsubsection{Helper results}

\begin{lemma}\label{lem_second moments of discretization remain bounded}
    Assume the step sizes $\step_k$ satisfy $\step_k<\dissi_{\tau(t_k)}/\lip_{\tau(t_k)}^2$ and 
    \begin{equation}\label{eq step size condition}
        \limsup_{k\rightarrow\infty}\sum_{i=0}^{k-1}\step_i\prod_{j=i+1}^{k-1}\left(1 - 2 \step_j \dissi_{\tau(t_j)} + 2\lip_{\tau(t_j)}^2\step_j^2\right)<\infty.
    \end{equation}
    Then, if $\int |x|^2\hat{q}_0(x)\dd x<\infty$, the second moments of the discrete scheme \cref{eq:discretization} are bounded. That is,
    \begin{equation}
        \sup_{k\in\N} \int |x|^2 \hat{q}_k(x)\dd x < \infty.
    \end{equation}
\end{lemma}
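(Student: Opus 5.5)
We derive a one-step recursion for the second moment $m_k \coloneqq \E|X_k|^2$ of the Euler--Maruyama iterates \cref{eq:discretization} and then unroll it; no Fokker--Planck or entropy arguments are needed. Write $\nabla U_k \coloneqq \nabla U_{\tau(t_k)}$. Since $Z_k$ is independent of $X_k$ with mean zero and covariance $\id$, expanding the square gives
\begin{equation*}
    m_{k+1} = \E|X_k|^2 - 2\step_k\E\langle X_k,\nabla U_k(X_k)\rangle + \step_k^2\E|\nabla U_k(X_k)|^2 + 2\step_k d .
\end{equation*}

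Next we bound the middle and last terms. By dissipativity, $\langle x,\nabla U_k(x)\rangle \ge \dissi_{\tau(t_k)}|x|^2 - \dissii_{\tau(t_k)}$. For the gradient we use Lipschitz continuity around the origin rather than around a minimizer: $|\nabla U_k(x)| \le \lip_{\tau(t_k)}|x| + |\nabla U_k(0)|$, so $|\nabla U_k(x)|^2 \le 2\lip_{\tau(t_k)}^2|x|^2 + 2|\nabla U_k(0)|^2$. We need $\sup_\tau|\nabla U_\tau(0)| < \infty$. To get it, take a minimizer $x^*_\tau$ of $U_\tau$; dissipativity at $x^*_\tau$ gives $|x^*_\tau|^2 \le \dissii/\dissi$, as in the proof of \cref{lemma:discretization descent}. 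Hence $|\nabla U_\tau(0)| \le \lip\,|x^*_\tau| \le \lip\sqrt{\dissii/\dissi}$ uniformly, by \cref{Ass_drift}. Combining,
\begin{equation*}
    m_{k+1} \le \bigl(1 - 2\step_k\dissi_{\tau(t_k)} + 2\lip_{\tau(t_k)}^2\step_k^2\bigr) m_k + C\step_k,
\end{equation*}
where $C = 2\dissii + 2d + 2\lip^2\dissii/\dissi\cdot\sup_k\step_k$. The supremum $\sup_k\step_k$ is finite because $\step_k < \dissi_{\tau(t_k)}/\lip^2_{\tau(t_k)}$ and these ratios are uniformly bounded.

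Set $\rho_j \coloneqq 1 - 2\step_j\dissi_{\tau(t_j)} + 2\lip_{\tau(t_j)}^2\step_j^2$. The step-size restriction $\step_j < \dissi_{\tau(t_j)}/\lip_{\tau(t_j)}^2$ gives $\rho_j < 1$. It also gives $\rho_j \ge 0$: as a quadratic in $\step_j$, $\rho_j$ has minimum value $1 - \dissi^2_{\tau(t_j)}/(2\lip^2_{\tau(t_j)}) \ge 1/2$, since $\dissi_\tau \le \lip_\tau$. (This follows from $\dissi_\tau|x|^2 \le \langle x,\nabla U_\tau(x)\rangle \le \lip_\tau|x|^2 + |x|\,|\nabla U_\tau(0)|$ for large $|x|$.) Nonnegativity of $\rho_j$ is what makes it legitimate to multiply the inequalities through when unrolling. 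Unrolling the recursion yields
\begin{equation*}
    m_k \le m_0\prod_{j=0}^{k-1}\rho_j + C\sum_{i=0}^{k-1}\step_i\prod_{j=i+1}^{k-1}\rho_j .
\end{equation*}
The first term is at most $m_0$, which is finite by assumption. The second is bounded uniformly in $k$ precisely by \cref{eq step size condition}: the $\limsup$ is finite, and each finite $k$ contributes a finite value, so the supremum over all $k$ is finite.

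The only delicate point is the uniform control of the constant term, i.e.\ bounding $\E|\nabla U_k(X_k)|^2$ by $m_k$ with constants independent of $k$. The minimizer bound above handles this using only the uniform constants in \cref{Ass_drift}. The remaining steps are routine.
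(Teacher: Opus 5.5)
Your proof is correct and follows essentially the same route as the paper's: both derive the one-step second-moment recursion from dissipativity and from Lipschitz continuity combined with the minimizer bound $|x^*_\tau|^2\le \dissii/\dissi$, then unroll it and control the sum with the step-size condition. Your explicit check that $\rho_j\ge 0$ (which is needed to unroll the inequality) and your factor $d$ in the noise term $2\step_k d$ are details the paper's write-up omits.
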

\begin{proof}
    The proof is similar to \citep{habring2025diffusionjournal,fruehwirth2024ergodicity}. Let us denote $x_\tau^*$ for the minimizer of $U_\tau$. Now let $(\step_k)_k$ be any sequence of step sizes such that $0\leq \step_k<\frac{\dissi_{\tau(t_k)}}{\lip_{\tau(t_k)}^2}$ for all $k$. We find
    \begin{equation}
        \begin{aligned}
            \E\left[|X_{k+1}|^2\right]
            &= \E\left[|X_{k}- \step_k \nabla U_{\tau(t_k)}(X_k) + \sqrt{2\step_k} Z_k |^2\right]\\
            &= \E\left[|X_{k}|^2 - 2 \step_k X_{k} \cdot \nabla U_{\tau(t_k)}(X_k) + \step_k^2 |\nabla U_{\tau(t_k)}(X_k)|^2\right] + 2\step_k\\
            &\leq  \E\left[|X_{k}|^2 - 2 \step_k \dissi_{\tau(t_k)} |X_{k}|^2 + \step_k^2 (\lip_{\tau(t_k)}|X_k-x^*_{\tau(t_k)}|)^2\right] + 2(1+\dissii_{\tau(t_k)})\step_k\\
            &\leq  \E\left[|X_{k}|^2\right]\left(1 - 2 \step_k \dissi_{\tau(t_k)} + 2\lip_{\tau(t_k)}^2\step_k^2\right) +2(1+\dissii_{\tau(t_k)})\step_k + 2\step_k^2\lip_{\tau(t_k)}^2|x_{\tau(t_k)}^*|^2.
        \end{aligned}
    \end{equation}
    Using also that by dissipativity $|x_{\tau(t_k)}^*|^2\leq \dissii/\dissi$ we find for any $\step_k<\dissi_{\tau(t_k)}/\lip_{\tau(t_k)}^2$
    \begin{equation}
        \begin{aligned}
            \E\left[|X_{k}|^2\right]
            &\leq \prod_{i=0}^{k-1}\left(1 - 2 \step_i \dissi_{\tau(t_i)} + 2\lip_{\tau(t_i)}^2\step_i^2\right) \E\left[|X_0|^2\right] + 2\left(1+2\dissii\right)\sum_{i=0}^{k-1}\step_i\prod_{j=i+1}^{k-1}\left(1 - 2 \step_j \dissi_{\tau(t_j)} + 2\lip_{\tau(t_j)}^2\step_j^2\right)\\
            &\leq \E\left[|X_0|^2\right] + 2\left(1+2\dissii\right)\sum_{i=0}^{k-1}\step_i\prod_{j=i+1}^{k-1}\left(1 - 2 \step_j \dissi_{\tau(t_j)} + 2\lip_{\tau(t_j)}^2\step_j^2\right)
        \end{aligned}
    \end{equation}
    concluding the proof.
\end{proof}
Since the step size condition~\eqref{eq step size condition} might seem pathological, let us investigate it in a little more detail.
First, if $\step_k = \step\in\R$ is constant for all $k$, we find
\begin{equation}
    \begin{aligned}
        \sum_{i=0}^{k-1}\step_i\prod_{j=i+1}^{k-1}\left(1 - 2 \step_j \dissi_{\tau(t_j)} + 2\lip_{\tau(t_j)}^2\step_j^2\right)
        &\leq \sum_{i=0}^{k-1}\step \left(1 - 2 \step \dissi + 2\lip^2\step^2\right)^{k-i-1}\\
        &\leq \step \frac{1}{2 \step (\dissi - \lip^2\step)} = \frac{1}{2 (\dissi - \lip^2\step)}<\infty.
    \end{aligned}
\end{equation}
In this case, we even find that the second moments are bounded uniformly with respect to $h$.
Note, however, that it is not advised to choose a constant step size. In particular, for many popular choices of the family $(p_\tau)_\tau$, for large $\tau$ the density is better behaved allowing for larger step sizes initially, leading to faster convergence. On the contrary, when the step size is chosen constant, it has to satisfy $h<\dissi/\lip^2$ with the worst case constants $\dissi, \lip$ (\cf~\cref{Ass_drift}). However, an analogous result holds if the step size is lower-bounded, that is, there exists $\step>0$ such that $\step_k\geq \step$ for all $k$. Moreover, we can derive the following general result.
\begin{lemma}
    Let $(h_k)_k$ be bounded and satisfy $\step_k<\dissi_{\tau(t_k)}/\lip_{\tau(t_k)}^2$. Then it holds
    \begin{equation}
        \limsup_{k\rightarrow\infty}\sum_{i=0}^{k-1}\step_i\prod_{j=i+1}^{k-1}\left(1 - 2 \step_j \dissi_{\tau(t_j)} + 2\lip_{\tau(t_j)}^2\step_j^2\right)<\infty.
    \end{equation}
\end{lemma}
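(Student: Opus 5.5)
The plan is to avoid estimating the sums and products directly and instead use a one-step recursion. Set
\begin{equation*}
    r_j \coloneqq 1 - 2\step_j\dissi_{\tau(t_j)} + 2\lip_{\tau(t_j)}^2\step_j^2,
    \qquad
    S_k \coloneqq \sum_{i=0}^{k-1}\step_i\prod_{j=i+1}^{k-1} r_j ,
\end{equation*}
with $S_0=0$ as the empty sum. Splitting off the last index gives the exact recursion
\begin{equation*}
    S_{k+1} = r_k S_k + \step_k .
\end{equation*}
This is the same recursion that produced the expression in the proof of \cref{lem_second moments of discretization remain bounded}, with the constant forcing term divided out. Once a contraction estimate of the form $0\le r_k\le 1-c\,\step_k$ with $c>0$ independent of $k$ is available, an induction closes the argument. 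Indeed, if $S_k\le 1/c$, then
\begin{equation*}
    S_{k+1}\le (1-c\step_k)\tfrac1c+\step_k=\tfrac1c .
\end{equation*}
Hence $\sup_k S_k\le 1/c$, which is even stronger than the claimed $\limsup$ bound.

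The key step is the contraction estimate. We rewrite
\begin{equation*}
    r_k = 1-2\step_k\bigl(\dissi_{\tau(t_k)}-\lip_{\tau(t_k)}^2\step_k\bigr).
\end{equation*}
The strict inequality $\step_k<\dissi_{\tau(t_k)}/\lip^2_{\tau(t_k)}$ only gives $r_k<1$. To obtain $r_k\le 1-c\step_k$ with $c$ uniform in $k$, a margin is needed. I would use the assumption in the form $\step_k\le\theta\,\dissi_{\tau(t_k)}/\lip^2_{\tau(t_k)}$ for some fixed $\theta\in(0,1)$, which is how the step sizes are used in practice. Combined with the uniform bound $\dissi_{\tau}\ge\dissi>0$ from \cref{Ass_drift}, this gives
\begin{equation*}
    r_k\le 1-2(1-\theta)\dissi\,\step_k ,
\end{equation*}
so $c=2(1-\theta)\dissi$ works. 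The boundedness of $(\step_k)_k$ in the hypothesis plays the same role: it is what allows choosing $\theta$ uniformly, for example when the bound $\dissi_\tau/\lip_\tau^2$ is approached only in the limit.

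I expect this margin issue to be the main obstacle. Without it, $r_k$ could tend to $1$ fast enough that $S_k$ grows like $\sum_i\step_i$. I would therefore state the margin explicitly at the start of the proof.

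We also need $r_k\ge 0$, so that multiplying the induction hypothesis by $r_k$ preserves the inequality. As a function of $\step_k$, the quadratic $r_k$ is minimized at $\step_k=\dissi_{\tau(t_k)}/(2\lip^2_{\tau(t_k)})$, with minimum value $1-\dissi^2_{\tau(t_k)}/(2\lip^2_{\tau(t_k)})$. To see that this is nonnegative, combine dissipativity with Lipschitz continuity: for large $|x|$,
\begin{equation*}
    \dissi_\tau|x|^2\le\langle x,\nabla U_\tau(x)\rangle\le \lip_\tau|x|^2+|x||\nabla U_\tau(0)| .
\end{equation*}
Dividing by $|x|^2$ and letting $|x|\to\infty$ gives $\dissi_\tau\le\lip_\tau$, so the minimum is at least $1/2$ and in particular $r_k\ge 1/2$. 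With both bounds on $r_k$ in hand, the induction above completes the proof.
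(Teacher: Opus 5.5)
Under the margin you add, your argument is correct and is essentially the paper's. Both proofs reduce to the affine recursion $S_{k+1}=r_kS_k+h_k$ and close it with a uniform bound on the fixed points $\hat s_k=h_k/(1-r_k)$. Your induction step is exactly the invariance $\phi_k(s)\le\max\{\hat s_k,s\}$ used in the paper, here with $\hat s_k\le 1/c$. The paper first replaces $r_j$ by $\exp(-2h_ja_{\tau(t_j)}+2L_{\tau(t_j)}^2h_j^2)$ via $\log x\le x-1$, which tacitly requires $r_j>0$. Your direct treatment is cleaner, and your check that $r_k\ge 1/2$ (from $a_\tau\le L_\tau$) supplies that missing verification.

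You are right that the margin is indispensable, and the paper's proof needs it just as much. Its claim that $h_k/(1-\exp(-2h_ka+2L^2h_k^2))$ is bounded whenever $h_k$ is bounded fails as $h_k\uparrow a/L^2$. In fact the statement as written is false. Take $U_\tau(x)=|x|^2/2$ for all $\tau$, so that $a_\tau=L_\tau=1$, and set $h_k=1-(k+2)^{-2}<1$. Then $r_k=1-2h_k(1-h_k)\ge 1-2(k+2)^{-2}>0$. Hence every partial product satisfies $\prod_{j=i+1}^{k-1}r_j\ge P\coloneqq\prod_{j\ge 0}(1-2(j+2)^{-2})>0$, and $S_k\ge \tfrac34 Pk\to\infty$.

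The steps in this example are bounded, so your remark that boundedness of $(h_k)_k$ ``plays the same role'' and lets you choose $\theta$ uniformly is wrong. Boundedness is automatic anyway, since $h_k<a_\tau/L_\tau^2\le 1/a_\tau\le 1/a$, and it supplies no margin. The condition $h_k\le\theta\,a_{\tau(t_k)}/L^2_{\tau(t_k)}$ must therefore be stated as a genuinely new hypothesis, not derived from the given ones. Alternatively you can assume merely $\inf_k\bigl(a_{\tau(t_k)}-L^2_{\tau(t_k)}h_k\bigr)>0$, which is all your contraction estimate uses. With that hypothesis added, your proof is complete.
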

\begin{proof}
    Using that for any $x>0$, $\log(x)\leq x-1$ we find
    \begin{equation}
        \begin{aligned}
            \sum_{i=0}^{k-1}\step_i\prod_{j=i+1}^{k-1}\left(1 - 2 \step_j \dissi_{\tau(t_j)} + 2\lip_{\tau(t_j)}^2\step_j^2\right)
            &=\sum_{i=0}^{k}\step_i\exp\biggl( \sum_{j=i+1}^{k}\log (1 - 2 \step_j \dissi_{\tau(t_j)} + 2\lip_{\tau(t_j)}^2\step_j^2)\biggr)\\
            &\leq \sum_{i=0}^{k}\step_i\exp\biggl( -\sum_{j=i+1}^{k} 2 \step_j \dissi_{\tau(t_j)} - 2\lip_{\tau(t_j)}^2\step_j^2\biggr)\\
            &\eqqcolon S_k.
        \end{aligned}
    \end{equation}
    It holds true that $S_{k+1} = \exp\biggl( -2 \step_{k+1} \dissi_{\tau(t_j)} + 2\lip_{\tau(t_j)}^2\step_{k+1}^2\biggr)S_k + \step_{k+1}$. Let us denote 
    \begin{equation}
        \phi_k(s) = \exp\bigl( -2 \step_{k} \dissi_{\tau(t_j)} + 2\lip_{\tau(t_j)}^2\step_{k}^2\bigr)s + \step_{k}
    \end{equation}
    so that $S_{k+1} = \phi_k(S_k)$. Each function $\phi_k$ is a contraction with unique fixed point
    \begin{equation}\label{eq:fixed point phi}
        \hat s_k = \frac{\step_{k}}{1-\exp\bigl( -2 \step_{k} \dissi_{\tau(t_j)} + 2\lip_{\tau(t_j)}^2\step_{k}^2\bigr)}\leq \frac{\step_{k}}{1-\exp\bigl( -2 \step_{k} \dissi + 2\lip^2\step_{k}^2\bigr)}. 
    \end{equation}
    It holds for any $k$ that if $s>\hat s_k$, then $\phi_k(\hat s_k)<s$ and if $s\leq \hat s_k$, then $\phi_k(s)\leq \hat s_k$. Therefore, we have $\phi_k(s)\leq \max\{\hat s_k, s\}$. 
    Since the right-hand side of \eqref{eq:fixed point phi} is bounded for $h_k$ bounded (note that it converges to $1/(2\dissi)$ as $h_k\rightarrow 0$), it follows that $\sup_k \hat s_k\leq M<\infty$ so that $\phi_k(s)\leq \max\{M, s\}$ implying that $S_k\leq \max\{S_1,M\}$ and concluding the proof.
\end{proof}

\subsection{Properties of the Convolutional Path}
\subsubsection{Proof of~\cref{lemma convolution path SDE}}\label{proof:lemma convolution path SDE}
\begin{proof}
    It is well-known that for $\alpha,\sigma: [0,\infty)\rightarrow\R$ the Ornstein-Uhlenbeck process
    \begin{equation}\label{eq OU}
        \dd X_t = \alpha_t X_t \dd t + \sigma_t \dd W_t
    \end{equation}
    admits the solution $X_t = \exp\bigl( \int_0^t \alpha_s \dd s \bigr)X_0 + \int_0^t \sigma_\tau \exp\bigl( \int_\tau^t \alpha_s \dd s \bigr) \dd W_\tau$.
    Indeed, for the solution of \cref{eq OU} by \ito's lemma it follows
    \begin{equation}
        \begin{aligned}
            \dd \biggl(X_t \exp\biggl( -\int_0^t \alpha_s \dd s \biggr)\biggr) = \left\{ -\alpha_t \exp\biggl( -\int_0^t \alpha_s \dd s \biggr) X_t  + \alpha_t \exp\biggl( -\int_0^t \alpha_s \dd s \biggr) X_t\right\}\dd t + \exp\biggl( -\int_0^t \alpha_s \dd s \biggr) \sigma_t\dd W_t \\
            = \exp\biggl( -\int_0^t \alpha_s \dd s \biggr) \sigma_t\dd W_t
        \end{aligned}
    \end{equation}
    which, by integrating over $t$ yields the desired result. Moreover, since $\int_0^t \sigma_\tau \exp\bigl( \int_\tau^t \alpha_s \dd s \bigr) \dd W_\tau$ is a Gaussian with zero mean and variance $\int_0^t \sigma_\tau^2 \exp\bigl( 2\int_\tau^t \alpha_s \dd s \bigr) \dd \tau$ it follows that 
    \begin{equation}
        X_t|X_0\sim \Nc(\exp\bigl( \int_0^t \alpha_s \dd s \bigr)X_0, \int_0^t \sigma_\tau^2 \exp\bigl( 2\int_\tau^t \alpha_s \dd s \bigr) \dd \tau).
    \end{equation}
    By choosing $\alpha_t = -\frac{1}{2(1-t)}$, $\sigma_t^2 = \frac{1}{1-t}$ we recover \cref{eq convolution path}.
\end{proof}

\subsubsection{Proof of~\cref{lemma:convolutional path growth}}\label{proof:lemma:convolutional path growth}
\begin{proof}
    By~\cref{lemma convolution path SDE} $p_\tau$ satisfies the Fokker--Planck equation
    \begin{equation}
        \partial_\tau p_\tau(x) = \div \left( \frac{x}{2(1-\tau)}p_\tau(x)\right) + \frac{1}{2(1-\tau)}\Delta p_\tau (x).
    \end{equation}
    Note also, that in this case it is trivial to see that the \gls{pde} is satisfied in the classical sense as $p_\tau$ can be written as a convolution between the initial distribution and a Gaussian which is sufficiently smooth. Since, moreover, it always holds true that $\Delta\log p = \Delta p/p - |\nabla\log p|^2$ we find
    \begin{equation}
        \begin{aligned}
            \partial_\tau \log p_\tau(x) = \frac{\partial_\tau p_\tau}{p_\tau} 
            &=\frac{\div \left( \frac{x}{2(1-\tau)}p_\tau(x)\right) + \frac{1}{2(1-\tau)}\Delta p_\tau (x)}{p_\tau(x)}\\
            &= \frac{\div \left( \frac{x}{2(1-\tau)}p_\tau(x)\right) + \frac{1}{2(1-\tau)}( p_\tau(x)\Delta \log p_\tau(x) + p_\tau(x)|\nabla\log p_\tau(x)|^2)}{p_\tau(x)}\\
            &= \frac{d}{2(1-\tau)} + \frac{x\cdot \nabla\log p_\tau(x)}{2(1-\tau)} + \frac{1}{2(1-\tau)}(\Delta \log p_\tau(x) +|\nabla\log p_\tau(x)|^2).
        \end{aligned}
    \end{equation}
    As shown in~\citep[Lemma 3.2]{cordero2025non}, the Eigenvalues of $\nabla^2 \log p_\tau(x)$ are bounded uniformly in $\tau$ and $x$ and, thus, the result follows.
\end{proof}

\subsection{About the Log-Sobolev Inequality}

\subsubsection{Explicit derivation of $\Clsi(\tau)$}
\begin{lemma}\label{lemma:dissip_implies_LSI}
    Let $\nu(\d x) = \exp{(-U(x))}\d x$ with $U$ twice continuously differentiable, $\lip$-smooth and dissipative with parameters $\dissi, \dissii,\dissiii$. Then $\nu$ satisfies \gls{lsi} with constant $\Clsi$ which can explicitly be estimated from $\dissi, \dissii,\dissiii,\lip$ and the dimension $d$ according to the proof below.
\end{lemma}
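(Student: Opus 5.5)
The plan is the classical route: a Lyapunov-function argument gives a Poincaré inequality, and a local-to-global argument upgrades it to the log-Sobolev inequality. Concretely, I would follow the approach of Bakry, Barthe, Cattiaux and Guillin and of Cattiaux, Guillin and Wu, which yields explicit constants. The generator of the Langevin diffusion for $\nu$ is $\Lc f = \Delta f - \langle \nabla U, \nabla f\rangle$.

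\emph{Step 1 (Lyapunov function).} Take $W(x) = \exp(\gamma |x|^2)$ with $\gamma>0$ small, say $\gamma \le \dissi/4$. A direct computation gives
\[
\Lc W / W = 2\gamma d + 4\gamma^2|x|^2 - 2\gamma\langle x, \nabla U(x)\rangle .
\]
By dissipativity this is at most $2\gamma d + 2\gamma\dissii + (4\gamma^2 - 2\gamma \dissi)|x|^2$. Hence there are explicit $\theta = \gamma\dissi >0$, $b>0$ and a radius $r$, depending on $d,\dissi,\dissii,\dissiii$, such that
\[
\Lc W \le -\theta |x|^2 W + b\,\1_{B_r(0)}.
\]
This quadratic-in-$|x|$ drift is the form needed for the LSI criterion in Cattiaux--Guillin--Wu, which asks for $\Lc W \le -(\theta|x|^2 + c)W + b\1_{B_r}$.

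\emph{Step 2 (local inequalities on $B_r(0)$).} On the ball, $U$ is $\lip$-Lipschitz-smooth. Its oscillation is therefore bounded by $\mathrm{osc}_{B_r} U \le 2r|\nabla U(0)| + 2\lip r^2$. I would bound $|\nabla U(0)|$ via the minimizer $x^*$, using $|x^*|^2 \le \dissii/\dissi$ and Lipschitz continuity of the gradient. By Holley--Stroock perturbation of the uniform measure on the ball, whose Poincaré and log-Sobolev constants are of order $r^2$, the restriction of $\nu$ to $B_r$ satisfies Poincaré and LSI with constants of order $r^2 e^{\mathrm{osc}_{B_r}U}$.

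\emph{Step 3 (globalization).} First, the Lyapunov condition combined with the local Poincaré inequality gives a global Poincaré inequality with explicit constant. This follows from the standard argument: integrate $-\Lc W/W$ against $f^2\nu$ and integrate by parts, so that $\int f^2(-\Lc W/W)\,\d\nu \le \int|\nabla f|^2 \d\nu$. Second, the Cattiaux--Guillin--Wu theorem turns the $|x|^2$-weighted Lyapunov condition, together with the lower bound $\nabla^2 U \ge -\lip I_d$, into a defective LSI. This is where twice continuous differentiability is used: it supplies a Bakry--Émery-type curvature lower bound $-\lip$. Third, Rothaus' lemma combined with the Poincaré inequality tightens the defective LSI into a genuine one, yielding $\Clsi$ as an explicit function of $\dissi,\dissii,\dissiii,\lip,d$.

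\emph{Main obstacle.} I expect the hardest part to be the defective LSI in Step 3. It requires controlling $\int f^2|x|^2\,\d\nu$ by the Dirichlet energy plus $\int f^2\d\nu$, which comes from the Lyapunov computation. It also requires a Wang-type Harnack or HWI argument that uses the curvature bound $-\lip$ to trade this weighted moment bound for an entropy bound. Rather than reprove that step, I would cite it, and only track the constants through the explicit choices of $\gamma$, $r$, $b$ made above.
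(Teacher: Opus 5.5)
Your proposal is correct and is essentially the paper's proof: the paper cites Raginsky et al.\ (Propositions 13 and 15), which packages exactly your Step 3 (Lyapunov-based Poincar\'e inequality, the Cattiaux--Guillin--Wu defective LSI under $\nabla^2 U \ge -\lip I_d$, and Rothaus' lemma, giving $\Clsi = C_1 + (C_2+2)C_P$); it then verifies the drift conditions with $W(x)=\exp(\dissi|x|^2/4)$ and bounds the oscillation on the ball through the minimizer, $|x^*|^2 \le \dissii/\dissi$, as you do. The differences are minor: the paper uses a separate $V(x) = 1+|x|^2/2$ for the Poincar\'e step (your single $W$ also works), no local LSI on the ball is needed, and the cited defective-LSI constant involves $\int |x|^2\,\d\nu$, which the paper bounds via \cref{lemma:bdd_2_moment}.
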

\begin{proof}
    As shown in~\citep[Propositions 13, 15]{raginsky2017non} (\cf also~\citep{cattiaux2010note,bakry2008simple}), if $U$ is twice continuously differentiable and $L$-smooth and, in addition, there exist $C^2$ functions $V,W:\R^d\rightarrow[1,\infty)$ such that with the operator $\Lc f = \Delta f - \langle\nabla f,\nabla U\rangle$, $U$ in addition satisfies the Lyapunov drift conditions\footnote{Note that, if $V$ is coercive, the second implies the first condition. However, choosing different functions for $V$ and $W$ might lead to different constants.}
    \begin{equation}
        \begin{cases}
            \Lc V(x)&\leq (-\lambda_0 + \kappa_0\1_{\overline B_\rho(0)}(x))V(x),\quad \text{and}\\
            \Lc W(x)&\leq (\kappa -\gamma|x|^2)W(x)
        \end{cases}
    \end{equation}
    for some $\lambda_0,\kappa_0,\rho,\kappa,\gamma>0$,
    then $\nu$ satisfies \gls{lsi} with $\Clsi = C_1 + (C_2 + 2)C_P$ where the constants are computed as
    \begin{equation}\label{eq:lsi_explicit}
        C_1 = \frac{2}{\gamma}\bigg(\frac{1}{\epsilon}+\frac{L}{2}\bigg) + \epsilon,\quad C_2 = \frac{2}{\gamma}\bigg( \frac{1}{\epsilon} + \frac{L}{2}\bigg)\bigg(\kappa + \gamma\int |x|^2\dd \nu(x)\bigg),\quad C_p = \frac{1}{\lambda_0}\bigg( 1+1 C\kappa_0\rho^2\mathrm{Osc}_\rho(U)\bigg)
    \end{equation}
    where $\epsilon>0$ can be chosen arbitrarily, $C$ is a universal constant and $\mathrm{Osc}_\rho(U) = \max_{|x|\leq \rho} U(x) - \min_{|x|\leq \rho} U(x)$. We will apply this result and bound all appearing constants in terms of $\lip,\dissi,\dissii,\dissiii$: First of all, we choose $V(x) = 1+|x|^2/2$ which yields
    \begin{equation}
        \begin{aligned}
            \Lc V(x)
            = d - \langle \nabla U(x), x\rangle 
            \leq d-(\dissi |x|^2 - \dissii\1_{B_\dissiii(0)})
            \leq& (1+|x|^2)\Bigl(-\dissi + \frac{\dissi+d+\dissii\1_{B_\dissiii(0)}}{(1+|x|^2)}\Bigr)\\
            \leq& V(x)\Bigl(-\frac{\dissi}{2} + \frac{\dissi+d+\dissii\1_{B_\dissiii(0)}}{(1+|x|^2)} - \frac{\dissi}{2}\Bigr)
        \end{aligned}
    \end{equation}
    which implies 
    \begin{equation}
        \begin{aligned}
            \Lc V(x)&\leq (-\lambda_0 + \kappa_0\1_{\overline B_\rho(0)}(x))V(x)
        \end{aligned}
    \end{equation}
    with $\lambda_0 = \dissi/2$, $\rho^2 = \frac{2(\dissi + \dissii + d)}{\dissi}-1$, and $\kappa_0 = \dissi/2+\dissii+d$.
    For the second drift condition we may choose $W(x) = \exp{(-\alpha |x|^2)}$. Then one can check that $\nabla W(x) = 2\alpha x \exp{(-\alpha |x|^2)}$ and $\Delta W(x) =  (2\alpha d + 4\alpha^2 |x|^2)\exp{(-\alpha |x|^2)}$ implying
    \begin{equation}
        \begin{aligned}
            \Lc W(x)
            =& W(x)\Bigl(2\alpha d + 4\alpha^2 |x|^2 - 2\alpha \langle x,\nabla U(x)\rangle\Bigr)\\
            \leq& W(x)\Bigl(2\alpha d + 4\alpha^2 |x|^2 - 2\alpha (\dissi |x|^2 - \dissii\1_{B_\dissiii(0)})\Bigr)\\
            \leq& W(x)\Bigl( - 2\alpha (\dissi-2\alpha) |x|^2 + 2\alpha d +  \dissii\1_{B_\dissiii(0)})\Bigr).
        \end{aligned}
    \end{equation}
    Choosing, \eg, $\alpha = \dissi/4$, we obtain the desired second drift condition with $\gamma = \dissi^2/4$, $\kappa = d\dissi/4 + \dissii$. What is left to proof is that $\mathrm{Osc}_\rho(U)$ and $\int |x|^2\dd \nu(x)$ are bounded. A bound of $\int |x|^2\dd \nu(x)$ in terms of $\lip,\dissi,\dissii,\dissiii$ is established in~\cref{lemma:bdd_2_moment}. Regarding $\mathrm{Osc}_\rho(U)$, let $x^*$ be a global minimizer of $U$ which exists by continuity and coercivity. It follows by dissipativity
    \[
        0 = \langle \nabla U(x^*),x^*\rangle\geq \dissi|x^*|^2 -\dissii\1_{B_\dissiii(0)}
    \]
    and, thus, $|x^*|^2\leq \dissii/\dissi$. By the fundamental theorem of calculus and the fact that $\nabla U(x^*)=0$ we find for any $x$
    \begin{equation}
        \begin{aligned}
            U(x)-U(x^*) 
            =& \int_0^1 \langle \nabla U(x^*+t(x-x^*)), x-x^*\rangle\dd t\\
            =& \int_0^1 \langle \nabla U(x^*+t(x-x^*)) - \nabla U(x^*), x-x^*\rangle\dd t\\
            \leq& \int_0^1 \lip t |x-x^*|^2\dd t \\
            \leq& \frac{\lip}{2} |x-x^*|^2
        \end{aligned}
    \end{equation}
    which implies
    \[
        \mathrm{Osc}_\rho(U)
        \leq \frac{\lip}{2} \max_{\substack{|x|\leq \rho \\ |x^*|^2\leq \dissii/\dissi}}|x-x^*|^2
        = \frac{\lip}{2} \Bigl|\rho + \sqrt{\frac{\dissii}{\dissi}}\Bigr|^2.
    \]
\end{proof}

\subsubsection{Log-Sobolev Inequalites for Non-smooth Test Functions}
\begin{lemma}\label{lemma:lsi_weak}
    Assume $\nu(\d x) = q(x)\d x$ with $q\in L^1(\R^d)$ and bounded from above. Then \gls{lsi} holds for every $f\in W^{2,1}_{\mathrm{loc}}(\R^d)\cap L^2(\R^d,\nu)$ where, in the case that $\nabla f\notin L^2(\R^d,\nu)$, we set $\int |\nabla f|^2\d \nu = \infty$.
\end{lemma}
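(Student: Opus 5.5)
We extend \eqref{eq:LSI} from $C^\infty_c(\R^d)$ to $f\in W^{2,1}_{\mathrm{loc}}(\R^d)\cap L^2(\R^d,\nu)$ by density plus lower semicontinuity of the entropy. If $\int|\nabla f|^2\d\nu=\infty$ there is nothing to prove, so we assume $\nabla f\in L^2(\nu)$. Two approximation layers are needed: a spatial cutoff to obtain compact support, and mollification to obtain smoothness. Throughout, the left-hand side $\mathrm{Ent}_\nu(f^2)=\int f^2\log(f^2/\overline{f^2})\d\nu$ is handled through the variational characterization
\begin{equation*}
\mathrm{Ent}_\nu(g)=\sup\Bigl\{\int g\,h\,\d\nu : \int e^{h}\d\nu\le 1\Bigr\},
\end{equation*}
valid for $g\ge0$. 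It shows that $\mathrm{Ent}_\nu(g)$ is lower semicontinuous under convergence of $g$ in $L^1(\nu)$, and also under a.e.\ convergence combined with Fatou's lemma applied to suitably truncated $h$. Hence it suffices to build approximants $f_n\in C^\infty_c$ with $f_n\to f$ in $L^2(\nu)$ and a.e., and $\int|\nabla f_n|^2\d\nu\to\int|\nabla f|^2\d\nu$.

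\emph{Cutoff step.} Take $\chi_R$ as in the proof of \cref{lemma_time_deriv_KL}: $0\le\chi_R\le1$, $\chi_R=1$ on $B_R(0)$, $\chi_R=0$ outside $B_{R+1}(0)$, and $\|\nabla\chi_R\|_\infty$ bounded uniformly in $R$. Set $f_R=\chi_R f$. Then $f_R\to f$ in $L^2(\nu)$ by dominated convergence. Moreover
\begin{equation*}
\nabla f_R=\chi_R\nabla f+f\nabla\chi_R,
\end{equation*}
and the second term tends to zero in $L^2(\nu)$ because it is dominated by $c|f|\1_{B_R^c}$ with $f\in L^2(\nu)$. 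So $\int|\nabla f_R|^2\d\nu\to\int|\nabla f|^2\d\nu$, and it suffices to treat compactly supported $f\in W^{1,1}$ (indeed $W^{2,1}$) with $\nabla f\in L^2(\nu)$.

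\emph{Mollification step.} For compactly supported $f$, set $f_\epsilon=f*\phi^\epsilon\in C^\infty_c$. We need $f_\epsilon\to f$ and $\nabla f_\epsilon\to\nabla f$ in $L^2(\nu)$. This is where the upper bound $q\le M$ enters: $L^2(\lambda)$-convergence on the fixed compact support transfers to $L^2(\nu)$-convergence, since $\int|g|^2\d\nu\le M\int|g|^2\d x$. The difficulty is that we only know $\nabla f\in L^2(\nu)$, not $\nabla f\in L^2(\lambda)$, because $q$ may vanish. I would first try to avoid this with a truncation in the values of $f$ and $\nabla f$. If that fails, I would use the $W^{2,1}_{\mathrm{loc}}$ hypothesis: it gives $\nabla f\in W^{1,1}_{\mathrm{loc}}\subset L^{d/(d-1)}_{\mathrm{loc}}$, and in low dimension or with further truncation this yields $\nabla f\in L^2_{\mathrm{loc}}(\lambda)$, after which mollification converges in $L^2(\lambda)$ and hence in $L^2(\nu)$. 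This is the main obstacle. A fallback is to restrict to the setting in which the lemma is actually used, namely $f=\sqrt{q_t/p_\tau}$. Here \eqref{eq:density_exponential} gives positive lower bounds on compacts, so $L^2(\nu)$ and $L^2(\lambda)$ norms are locally equivalent and the issue disappears.

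\emph{Passing to the limit.} Applying \eqref{eq:LSI} to $f_n$ gives $\mathrm{Ent}_\nu(f_n^2)\le C\int|\nabla f_n|^2\d\nu$. Since $f_n^2\to f^2$ in $L^1(\nu)$, we also have $\overline{f_n^2}\to\overline{f^2}$. Lower semicontinuity of the entropy then gives $\mathrm{Ent}_\nu(f^2)\le\liminf_n\mathrm{Ent}_\nu(f_n^2)\le C\int|\nabla f|^2\d\nu$, which completes the proof.
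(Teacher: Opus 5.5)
\textbf{Gap: the mollification step is never completed.} Your architecture matches the paper's: cut off with $\chi_R$, approximate by $C^\infty_c$ functions, and pass to the limit by lower semicontinuity of the entropy. The cutoff step and the final limit passage are fine. Using the variational formula for $\mathrm{Ent}_\nu$ is a tidy substitute for the paper's Fatou argument on $\int h^2\log h^2\,\d\nu$ combined with separate convergence of $\overline{h_n^2}$. Cutting off first also lets you take one diagonal limit instead of two.

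The problem is the step you yourself flag as the main obstacle. You never show $\nabla f_\epsilon\to\nabla f$ in $L^2(\nu)$. Under your reading you also silently assume $f\in L^2(\lambda)$ on the support, which is not automatic for $d\ge 5$. None of your proposed remedies closes this:
\begin{itemize}
\item $W^{1,1}_{\mathrm{loc}}\subset L^{d/(d-1)}_{\mathrm{loc}}$ gives $L^2_{\mathrm{loc}}$ only for $d\le 2$.
\item Truncating the values of $\nabla f$ does not produce the gradient of any test function.
\item Restricting to $f=\sqrt{q_t/p_\tau}$ proves a different statement.
\end{itemize}

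\textbf{The missing observation is notational.} You read $W^{2,1}$ in the standard $W^{k,p}$ convention. In this paper $W^{p,1}$ means $L^p$ with one weak derivative in $L^p$. You can see this in \cref{lem:appendix_weak_diff}, where $u\in W^{p,1}_{\mathrm{loc}}$ is approximated with $\partial_{x_i}u_n\to\partial_{x_i}u$ in $L^p$, and in the paper's proof of this lemma, which uses $h_n\to h$ in $L^2(\R^d)$.

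So the hypothesis already gives $f,\nabla f\in L^2_{\mathrm{loc}}(\lambda)$. Then $\chi_R f$ has one derivative in $L^2(\R^d)$, and its mollifications converge together with their gradients in $L^2(\lambda)$. The bound $\int|g|^2\,\d\nu\le\|q\|_\infty\int|g|^2\,\d x$ transfers this to $L^2(\nu)$, and possible zeros of $q$ play no role. That is exactly the paper's step, and with it your proof is complete.

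\textbf{If you keep the standard reading} (two derivatives in $L^1_{\mathrm{loc}}$), your value-truncation idea can be made rigorous, but it needs an actual estimate.
\begin{itemize}
\item Take smooth $\Theta$ with $|\Theta|\le k$ and $0\le\Theta'\le 1$, and $0\le\chi\in C^\infty_c$. Integrate $\chi\,\Theta'(f_\epsilon)|\nabla f_\epsilon|^2=\chi\nabla f_\epsilon\cdot\nabla\Theta(f_\epsilon)$ by parts, let $\epsilon\to0$ using Fatou, and let $\Theta'$ increase to $\1_{(-k,k)}$. This gives $\int\chi\1_{\{|f|<k\}}|\nabla f|^2\,\d x\le k\,\|\mathrm{div}(\chi\nabla f)\|_{L^1}$.
\item Hence $T_kf=\max(-k,\min(f,k))$ is bounded with gradient in $L^2_{\mathrm{loc}}(\lambda)$, and the argument above yields the LSI for $T_kf$.
\item Since $|\nabla T_kf|\le|\nabla f|$ and $(T_kf)^2\to f^2$ in $L^1(\nu)$, lower semicontinuity of the entropy lets you send $k\to\infty$.
\end{itemize}
Without one of these two ingredients the proof remains incomplete.
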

\begin{proof}
    Let for $R>0$, $\chi_R\in C^\infty_c(\R^d)$ such that $\chi_R(x)=1$ for $x\in B_R(0)$, $\supp(\chi_R)\subset B_{R+1}$ and $\nabla\chi_R$ bounded independently of $R$. Let $f\in W^{2,1}_{\mathrm{loc}}(\R^d)$ be arbitrary. We consider first \gls{lsi} for $h\coloneqq f\chi_R$ and apply dominated convergence afterward. Indeed, since $f\in W^{2,1}_{\mathrm{loc}}(\R^d)$, we have $h\in W^{2,1}(\R^d)$.
    Let $h_n\in C^\infty_c(\R^d)$ be an approximating sequence for $h$ in $W^{2,1}(\R^d)$. Moreover, by taking a subsequence we can ensure that $h_n$ also converges pointwise a.e. For each $h_n$ \gls{lsi} holds. We want to pass to the limit first with respect to $n$, then with respect to $R$.
    Using Fatou's lemma and the fact that $h^2\log(h^2)\geq \min_x x^2\log(x^2)>-\infty$ and constants are integrable with respect to the probability measure $\nu$ we find 
    \begin{equation}\label{eq:lsi_wp1}
        \int h^2\log h^2 \d \nu = \int \lim_n h_n^2\log h_n^2 \d \nu\leq \liminf_n \int h_n^2\log h_n^2 \d \nu
    \end{equation}
    for which we use the pointwise a.e. convergence of $h_n$.
    Since $h_n\rightarrow h$ in $L^2(\R^d)$ we have that $\lim_n \overline{h^2_n} = \overline{h^2}$. Indeed, as $\overline{h_n^2} = \|h_n\sqrt{q}\|_{L^2(\R^d)}^2$, it follows that $\overline{h^2_n} \rightarrow \overline{h^2}$ if $h_n\sqrt{q}\rightarrow h\sqrt{q}$ in $L^2(\R^d)$ which follows from boundedness of $q$ and $L^2(\R^d)$ convergence $h_n\rightarrow h$. Next, we distinguish two cases: First, if $\overline{h^2}=0$, then $h=0$ $\nu$-a.e. so that the entire left-hand side of \cref{eq:LSI} is zero and \gls{lsi} is satisfied for $h$. Therefore, let us assume $\overline{h^2}\neq0$. In this case with $L^2(\R^d)$ convergence of $h_n$ it follows that
    \begin{equation}\label{eq:lsi_wp2}
        \begin{aligned}
            &\bigg|\int h_n^2\log \overline{h_n^2} \d \nu - \int h^2\log \overline{h^2} \d \nu \bigg| \\
            &\quad\leq\int h_n^2\left|\log \overline{h_n^2} -\log \overline{h^2} \right|q\d\lambda + \int \left|h_n^2-h^2\right|\left|\log \overline{h^2} \right|q\d\lambda\\
            &\quad\leq\|h_n\|^2_{L^2(\R^d)}\|q\|_{L^\infty(\R^d)}\left|\log \overline{h_n^2} -\log \overline{h^2} \right|+ \|q\|_{L^\infty(\R^d)}\left|\log \overline{h^2} \right|\int \left|h_n-h\right|\left|h_n+h\right|\d\lambda\\
            &\quad\leq\|h_n\|^2_{L^2(\R^d)}\|q\|_{L^\infty(\R^d)}\left|\log \overline{h_n^2} -\log \overline{h^2} \right| + \|q\|_{L^\infty(\R^d)}\left|\log \overline{h^2} \right| \left\|h_n-h\right\|_{L^2(\R^d)}\left\|h_n+h\right\|_{L^2(\R^d)}\rightarrow 0
        \end{aligned}
    \end{equation}
    as $n\rightarrow\infty$.
    Combining \cref{eq:lsi_wp1} and \cref{eq:lsi_wp2} leads to 
    \begin{equation}
        \int h^2\log \frac{h^2}{\overline{h^2}} \d \nu \leq \liminf_n\int h_n^2\log \frac{h_n^2}{\overline{h_n^2}} \d \nu.
    \end{equation}
    Convergence of the right-hand side of \gls{lsi} follows directly by the approximation in $W^{2,1}(\R^d)$ together with boundedness of $q$. In total we obtain
    \begin{equation}
            \int h^2\log \frac{h^2}{\overline{h^2}} \d \nu \leq \liminf_n\int h_n^2\log \frac{h_n^2}{\overline{h_n^2}} \d \nu \leq \Clsi \liminf_n \int |\nabla h_n|^2 \d \nu = \Clsi \int |\nabla h|^2 \d \nu.
    \end{equation}
    Lastly, assume $\nabla f\in L^2(\R^d,\nu)$. Let us denote $h$ as $h_R$ to make the dependence on $R$ explicit.
    By the same argument as above we can assume without loss of generality $\overline{f^2},\overline{h_R^2}>0$. Again, by Fatou's lemma
    \begin{equation}\label{eq:lsi_wp3}
        \int f^2\log f^2\d \nu \leq \liminf_R\int h_R^2\log h_R^2\d \nu.
    \end{equation}
    Moreover, note that by choosing $\chi_R$ appropriately, for every $x$, $h_R(x)$ is monotonically increasing with respect to $R$, implying that also the sequence $\overline{h_R^2}$ is increasing and so is $h_R^2\log \overline{h_R^2}$. Assuming without loss of generality $R>R_\mathrm{min}$ for some $R_\mathrm{min}>0$ we also note that $h_R^2\log \overline{h_R^2}\geq h_{R_\mathrm{min}}^2 \log \overline{h_{R_\mathrm{min}}^2}$ the latter being an element of $L^1(\R^d,\nu)$. Then by monotone convergence we also have
    \begin{equation}\label{eq:lsi_wp4}
        \int f^2\log f^2\d \nu = \lim_R\int h_R^2\log \overline{h_R^2}\d \nu.
    \end{equation}
    Combining \cref{eq:lsi_wp3} and \cref{eq:lsi_wp4} we find 
    \begin{equation}
        \begin{aligned}
            \int f^2\log \frac{f^2}{\overline{f^2}} \d \nu &\leq \liminf_R\int h_R^2\log \frac{h_R^2}{\overline{h_R^2}} \d \nu \\
            &\leq \Clsi \liminf_R\int |\nabla h_R|^2 \d \nu \\
            &\leq \Clsi \liminf_R\int |\nabla f \chi_R + f\nabla\chi_R|^2 \d \nu \\
            &\leq \Clsi \int |\nabla f|^2 \d \nu 
        \end{aligned}
    \end{equation}
    where the last limit follows from dominated convergence.
\end{proof}

\begin{lemma}\label{lemma:lsi_KL}
    Assume $\nu$ satisfies \gls{lsi}, $\nu(\d x)=q(x)\d x$ with $q>0$ and $q$ is continuously differentiable. Let $\mu\ll\nu$ and assume the density of $\mu$ with respect to the Lebesgue measure\footnote{Since $\mu\ll\nu\ll\text{Leb}$, $\mu$ admits a Lebesgue density.} denoted as $h$ is continuous, $h>0$, and $h\in W^{2,1}(\R^d)$. Then it holds
    \begin{equation}
        \KL(\mu|\nu)\leq \frac{\Clsi}{4}\int \bigg|\nabla\log\frac{\d \mu}{\d \nu}\bigg|^2\d \mu
    \end{equation}
\end{lemma}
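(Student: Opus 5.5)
We apply \cref{lemma:lsi_weak} to the test function $f=\sqrt{\dd\mu/\dd\nu}=\sqrt{h/q}$, which is the standard route from the LSI to the entropy--Fisher-information inequality. First, $\overline{f^2}=\int \frac{h}{q}\,q\,\dd\lambda=1$. Then
\[
\int f^2\log f^2\,\dd\nu=\int \log\frac{h}{q}\,\dd\mu=\KL(\mu|\nu).
\]
On the right-hand side, wherever everything is smooth,
\[
\nabla f=\tfrac12 f\,\nabla\log\frac{h}{q},\qquad |\nabla f|^2\,q=\tfrac14\,h\,\Bigl|\nabla\log\frac hq\Bigr|^2 .
\]
Hence $\int|\nabla f|^2\dd\nu=\frac14\int|\nabla\log\frac{\dd\mu}{\dd\nu}|^2\dd\mu$. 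Inserting both computations into \eqref{eq:LSI} gives the claim with constant $\Clsi/4$. If the Fisher information is infinite, the statement is trivial, so we may assume it is finite.

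The real work is justifying that $f$ is an admissible test function, since \eqref{eq:LSI} is only posited for $C^\infty_c$ functions. This is where \cref{lemma:lsi_weak} enters. The condition $f\in L^2(\nu)$ holds because $\int f^2\dd\nu=1$. For local Sobolev regularity we use that $h,q$ are continuous and strictly positive, so on each ball $B_R(0)$ both are bounded above and below by positive constants. On such a set $s\mapsto\sqrt s$ is smooth and Lipschitz on the relevant range, and $q\in C^1$, so the chain rule for Sobolev functions (in the spirit of \cref{lem:appendix_weak_diff}) shows that $f$ inherits local weak differentiability from $h\in W^{2,1}$. The formula for $\nabla f$ above then holds a.e. If $\nabla f\notin L^2(\nu)$, \cref{lemma:lsi_weak} sets the right-hand side to $\infty$, consistent with the claim.

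The main obstacle I expect is a mismatch in regularity. \cref{lemma:lsi_weak} requires $W^{2,1}_{\mathrm{loc}}$, but $q$ is only $C^1$, so $f=\sqrt{h/q}$ need not have second weak derivatives. I would handle this by inspecting the proof of \cref{lemma:lsi_weak}: it only uses convergence of $h_n\to h$ in $L^2$ together with convergence of gradients in $L^2$ (against a bounded density). So $W^{1,2}_{\mathrm{loc}}$, obtained from local boundedness of the density and the chain rule, suffices, and approximation by mollification plus cutoff works there. Boundedness of the density $q$ on the support of the cutoff is all that is needed, and this is automatic locally because $q$ is continuous.
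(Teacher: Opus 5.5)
Your proposal is correct and is essentially the paper's proof. You apply \cref{lemma:lsi_weak} to $f=\sqrt{h/q}$, obtain its weak gradient from the chain rule of \cref{lem:appendix_weak_diff}, and identify the two sides of \eqref{eq:LSI} with $\KL(\mu|\nu)$ and $\frac14\int\bigl|\nabla\log\frac{\dd\mu}{\dd\nu}\bigr|^2\dd\mu$. The regularity mismatch you anticipate is only notational: as the proofs of \cref{lemma:lsi_weak,lem:appendix_weak_diff} show, $W^{p,1}$ there denotes the first-order Sobolev space with $L^p$-integrable function and gradient (so $W^{2,1}$ is the usual $W^{1,2}$), which is exactly the class your workaround arrives at. Your observation that only local boundedness of $q$ is needed when invoking \cref{lemma:lsi_weak} also covers a point the paper leaves implicit, since \cref{lemma:lsi_KL} does not assume $q$ is globally bounded.
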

\begin{proof}
    Consider $f=\left(\frac{h}{q}\right)^{\frac{1}{2}}\in L^2(\R^d,\nu)$. 
    By the assumptions on $q$ we have for any compact set $K\subset \R^d$
    \begin{equation}
        \begin{aligned}
            \int_K f^2\d x = \int_K f^2 q q^{-1}\d x\leq \|q^{-1}\|_{L^\infty(K)} \|f\|_{L^2(\R^d,\nu)}^2<\infty.
        \end{aligned}
    \end{equation}
    Moreover, $f$ admits a weak derivative, $\nabla f = \frac{1}{2}\left(\frac{q}{h}\right)^{1/2}\frac{q\nabla h - h\nabla q}{q^2}$. Indeed, by weak differentiability of $h$ and continuous differentiability and positivity of $q$ we obtain $\frac{h}{q} \in W^{2,1}_{\mathrm{loc}}(\R^d)$. Continuity and positivity of $\frac{h}{q}$ then imply that $f=\left(\frac{h}{q}\right)^{1/2}\in W^{2,1}_{\mathrm{loc}}(\R^d)$ according to \cref{lem:appendix_weak_diff}. Using \cref{lemma:lsi_weak} we find
    \begin{equation}\label{eq:KL_lsi}
        \begin{aligned}
            \int f^2\log(f^2)\d \nu - \int f^2 \log \overline{f^2}\d \nu \leq \Clsi \int \left|\nabla f\right|^2\d \nu(x).
        \end{aligned}
    \end{equation}
    For the right-hand side of the above equation we can compute
    \begin{equation}
            \int \left|\nabla f\right|^2\d \nu(x) = \frac{1}{4} \int \frac{q}{h}\left|\frac{q\nabla h - h\nabla q}{q^2}\right|^2 q\d x = \frac{1}{4} \int h\left|\frac{\nabla h}{h} - \frac{\nabla q}{q}\right|^2 \d x = \frac{1}{4} \int h\left|\nabla\log \frac{h}{q}\right|^2 \d x\\
    \end{equation}
    where for the last equality we used again \cref{lem:appendix_weak_diff}. For the left-hand side in \cref{eq:KL_lsi} we can compute
    \begin{equation}
            \int f^2\log(f^2)\d \nu - \int f^2 \log \overline{f^2}\d \nu = \int \frac{h}{q}\log\left(\frac{h}{q}\right) q\d \lambda
            - \int \frac{h}{q} q\d \lambda \underbrace{\log\int\frac{h}{q} q\d \lambda}_{=\log(1)=0}
            = \KL(\mu|\nu)
    \end{equation}
    concluding the proof.
\end{proof}


\subsection{Miscellaneous Results}

\begin{lemma}\label{lem:potential and partition are Lipschitz in time}
    Under \cref{Ass_drift} we have for some constants $c_1,c_2>0$, (i) $|U_{\tau_1}(x) - U_{\tau_2}(x)|\leq c_1|x|^2 |\tau_1-\tau_2|$ and (ii) $|Z_{\tau_1}-Z_{\tau_2}|\leq c_2|\tau_1-\tau_2|$.
\end{lemma}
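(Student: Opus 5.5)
The plan is to integrate the growth bound on $\partial_\tau U_\tau$ from \cref{Ass_drift} along $\tau$. For (ii), I would control the logarithmic derivative of the partition function through the uniform moment bounds of \cref{lemma:bdd_2_moment}.

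For (i), fix $x$ and apply the fundamental theorem of calculus (or the mean value theorem) in $\tau$:
\begin{equation*}
|U_{\tau_1}(x)-U_{\tau_2}(x)| = \Bigl|\int_{\tau_2}^{\tau_1}\partial_\tau U_\tau(x)\,\dd\tau\Bigr| \le c(|x|^2+1)|\tau_1-\tau_2|.
\end{equation*}
This gives (i) in the form $c_1(|x|^2+1)|\tau_1-\tau_2|$. I do not expect the pure $c_1|x|^2$ form to be reachable without extra normalization; for instance, $U_\tau + f(\tau)$ with any Lipschitz $f$ satisfies \cref{Ass_drift} but violates it at $x=0$. I would therefore prove and use the $|x|^2+1$ version. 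That version is exactly what the later uses need: after integrating against $\hat q_k$ or $q_t$, the $+1$ is absorbed into the constant.

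For (ii), write $Z_\tau=\int \exp(-U_\tau)\,\dd x$. The key step is to justify
\begin{equation*}
\partial_\tau Z_\tau = -\int \partial_\tau U_\tau\, e^{-U_\tau}\,\dd x.
\end{equation*}
For this I need an integrable dominating function, locally uniform in $\tau$. By (i), $e^{-U_\sigma(x)}\le e^{-U_\tau(x)}e^{c(|x|^2+1)|\sigma-\tau|}$. Uniform dissipativity, via the radial integral $U_\tau(x)-U_\tau(0)=\int_0^1\langle\nabla U_\tau(sx),x\rangle\,\dd s$, gives $U_\tau(x)\ge U_\tau(0)+\tfrac{\dissi}{2}|x|^2-C$ with $C$ depending only on $\dissii,\dissiii$. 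Hence for $|\sigma-\tau|<\dissi/(4c)$ the integrand $c(|x|^2+1)e^{-U_\sigma}$ is dominated by a Gaussian-type integrable function.

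Dividing by $Z_\tau$ then gives
\begin{equation*}
\Bigl|\tfrac{\dd}{\dd\tau}\log Z_\tau\Bigr| = \Bigl|\int \partial_\tau U_\tau\,\dd\pi_\tau\Bigr| \le c\Bigl(1+\int|x|^2\,\dd\pi_\tau\Bigr)\le c',
\end{equation*}
uniformly in $\tau$ by \cref{lemma:bdd_2_moment}. So $\log Z_\tau$ is Lipschitz on the (bounded) $\tau$-range, which makes $Z_\tau$ bounded above there. Consequently $|\partial_\tau Z_\tau|\le c' Z_\tau\le c_2$, and integrating in $\tau$ yields (ii).

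The main obstacle is the normalization issue. If $U_\tau=-\log p_\tau$ is taken literally, then $Z_\tau\equiv1$ and (ii) is trivial. If instead $U_\tau$ is only defined up to constants, boundedness of $Z_\tau$ has to come from the Lipschitz bound on $\log Z_\tau$ on the bounded interval of $\tau$ values, which is the argument given above.
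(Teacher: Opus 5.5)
Your proof of (i) follows the paper's: you integrate the growth bound in $\tau$. Your correction is right. The paper's computation silently turns $c(|x|^2+1)$ into $c|x|^2$, and the stated form fails at $x=0$ even with literal normalization. For instance, $p_\tau=\Nc(0,(1+\tau)\id)$ on $\tau\in[0,1]$ has $U_\tau(0)=\tfrac d2\log(2\pi(1+\tau))$. The $(|x|^2+1)$ version is what the later proofs actually use.

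For (ii) your route genuinely differs. The paper never differentiates $Z_\tau$. It applies $1-e^{-z}\le z$ for $z\ge 0$ pointwise, together with (i), to get
\[
|Z_{\tau_1}-Z_{\tau_2}|\le c\,|\tau_1-\tau_2|\int(|x|^2+1)\bigl(e^{-U_{\tau_1}(x)}+e^{-U_{\tau_2}(x)}\bigr)\,\dd x .
\]
It then invokes \cref{lemma:bdd_2_moment} plus a uniform bound on $Z_\tau$ that it attributes to \cref{Ass_drift} without proof.
\begin{itemize}
\item The paper's route needs no interchange of $\partial_\tau$ and $\int$, hence no integrable envelope.
\item Its bound on $Z_\tau$ does not follow from \cref{Ass_drift} when $U_\tau$ is fixed only up to $\tau$-dependent constants. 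For example, $U_\tau(x)=|x|^2/2-\tau$ on $[0,\infty)$ satisfies the assumption but has $Z_\tau=(2\pi)^{d/2}e^{\tau}$.
\item Your Lipschitz estimate on $\log Z_\tau$ supplies exactly this missing ingredient on a bounded $\tau$-range. Your argument is therefore more self-contained, at the price of a domination argument.
\end{itemize}
The paper's inequality could also be closed without differentiating. It yields $|Z_{\tau_1}-Z_{\tau_2}|\le c'|\tau_1-\tau_2|(Z_{\tau_1}+Z_{\tau_2})$, which gives ratio bounds for nearby $\tau$ that chain over a bounded interval.

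One step of your domination argument needs a small repair. Dissipativity alone does not give the quadratic lower bound through the radial integral. For $s|x|<\dissiii$ you only get $\langle\nabla U_\tau(sx),x\rangle\ge\dissi s|x|^2-\dissii/s$, and $\dissii/s$ is not integrable at $s=0$. The fix is to split at $s_0=\min\{1,\dissiii/|x|\}$:
\begin{itemize}
\item On $[0,s_0]$, use $|\langle\nabla U_\tau(sx),x\rangle|\le(|\nabla U_\tau(0)|+\lip\dissiii)|x|$. This integrates to at most $(|\nabla U_\tau(0)|+\lip\dissiii)\dissiii$.
\item On $[s_0,1]$ the indicator vanishes, and you get $\tfrac{\dissi}{2}|x|^2(1-s_0^2)$.
\end{itemize}
Together these give $U_\tau(x)\ge U_\tau(0)+\tfrac{\dissi}{2}|x|^2-C_\tau$, where $C_\tau$ also depends on $\lip$ and $|\nabla U_\tau(0)|$, not only on $\dissii,\dissiii$. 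Since your domination is local around a fixed $\tau$, a $\tau$-dependent $C_\tau$ suffices. The admissible radius $|\sigma-\tau|<\dissi/(4c)$ remains uniform in $\tau$.
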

\begin{proof}
    (i) The first assertion follows from the simple computation for $0<\tau_1<\tau_2$
        \begin{equation}
            \begin{aligned}
                |U_{\tau_1}(x) - U_{\tau_2}(x)| = \left| \int_{\tau_1}^{\tau_2} \partial_s U_s(x)\dd s\right| \leq \int_{\tau_1}^{\tau_2} c|x|^2 \dd s = c|x|^2 |\tau_1-\tau_2|.
            \end{aligned}
        \end{equation}
    (ii) Regarding the second assertion, using that for any $z\geq 0$, $|1-\exp{(-z)}| = 1-\exp{(-z)} \leq z = |z|$ and distinguishing between the cases $U_{\tau_1}(x)\geq U_{\tau_2}(x)$ and $U_{\tau_1}(x)< U_{\tau_2}(x)$
        \begin{equation}
            \begin{aligned}
                |Z_{\tau_1}-Z_{\tau_2}|
                =& \left|\int \exp\bigl( -U_{\tau_1}(x) \bigr) - \exp\bigl( -U_{\tau_2}(x) \bigr)\dd x\right| \\
                \leq& \int (\1_{\{U_{\tau_1}\geq U_{\tau_2}\}}(x)-\1_{\{U_{\tau_1}< U_{\tau_2}\}}(x))\bigg[\exp\bigl( -U_{\tau_2}(x) \bigr) - \exp\bigl( -U_{\tau_1}(x) \bigr)\bigg] \dd x\\
                =& \int \1_{\{U_{\tau_1}\geq U_{\tau_2}\}}(x)\bigg[1 - \exp\bigl( - (U_{\tau_1}(x)-U_{\tau_2}(x))\bigr) \bigg] \exp\bigl( -U_{\tau_2}(x) \bigr)\dd x\\
                &+ \int \1_{\{U_{\tau_1}<U_{\tau_2}\}}(x)\bigg[1 - \exp\bigl( - (U_{\tau_2}(x)-U_{\tau_1}(x))\bigr) \bigg] \exp\bigl( -U_{\tau_1}(x) \bigr)\dd x\\
                \leq& \int \1_{\{U_{\tau_1}\geq U_{\tau_2}\}}(x)|U_{\tau_2}(x)-U_{\tau_1}(x))| \exp\bigl( -U_{\tau_2}(x) \bigr)\dd x\\
                &+ \int \1_{\{U_{\tau_1}<U_{\tau_2}\}}(x)|U_{\tau_2}(x)-U_{\tau_1}(x))| \exp\bigl( -U_{\tau_1}(x) \bigr)\dd x\\
                \leq& c_1 |\tau_1-\tau_2| \int |x|^2 (Z_{\tau_1}p_{\tau_1}(x)+Z_{\tau_2}p_{\tau_2}(x))\dd x\\
                \leq& c_2 |\tau_1-\tau_2|
            \end{aligned}
        \end{equation}
        where the last inequality follows from the uniform boundedness of the second moments of $(\pi_\tau)_\tau$,~\cref{lemma:bdd_2_moment} and the uniform boundedness of the partition function also with respect to $\tau$ due to~\cref{Ass_drift}.
\end{proof}

\begin{lemma}[{Grönwall's inequality \citep[Section B.2]{evans2010partial}}]\label{lem:gronwall}
    Let $0\leq t_0\leq t_1$, $u,\phi,\psi:[t_0,t_1]\rightarrow\R$ such that $u$ is absolutely continuous and $\phi,\psi \in L^1([t_0,t_1])$. Moreover, assume
    \begin{equation}\label{eq:gronwall}
        u'(t)\leq \phi(t)u(t) + \psi(t),\quad t\in [t_0,t_1].
    \end{equation}
    Then it holds for all $t\in [t_0,t_1]$,
    \begin{equation}
        u(t) \leq \exp\biggl( \int_{t_0}^t \phi(s)\d s \biggr)u(t_0) + \int_{t_0}^t \exp\biggl( \int_{\tau}^t \phi(s)\d s \biggr)\psi(\tau)\d \tau.
    \end{equation}
\end{lemma}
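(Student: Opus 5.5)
The plan is to prove the differential form of the inequality and then integrate it by an integrating-factor argument. The differential form is
\[
  u'(t) \le \phi(t)u(t) + \psi(t),
\]
with $u$ absolutely continuous and $\phi,\psi\in L^1([t_0,t_1])$.

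First, define the integrating factor
\[
  \Phi(t) \coloneqq \exp\biggl(-\int_{t_0}^t \phi(s)\d s\biggr).
\]
Since $\phi\in L^1$, the map $t\mapsto\int_{t_0}^t\phi$ is absolutely continuous. The function $\Phi$ is then absolutely continuous as the composition of a locally Lipschitz function with an absolutely continuous function on a compact interval. It is strictly positive, and for a.e.\ $t$ it satisfies $\Phi'(t) = -\phi(t)\Phi(t)$.

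Next, set $v(t)\coloneqq \Phi(t)u(t)$. This is a product of two absolutely continuous functions on a compact interval, hence absolutely continuous. The product rule holds a.e., giving
\[
  v'(t) = \Phi(t)\bigl(u'(t)-\phi(t)u(t)\bigr) \le \Phi(t)\psi(t).
\]
Here I read the hypothesis \eqref{eq:gronwall} as holding a.e. This is the only meaningful reading, since $u'$ exists only a.e., and it is how the result is used in \cref{lem:KL between dynamic and moving target} and in the proof of \cref{lemma:discretization descent}. The right-hand side $\Phi\psi$ is integrable because $\Phi$ is bounded on $[t_0,t_1]$.

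By the fundamental theorem of calculus for absolutely continuous functions, $v(t)-v(t_0)=\int_{t_0}^t v'(\sigma)\d\sigma\le\int_{t_0}^t\Phi(\sigma)\psi(\sigma)\d\sigma$. Using $v(t_0)=u(t_0)$, dividing by $\Phi(t)>0$, and noting that $\Phi(\sigma)/\Phi(t)=\exp\bigl(\int_\sigma^t\phi(s)\d s\bigr)$, we obtain exactly the claimed bound.

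The only delicate point is the regularity bookkeeping. We need absolute continuity of $\Phi$ and of the product $\Phi u$, and the a.e.\ validity of the chain and product rules. These are standard: the product of AC functions on a compact interval is AC, and $\exp$ is Lipschitz on bounded sets. Beyond this, the argument is routine. We could also simply cite \citep[Section B.2]{evans2010partial}.
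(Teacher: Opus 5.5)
Your proposal is correct and follows the same integrating-factor argument as the paper: multiply $u$ by $\exp\bigl(-\int_{t_0}^t\phi(s)\,\mathrm{d}s\bigr)$, bound the derivative of the product by the weighted $\psi$, integrate, and divide back. Your extra bookkeeping fills in details the paper's proof leaves implicit, namely reading the hypothesis almost everywhere, the absolute continuity of $\Phi$ and $\Phi u$, and the a.e.\ chain and product rules.
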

\begin{proof}
    By \cref{eq:gronwall} we have
    \begin{equation}
        \frac{\d}{\d t}\left(\exp\biggl( -\int_{t_0}^t \phi(s)\d s \biggr) u(t)\right) = \exp\biggl( -\int_{t_0}^t \phi(s)\d s \biggr) (u'(t) - u(t)\phi(t))\leq \exp\biggl( -\int_{t_0}^t \phi(s)\d s \biggr)\psi(t).
    \end{equation}
    Thus,
    \begin{equation}
        \exp\biggl( -\int_{t_0}^t \phi(s)\d s \biggr) u(t)-u(t_0)\leq \int_{t_0}^t \exp\biggl( -\int_{t_0}^\tau \phi(s)\d s \biggr)\psi(\tau)\d \tau
    \end{equation}
    and, consequently, 
    \begin{equation}
        u(t) \leq \exp\biggl( \int_{t_0}^t \phi(s)\d s \biggr)u(t_0) + \int_{t_0}^t \exp\biggl( \int_{\tau}^t \phi(s)\d s \biggr)\psi(\tau)\d \tau
    \end{equation}
\end{proof}

\begin{lemma}\label{lem:appendix_weak_diff}
    Let $u\in W^{p,1}_{\mathrm{loc}}(\R^d)$, $p\in[1,\infty)$ such that $u>0$ and $u$ is continuous. If $f$ is continuously differentiable on $(0,\infty)$, then it holds in the weak sense $\partial_{x_i}f(u) = f'(u)\partial_{x_i}u$ and $f(u)\in W^{p,1}_{\mathrm{loc}}(\R^d)$. If in addition $f(u),f'(u)\partial_{x_i}u\in L^2(\R^d)$ then $f(u)\in W^{p,1}(\R^d)$.
\end{lemma}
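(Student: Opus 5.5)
The statement is a local chain rule in $W^{1,p}$ (the paper writes $W^{p,1}_{\mathrm{loc}}$ with the indices swapped, meaning one weak derivative in $L^p_{\mathrm{loc}}$). The plan is to reduce to the classical chain rule for Lipschitz/$C^1$ functions with bounded derivative by using continuity and positivity of $u$ to confine its values to a compact subinterval of $(0,\infty)$ locally.

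First, fix a bounded open ball $B\subset\R^d$ and consider its closure $\overline{B}$. Since $u$ is continuous and strictly positive, there are $0<a\le b<\infty$ with $u(\overline{B})\subset[a,b]$. Choose $\tilde f\in C^1(\R)$ with $\tilde f=f$ on $[a,b]$ and $\tilde f'$ bounded and uniformly continuous, for instance by extending $f|_{[a/2,2b]}$ with a cutoff. On $B$ we have $f(u)=\tilde f(u)$, so it suffices to prove $\partial_{x_i}\tilde f(u)=\tilde f'(u)\partial_{x_i}u$ weakly on $B$.

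Next, take the standard mollifications $u_\epsilon=u*\eta_\epsilon$. On a slightly smaller ball $B'$, these satisfy $u_\epsilon\to u$ uniformly (by continuity of $u$), $\nabla u_\epsilon\to\nabla u$ in $L^p(B')$, and, along a subsequence, pointwise a.e. For smooth $u_\epsilon$ the classical chain rule gives, for every $\psi\in C^\infty_c(B')$,
\[
\int \tilde f(u_\epsilon)\partial_{x_i}\psi\,\d x=-\int \tilde f'(u_\epsilon)\partial_{x_i}u_\epsilon\,\psi\,\d x .
\]
For the limit on the left, $\tilde f(u_\epsilon)\to\tilde f(u)$ uniformly since $\tilde f$ is Lipschitz. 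On the right, split
\[
\tilde f'(u_\epsilon)\partial_{x_i}u_\epsilon-\tilde f'(u)\partial_{x_i}u=\tilde f'(u_\epsilon)\bigl(\partial_{x_i}u_\epsilon-\partial_{x_i}u\bigr)+\bigl(\tilde f'(u_\epsilon)-\tilde f'(u)\bigr)\partial_{x_i}u .
\]
The first term goes to $0$ in $L^1(B')$ because $\tilde f'$ is bounded and $\nabla u_\epsilon\to\nabla u$ in $L^p$. The second goes to $0$ because $\tilde f'(u_\epsilon)\to\tilde f'(u)$ uniformly while $\partial_{x_i}u\in L^1(B')$. This identifies the weak derivative. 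Moreover, $f(u)$ is bounded on $B$ and $f'(u)\partial_{x_i}u\in L^p(B)$ because $f'(u)$ is bounded there, so $f(u)\in W^{p,1}_{\mathrm{loc}}$. Local weak derivatives on overlapping balls agree, so the identity holds on all of $\R^d$.

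For the global statement, assume $f(u)$ and $f'(u)\partial_{x_i}u$ lie in $L^2(\R^d)$. The weak derivative is already identified globally, so membership in the global Sobolev space follows directly from the definition (read with the integrability exponent $2$ that the hypothesis provides). This is the only place where the indexing convention needs care.

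The main obstacle is the passage to the limit on the right-hand side. I would handle it through the localization to $[a,b]$, which is exactly where positivity and continuity of $u$ are used: they make $\tilde f'$ bounded and uniformly continuous on the relevant range, so no growth condition on $f$ near $0$ or $\infty$ is needed.
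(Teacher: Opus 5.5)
Your proof is correct and follows essentially the same route as the paper's: mollify $u$, use continuity and positivity of $u$ to confine the relevant values to a compact interval $[a,b]\subset(0,\infty)$ on the support of the test function, apply the classical chain rule to the smooth approximants, and pass to the limit using boundedness of $f'$ on $[a,b]$ together with $L^p_{\mathrm{loc}}$ convergence of $\nabla u_\epsilon$. The differences are cosmetic. You extend $f$ to a $C^1$ function with bounded, uniformly continuous derivative, whereas the paper simply observes that the mollified functions also take values in $[a,b]$. You use uniform rather than pointwise a.e.\ convergence of $f'(u_\epsilon)$. Your reading of the global part with exponent $2$ matches the only case ($p=2$) in which the paper uses the lemma, and the paper itself dismisses that step as trivial.
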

\begin{proof}
    Let $(u_n)_n\subset C^\infty(\R^d)$, $u_n = u*\psi_n$ with $\psi$ a mollifier with $\supp(\psi)\subset B_1(0)$, $\psi_n = n^d\psi(x/n)$ so that $u_n\rightarrow u$ in $W^{p,1}(K)$ for any compact $K\subset \R^d$ and pointwise a.e. by continuity of $u$. Let $\phi\in C^\infty_c(\R^d)$ arbitrary. We have by integration by parts for all $n$
    \begin{equation}
        \int \partial_{x_i}\phi f(u_n)\d x = - \int \phi f'(u_n) \partial_{x_i} u_n\d x.
    \end{equation}
    Let $K=\supp (\phi)$. By definition of $u_n$ and continuity of $u$ we have that $u_n(K)\subset u(K+B_{\frac{1}{n}}(0))\subset u(K_\frac{1}{n})\subset u(K_1)$ where we denote $K_\frac{1}{n} = \overline{K+B_{\frac{1}{n}}(0)}$. By continuity $u(K_1)$ is a compact set and a subset of $(0,\infty)$. We can find $0<a<b<\infty$ such that $u(K_1)\subset[a,b]$. On the set $[a,b]$, $f$ is Lipschitz continuous and $f'$ is bounded. As a consequence of Lipschitz continuity $f(u_n)\rightarrow f(u)$ in $L^p$. We can conclude
    \begin{equation}
        \begin{aligned}
            \int \partial_{x_i}\phi f(u)\d x &= \int_K \partial_{x_i}\phi f(u)\d x\\
            &= \lim_{n\rightarrow\infty}\int_K \partial_{x_i}\phi f(u_n)\d x\\
            &= -\lim_{n\rightarrow\infty} \int_K \phi f'(u_n) \partial_{x_i} u_n\d x\\
            &= - \int_K \phi f'(u) \partial_{x_i} u\d x\\
            &= - \int \phi f'(u) \partial_{x_i} u\d x
        \end{aligned}
    \end{equation}
    where the second limit follows from boundedness of $f'(u_n)$, $L^p$ convergence of $\partial_{x_i}u_n$ and pointwise a.e. convergence of $f'(u_n)$. This concludes the weak differentiability. The fact that $f(u)\in W^{p,1}_{\mathrm{loc}}(\R^d)$ then is easily seen since $f(u)$, $f'(u)\partial_{x_i}u\in L^p(U)$ for any bounded set $U\subset \R^d$.
    The rest of the theorem is trivial.
\end{proof}

\begin{lemma}\label{lemma:regular diffusion}
    For some fixed $v\in \R^d$, the \gls{pde}
    \begin{equation}
        \begin{cases}
            \partial_t p = \div ( v\cdot p) + \Delta p,\quad &(x,t)\in \R^d\times (0,\infty)\\
            p(x,0) = p_0(x),\quad &x\in \R^d
        \end{cases}
    \end{equation}
    admits the unique solution $p(x,t) = (p_0*G_t)(x+tv)$ with $G_t(x) = \frac{1}{(4\pi t)^{d/2}}\exp\bigl( -\frac{1}{4t}|x|^2 \bigr)$.
\end{lemma}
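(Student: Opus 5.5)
Since \(v\) is constant in \(x\) and \(t\), I will remove the drift by passing to a moving frame and reduce everything to the heat equation. Set \(w(y,t) := p(y - tv, t)\), equivalently \(p(x,t) = w(x+tv,t)\). The chain rule gives
\[
\partial_t p(x,t) = \partial_t w(x+tv,t) + \langle v, \nabla w(x+tv,t)\rangle .
\]
Because \(\div(v\,p) = \langle v,\nabla p\rangle\) for constant \(v\), the PDE becomes \(\partial_t w = \Delta w\) with \(w(\cdot,0)=p_0\). So \(p\) solves the original problem if and only if \(w\) solves the heat equation with the same initial datum.

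\emph{Existence.} I will define \(w(y,t) = (p_0 * G_t)(y)\). Since \(p_0\) is a probability density, hence in \(L^1\), differentiation under the integral is justified by dominated convergence: \(G_t\) and all its derivatives are bounded and integrable, locally uniformly for \(t\) in compact subsets of \((0,\infty)\). Hence \(w\) is smooth on \(\R^d\times(0,\infty)\), and a direct computation gives \(\partial_t G_t = \Delta G_t\), so \(\partial_t w = \Delta w\). Moreover \(w(\cdot,t)\to p_0\) as \(t\to 0\) in \(L^1\), and pointwise at continuity points, since \(G_t\) is an approximate identity. Transforming back yields \(p(x,t) = (p_0*G_t)(x+tv)\), which satisfies the drift-diffusion PDE in the classical sense.

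\emph{Uniqueness.} The difference of two solutions, transformed to the moving frame, solves the heat equation with zero initial datum. I will invoke the standard uniqueness theorem for the Cauchy problem of the heat equation in the appropriate class. One option is the Tychonoff growth class \(|w(y,t)|\le Ae^{a|y|^2}\) (Evans, Section 2.3). Since we deal with densities, the more natural option is uniqueness among nonnegative solutions (Widder), or among solutions lying in \(L^1\) uniformly in \(t\), which can be proved by testing against a Gaussian weight.

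The main obstacle is this uniqueness step: the statement does not specify a solution class, and without growth restrictions uniqueness fails (Tychonoff's counterexample). I would state the result for solutions that are integrable, or of at most Gaussian growth, uniformly on compact time intervals. This class covers every use in the proof of \cref{lemma:discretization descent}, where only the existence part and the explicit Gaussian form of \(\hat q_{t|0}\) are actually needed.
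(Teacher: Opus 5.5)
Your proposal is correct and follows the paper's proof: the moving-frame substitution $w(y,t)=p(y-tv,t)$ turns the drift--diffusion equation into the heat equation, and the heat-equation solution $p_0*G_t$ is then shifted back to give $p(x,t)=(p_0*G_t)(x+tv)$. Your caveat is also right: uniqueness only holds within a growth or sign class (Tychonoff/Widder), whereas the paper's appeal to ``well-known'' heat-equation uniqueness leaves this class implicit.
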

\begin{proof}
    Consider the transformation $q(x,t)\coloneqq p(x-tv,t)$. Indeed, we find
    \begin{equation}
        \begin{aligned}
            \partial_t q(x,t) &= \partial_t p(x-tv,t) - \nabla p(x-tv,t) \cdot v \\
            &= \div ( v\cdot p) (x-tv,t) + \Delta p (x-tv,t)  - \nabla p(x-tv,t) \cdot v\\
            &= \Delta p (x-tv,t) = \Delta q(x,t).
        \end{aligned}
    \end{equation}
    For the latter the unique solution is well-known to be $q(x,t) = (p_0*G_t)(x)$ with $G_t(x) = \frac{1}{(4\pi t)^{d/2}}\exp\bigl( -\frac{1}{4t}|x|^2 \bigr)$.
\end{proof}

\end{document}